  \newcommand{\IQ}{\ensuremath\mathds{Q}}                        
  \newcommand{\IR}{\ensuremath\mathds{R}}                        
  \newcommand{\IP}{\ensuremath\mathds{P}}                        
\newcommand*{\setE}{\ensuremath{\mathcal{T}}}                    
\newcommand*{\Gammah}{\Gamma_h}                                  
\newcommand*{\Nloc}{{N_\mathrm{loc}}}                            
\renewcommand*{\vec}[1]{{\boldsymbol{#1}}}                       
\DeclareMathAlphabet{\mathbfsf}{\encodingdefault}{\sfdefault}{bx}{n}
\newcommand*{\vecc}[1]{\mathbfsf{#1}}                            
\newcommand*{\transpose}[1]{{#1}^\mathrm{T}}                     
\newcommand*{\normal}{\vec{n}}                                   
\newcommand*{\dd}{\mathrm{d}}                                    
\newcommand*{\grad}{\vec{\nabla}}                                
\renewcommand*{\div}{\vec{\nabla}\cdot}                          
\newcommand*{\laplace}{\upDelta}                                 
\newcommand*{\strain}{{\boldsymbol{\varepsilon}}}                
\newcommand*{\llbrace}{\lbrace\hspace*{-0.18em}\vert}
\newcommand*{\rrbrace}{\vert\hspace*{-0.18em}\rbrace}
\newcommand*{\avg}[1]{\llbrace{#1}\rrbrace}                      
\newcommand*{\jump}[1]{\left\llbracket{#1}\right\rrbracket}      
\newcommand*{\abs}[1]{\ensuremath{|#1|}}                         
\newcommand*{\norm}[2]{\|#1\|_{#2}}                              
\newcommand*{\on}[2]{\left.#1\right\vert_{#2}}                   
\newcommand\tthash{\text{\ttfamily\#}}                           
\newcommand*{\Rey}{\mathrm{Re}}                                  
  \newcolumntype{R}{>{\raggedleft\arraybackslash}X}
  \newcolumntype{L}{>{\raggedright\arraybackslash}X}
  \newcolumntype{C}{>{\centering\arraybackslash}X}
\newtheorem{lemma}{Lemma}
\newtheorem{theorem}{Theorem}
\newtheorem{remark}{Remark}
\journal{ }
\begin{document}

\begin{frontmatter}
\title{An optimization based limiter for enforcing positivity in a semi-implicit discontinuous Galerkin scheme for compressible Navier--Stokes equations
}
\author[label1]{Chen Liu}\ead{liu3373@purdue.edu}
\author[label1]{Gregery T. Buzzard}\ead{buzzard@purdue.edu}
\author[label1]{Xiangxiong Zhang\corref{cor1}}\ead{zhan1966@purdue.edu}
\address[label1]{Department of Mathematics, Purdue University, 150 North University Street, West Lafayette, Indiana 47907.} 

\begin{abstract}
We consider an optimization-based limiter for enforcing positivity of internal energy in a semi-implicit scheme for solving gas dynamics equations.
With Strang splitting, the  compressible Navier--Stokes system is split into the compressible Euler equations, which are solved by the positivity-preserving Runge--Kutta discontinuous Galerkin (DG) method, and the parabolic subproblem, which is solved by Crank--Nicolson in time with interior penalty DG method. 
Such a scheme is at most second order accurate in time,  high order accurate in space, conservative, and preserves positivity of density. 
To further enforce the positivity of internal energy, 
we impose an optimization-based limiter for the total energy variable to post-process DG polynomial cell averages.  
The optimization-based limiter can be efficiently implemented by the popular first order convex optimization algorithms such as the Douglas--Rachford splitting method by using nearly optimal algorithm parameters. 
Numerical tests suggest that the DG method with $\IQ^k$ basis and the optimization-based limiter is robust  for demanding   low-pressure problems such as high-speed flows.
\end{abstract}

\begin{keyword}
compressible Navier--Stokes \sep semi-implicit \sep discontinuous Galerkin \sep high order accuracy  \sep positivity-preserving \sep Douglas--Rachford splitting \sep optimization based limiter

\vspace{.5\baselineskip}
\MSC   65M12 \sep 65M60 \sep 65N30 \sep 90C25
\end{keyword}
\end{frontmatter}


\section{Introduction}

\subsection{Motivation and objective}
For studying viscous gas dynamics, the dimensionless  compressible Navier--Stokes (NS) equations  
without external forces in conservative form on a bounded spatial domain $\Omega\subset\IR^d$ over time interval $[0,T]$ are  
\begin{align}\label{eq:CNS:model_conv_form}
\partial_t{\vec{U}} + \div{\vec{F}^\mathrm{a}} = \div{\vec{F}^\mathrm{d}}, \quad \vec{F}^\mathrm{a} = \begin{pmatrix} 
\rho\vec{u} \\
\rho\vec{u}\otimes\vec{u} + p\vecc{I} \\
(E+p)\vec{u}
\end{pmatrix} 
\quad~\text{and}~\quad
\vec{F}^\mathrm{d} = \frac{1}{\Rey}\begin{pmatrix} 
\vec{0} \\
\vec{\tau} \\
\vec{u}\cdot\vec{\tau} - \vec{q}
\end{pmatrix},
\end{align}
where the conservative variables are density $\rho$, momentum $\vec{m}$, and total energy $E$, 
 $\Rey$ denotes the Reynolds number and $\vecc{I}\in\IR^{d\times d}$ denotes an identity matrix, $\vec{u}=\frac{\vec{m}}{\rho}$ is velocity and $p$ is pressure. 
With the Stokes hypothesis, the shear stress tensor is given by $\vec{\tau}(\vec{u}) = 2\strain{(\vec{u})} - \frac{2}{3}(\div{\vec{u}})\vecc{I}$, where $\strain{(\vec{u})}=\frac{1}{2}(\grad{\vec{u}}+\transpose{(\grad{\vec{u}})})$. 
The total energy can be expressed as $E = \rho e + \frac{\norm{\vec{m}}{}^2}{2\rho}$, where $e$ denotes the internal energy and  $\norm{\cdot}{}$ is the vector 2-norm. With Fourier's heat conduction law, the heat diffusion flux $\vec{q} = -\lambda\grad{e}$ with parameters $\lambda = \frac{\gamma}{\Pr}>0$, where the positive constant $\gamma$ is the ratio of specific heats and $\Pr$ denotes the Prandtl number. For air, we have $\gamma = 1.4$ and $\Pr = 0.72$.
For simplicity,   we only consider the ideal gas equation of state 
\begin{equation}
    \label{eos}p = (\gamma-1)\rho e.
\end{equation}
The system \eqref{eq:CNS:model_conv_form} can be written as
\begin{subequations}\label{eq:CNS:model}
\begin{align}
\partial_t{\rho} + \div{(\rho\vec{u})} = 0 && \text{in}~[0,T]\times\Omega,\label{eq:CNS:model_1}\\
\partial_t{(\rho\vec{u})} + \div{(\rho\vec{u}\otimes\vec{u})} + \grad{p} - {\textstyle\frac{1}{\Rey}}\div{\vec{\tau}(\vec{u})} = \vec{0} && \text{in}~[0,T]\times\Omega,\\
\partial_t{E} + \div{((E+p) \vec{u})} - {\textstyle\frac{\lambda}{\Rey}}\laplace{e} - {\textstyle\frac{1}{\Rey}}\div{(\vec{\tau}(\vec{u})\vec{u})} = 0 && \text{in}~[0,T]\times\Omega.\label{eq:CNS:model_3}
\end{align}
\end{subequations}

 When vacuums occur, the solutions of compressible NS equations may lose continuous dependency with respect to the initial data, see   \cite[Theorem~2]{hoff1991failure} and  \cite[Remark~3.3]{guermond2021second}. On the other hand, the density and internal energy of a physically meaningful solution in most applications should both be positive. For problems without any vaccum,
define the set of admissible states as
\begin{align*}
G = \{\vec{U} = \transpose{[\rho, \vec{m}, E]}\!:~ \rho>0,~ \rho e(\vec{U}) = E - \frac{\norm{\vec{m}}{}^2}{2\rho} > 0\}.
\end{align*}
The function $\rho e(\vec{U}) = E - \frac{\norm{\vec{m}}{}^2}{2\rho}$ is a concave function of $\vec{U}$, which implies the set $G$ is convex \cite{zhang2010positivity}.
For an initial condition $\vec{U}^0=\transpose{[\rho^0,\vec{m}^0,E^0]}\in G$,  a numerical solution preserving the positivity is preferred for the sake of not only physical meaningfulness but also numerical robustness. 
For the equation of state \eqref{eos}, negative internal energy means negative pressure, with which 
the linearized compressible Euler equation loses hyperbolicity and its initial value problem is ill-posed \cite{zhang2010positivity}. On the other hand, a conservative and positivity-preserving scheme in the sense of preserving the invariant domain $G$ is numerically robust \cite{grapsas2016unconditionally, zhang2017positivity,guermond2021second,fan2022positivity,liu2023positivity}.
\par

For solving a convection-diffusion system \eqref{eq:CNS:model}, fully explicit time stepping results in a time step constraint $\Delta{t} = \mathcal{O}(\Rey\Delta{x}^2)$, thus is suitable only for high Reynolds number flows in practice. In order to achieve larger time steps such as a hyperbolic CFL $\Delta{t} = \mathcal{O}(\Delta{x})$, a semi-implicit scheme can be used \cite{guermond2021second, liu2023positivity}.
\par

The objective of this paper is to construct a high order accurate in space, conservative, and positivity-preserving scheme for solving the compressible NS equations \eqref{eq:CNS:model}. In particular, we will use the Strang splitting approach in \cite{guermond2021second, liu2023positivity} with arbitrarily high order discontinuous Galerkin (DG)  method for spatial discretization, which gives a scheme of at most second order accuracy in time. 
In general, a scheme that is high order in both time and space is preferred. On the other hand,
for many fluid problems including gas dynamics problems, the solutions are often smoother   with respect to the time variable, thus the spatial resolution of a numerical scheme is often more crucial for capturing fine structures in solutions than its temporal  accuracy. Higher order spatial discretizations often produce better numerical solutions even if the time accuracy is only first order for various convection-diffusion problems \cite{shen2021discrete,hu2023positivity,liu2024structure,liu2023positivity}.
 
\subsection{Existing positivity-preserving schemes for compressible NS equations}

In the literature, there are many positivity-preserving schemes for compressible Euler equations, which have been well studied since 1990s. For compressible Navier--Stokes equations, most of the practical positivity-preserving schemes  were developed only in the past decade.  

Grapas et al. in \cite{grapsas2016unconditionally} constructed a fully implicit  pressure correction scheme on staggered grids, which is at most second order in space, conservative, and unconditionally positivity-preserving. Nonlinear systems must be solved for time marching. As a fully implicit scheme on a staggered grid, it seems difficult to extend it to a higher order accurate scheme.

Zhang in \cite{zhang2017positivity} proposed  a simple nonlinear diffusion numerical flux, with which arbitrarily high order Runge--Kutta DG schemes solving \eqref{eq:CNS:model} can be rendered positivity-preserving without losing conservation and accuracy by a simple positivity-preserving limiter in \cite{zhang2010positivity}. 
The advantages of such a fully explicit approach include easy extensions to general shear stress models and heat fluxes, and possible extensions to other types of schemes, such as high order finite volume schemes \cite{fan2021positivity} and the high order finite difference WENO (weighted essentially nonoscillatory) schemes \cite{fan2022positivity}.
However, like many fully explicit schemes for convection-diffusion systems \cite{ZLS2012maximum,chen2016third,srinivasan2018positivity,sun2018discontinuous}, the time step constraint is $\Delta t = \mathcal{O}(\Rey\,\Delta x^2)$.

Guermond et al. in \cite{guermond2021second} introduced a semi-implicit continuous finite element scheme via Strang splitting, which preserves positivity under standard hyperbolic CFL condition $\Delta t = \mathcal{O}(\Delta x)$.
By the same operator splitting approach, in \cite{liu2023positivity} we constructed a semi-implicit conservative DG scheme, with the continuous finite element method for  solving \eqref{eq:CNS:model}, and  the scheme with $\IQ^k$ ($k=1,2,3$) basis can be proven positivity-preserving with $\Delta t = \mathcal{O}(\Delta x)$. 

\par
The early pioneering work on DG methods for solving compressible NS equations was conducted by Bassi and Rebay \cite{bassi1997high,bassi2002numerical} as well as Baumann and Oden \cite{baumann1999discontinuous}.
Advantages of DG methods include high order accuracy, flexibility in handling complex meshes and hp-adaptivity, and highly parallelizable characteristics. See \cite{cockburn2012discontinuous,shu2014discontinuous,arnold2002unified} for an overview of DG methods. 
In this paper, we focus on constructing DG schemes within the Strang splitting approach, by which the compressible NS system \eqref{eq:CNS:model} is splitted into a hyperbolic subproblem ($\mathrm{H}$) and a parabolic subproblem ($\mathrm{P}$),  representing two asymptotic regimes:  the vanishing viscosity limit (the compressible Euler equations) and the dominance of diffusive terms: 
\begin{align}
(\mathrm{H})~\!
\begin{cases}
\partial_t{\rho} + \div{(\rho\vec{u})} = 0, \\
\partial_t(\rho\vec{\vec{u}}) + \div{(\rho\vec{u}\otimes\vec{u} + p\vecc{I})} = \vec{0}, \\
\partial_t{E} + \div{((E+p)\vec{u})} = 0,
\end{cases}
&&
(\mathrm{P})~\!
\begin{cases}
\partial_t{\rho} = 0, \\
\partial_t(\rho\vec{\vec{u}}) - {\textstyle\frac{1}{\Rey}}\div{\vec{\tau}(\vec{u})} = \vec{0}, \\
\partial_t{E} -{\textstyle\frac{\lambda}{\Rey}}\laplace{e} - {\textstyle\frac{1}{\Rey}}\div{(\vec{\tau}(\vec{u})\vec{u})} = 0.
\end{cases}
\label{strang-splitting}
\end{align}
The equation $\partial_t{\rho} = 0$ in the parabolic subproblem implies the variable $\rho$ in ($\mathrm{P}$) is time independent. Multiplying the second equation in $(\mathrm{P})$ by $\vec{u}$ and using the identity 
$\div{(\vec{\tau}(\vec{u})\vec{u})} = (\div{\vec{\tau}(\vec{u})})\cdot\vec{u} + \vec{\tau}(\vec{u}):\grad{\vec{u}}$, 
we obtain the following equivalent system in non-conservative form: 
\begin{subnumcases}{(\mathrm{P})~\label{eqn-P-nonconservative}}
\partial_t{\rho} = 0, \label{eqn-internal-rho}\\
\rho\partial_t\vec{\vec{u}} - {\textstyle\frac{1}{\Rey}}\div{\vec{\tau}(\vec{u})} = \vec{0}, \label{eqn-internal-u}\\
\rho\partial_t{e} -{\textstyle\frac{\lambda}{\Rey}}\laplace{e} = {\textstyle\frac{1}{\Rey}}\vec{\tau}(\vec{u}):\grad{\vec{u}}. \label{eqn-internal-e}
\end{subnumcases}
We use the positivity-preserving Runge--Kutta DG method \cite{zhang2010positivity} for subproblem ($\mathrm{H}$), i.e., 
the Zhang--Shu method for constructing positivity-preserving schemes \cite{zhang2010maximum,zhang2010positivity,zhang2011positivity,zhang2012maximum, zhang2012minimum} applied to solving compressible Euler equations, which is arbitrarily high order accurate, conservative, and positivity-preserving.
For the parabolic subproblem, many different types of DG methods have been developed for solving diffusion equations in literature, which include interior penalty DG \cite{girault2005discontinuous,liu2019interior,masri2022discontinuous,masri2023improved}, local DG \cite{cockburn1998local,castillo2000priori}, direct DG \cite{liu2010direct,zhang2012fourier,liu2015optimal}, hybridizable DG \cite{cockburn2009hybridizable,peraire2010hybridizable,nguyen2011implicit}, compact DG \cite{peraire2008compact,uranga2009implicit}, and so on.
In this paper, we utilize the interior penalty DG method to discretize subproblem ($\mathrm{P}$).
The first challenge of using DG methods for subproblem ($\mathrm{P}$) is how to ensure conservation of conserved variables. In \cite{liu2023positivity}, we have proven that 
conservation can be preserved via choosing appropriate interior penalty DG forms of $\div{\vec{\tau}(\vec{u})}$ and $\vec{\tau}(\vec{u}):\grad{\vec{u}}$. The next major challenge is how to ensure positivity when discretizing \eqref{eqn-internal-e}.  
It is very difficult to prove any positivity-preserving  result for arbitrarily high order schemes  solving \eqref{eqn-internal-e} for implicit time stepping, even if the temporal accuracy is only first order. 

Consider a heat equation $\partial_t{e} - \laplace{e} = 0$ as a simplification of \eqref{eqn-internal-e}.
When using backward Euler time discretization, a systematic approach to obtaining a sufficient condition for the discrete maximum principle or positivity is to show the monotonicity of the linear system matrix. A matrix is called {\it monotone} if all entries of its inverse are nonnegative. 
The monotonicity of $\IQ^1$ interior penalty DG on multi-dimensional structured meshes has been established in \cite{liu2023positivity}, also see \cite{horvath2013discrete, li2023monotone} for related results; and the monotonicity of continuous finite element method with $\IQ^2$ and $\IQ^3$ elements has been proven in \cite{li2020monotonicity,cross2023monotonicityQ2,cross2023monotonicityQ3}.
However, for arbitrary high order schemes on unstructured meshes, the monotonicity does not hold \cite{hohn1981some}.
Furthermore, for higher order implicit time marching strategy, such as the Crank--Nicolson method, the monotonicity of the linear system matrix is not enough to ensure positivity using a  time step like $\mathcal{O}(\Delta x)$.
The Crank--Nicolson method with a monotone spatial discretization preserves positivity only if 
 the time step is as small as $\mathcal{O}(\Delta x^2)$, see  \cite[Appendix B]{MR4710829} and \cite[Section 5.3]{guermond2021second}. 

\subsection{A constraint optimization approach for enforcing positivity and global conservation}

To preserve positivity of internal energy, we will introduce a constraint optimization  postprocessing approach. 
For enforcing bounds or positivity in numerical schemes solving PDEs, various optimization based approaches have been considered in the literature. We list a few such methods.
Guba et al. in \cite{guba2014optimization} introduced a bound-preserving limiter for spectral element method, implemented by standard quadratic programming solvers.
van der Vegt et al. in \cite{van2019positivity} considered a positivity-preserving limiter for DG scheme with implicit time integration and formulated the positivity constraints in the KKT system, implemented by an active set semismooth Newton method.
Cheng and Shen in \cite{cheng2022new} introduced a Lagrange multiplier approach to preserve bounds for semilinear and quasi-linear parabolic equations, which provides a new interpretation for the cut-off method and achieves the preservation of mass by solving a nonlinear algebraic equation for the additional space independent Lagrange multiplier.
Ruppenthal and Kuzmin in \cite{ruppenthal2023optimal} utilized optimization-based flux correction to ensure the positivity of finite element discretization of conservation laws. The primal-dual Newton method was employed to calculate the optimal flux potentials.  
\par

Next, we describe the main idea of our approach. 
Let $\overline{\vec{U}_i^\mathrm{P}} = \transpose{[\overline{\rho_i^\mathrm{P}},\overline{\vec{m}_i^\mathrm{P}}, \overline{E_i^\mathrm{P}}]}$ be a vector denoting the cell average of the DG polynomial ${\vec{U}_h^\mathrm{P}}(\vec{x})=\transpose{[\rho^\mathrm{P}_h(\vec{x}), \vec{m}^\mathrm{P}_h(\vec{x}),E^\mathrm{P}_h(\vec{x})]}$ on the $i$-th cell $K_i$ after solving subproblem ($\mathrm{P}$). The density cell averages are positive, which can be ensured if using a positivity-preserving scheme for subproblem ($\mathrm{H}$). The main challenge here is that  in general $\overline{\vec{U}_i^\mathrm{P}}$ may not be in the convex invariant domain set $G$.
We emphasize that 
the Zhang--Shu limiter \cite{zhang2010positivity} can be used only if $\overline{\vec{U}_i^\mathrm{P}}\in G$, which can be proven for one time step or time stage  for fully explicit finite volume and DG schemes with a positivity-preserving flux \cite{zhang2010positivity, zhang2017positivity}, or very special semi-implicit schemes like \cite{liu2023positivity}, thus these schemes can be rendered positivity-preserving by using the  Zhang--Shu limiter \cite{zhang2010positivity} in each time step or time stage.

With a prescribed small positive number $\epsilon$, which serves as the desired lower bound for density and internal energy, the numerical admissible state set $G^\epsilon$ is defined as follows.
\begin{align*}
G^\epsilon = \{\vec{U}=\transpose{[\rho,\vec{m},E]}\!:~\rho\geq\epsilon,~ \rho e(\vec{U}) = E - \frac{\|\vec{m}\|^2}{2\rho} \geq \epsilon\}.
\end{align*}

Define $\overline{E_h^\mathrm{P}}=\transpose{[\overline{E_1^\mathrm{P}}, \overline{E_2^\mathrm{P}}, \cdots, \overline{E_N^\mathrm{P}}]}$ as the vector of all cell averages for the total energy. We propose to modify the total energy only.  And we would like to modify it to another vector 
$\overline E_h=\transpose{[\overline E_1, \overline E_2, \cdots, \overline E_N]}$ such that it minimizes the $\ell^2$ distance to $\overline{E_h^\mathrm{P}}$, subject to the constraints of preserving global conservation and positivity. Specifically, given $\overline{\vec{U}_h^\mathrm{P}}=\transpose{[\overline{\vec{U}_1^\mathrm{P}}, \cdots, \overline{\vec{U}_N^\mathrm{P}}]}$ with positive density $\overline{\rho_i^\mathrm{P}}\geq \epsilon$, find the minimizer for 
\begin{subequations}
    \label{postprocessing}
\begin{align}
\label{total-energy-opt}
\min_{\overline E_h\in\IR^N}
\left\|\overline E_h-\overline{E_h^\mathrm{P}}\right\|^2 
\quad\text{subjects to}\quad 
\sum_{i=1}^N \overline E_i |K_i| = \sum_{i=1}^N \overline{E_i^\mathrm{P}} |K_i|
\quad\text{and}\quad
\transpose{[\overline{\rho_i^\mathrm{P}},\overline{\vec{m}_i^\mathrm{P}}, \overline E_i]} \in G^\epsilon,\quad \forall i,
\end{align}
where $|K_i|$ is the area or volume of each cell $K_i$.
Let $\overline E_h^{\,\ast}=\transpose{[\overline E_1^{\,\ast}, \cdots, \overline E_N^{\,\ast}]}$ be the minimizer. Then we correct the DG polynomial cell averages for the total energy variable. Namely, let $E_i^\mathrm{P}(\vec{x})$ be the DG polynomial in each cell $K_i$, and we correct it by a constant
\begin{equation}
\label{update-average-limiter}
    E_i(\vec{x})=E_i^\mathrm{P}(\vec{x})-\overline{E^\mathrm{P}_i}+\overline E_i^{\,\ast}.
\end{equation} 
 \end{subequations}
The updated or postprocessed DG polynomials  ${\vec{U}_h^\mathrm{P}}(\vec{x})=\transpose{[\rho^\mathrm{P}_h(\vec{x}), \vec{m}^\mathrm{P}_h(\vec{x}),E_h(\vec{x})]}$ now have cell averages in the numerical admissible state set $G^\epsilon$, and the simple Zhang--Shu positivity-preserving limiter in \cite{zhang2010positivity, zhang2011positivity} can be used to further ensure the full scheme is positivity-preserving. 

Since $\ell^2$ distance is minimized, the accuracy of \eqref{total-energy-opt} can also be justified under suitable assumptions, which will be discussed in Section \ref{sec:accuracy-pp}.

\subsection{Efficient implementation of the constraint optimization defined postprocessing}
\label{sec:intro-postprocessing}
 The simple postprocessing approach \eqref{postprocessing} was considered in \cite{liu2023simple} for preserving bounds of a scalar variable in complex phase field equations. Thanks to the constraints in 
 \eqref{total-energy-opt}, global conservation and positivity of the internal energy are easily achieved, and the accuracy is also easy to justify for scalar variables   \cite{liu2023simple}, which are 
the advantages of such a simple approach. On the other hand, in any optimization based approach, it is often quite straightforward to have these desired properties such as positivity, conservation, and high order accuracy. From this perspective, the critical issue in all optimization based approaches is computational efficiency, especially for a time-dependent, demanding nonlinear system like \eqref{eq:CNS:model}.

In large-scale high-resolution fluid dynamic simulations, degree of freedoms to be processed at each time step can be quite large. Thus in general it is preferred to solve \eqref{total-energy-opt} by first order optimization methods since they scale well with problem size, i.e., the complexity is $\mathcal{O}(N)$ for each iteration, with $N$ being the total number of cells.

In \cite{liu2023simple}, it is demonstrated that
the minimizer to a constraint minimization like \eqref{total-energy-opt} can be efficiently computed by using the Douglas--Rachford splitting method \cite{lions1979splitting} if using the nearly optimal algorithm parameters obtained from a sharp asymptotic convergence rate analysis.
The Douglas--Rachford splitting method is a very popular first order splitting method, because it is equivalent to ADMM \cite{fortin2000augmented} and dual split Bregman method \cite{goldstein2009split} with special parameters, see also \cite{demanet2016eventual} and references therein for the equivalence. For special convex optimization problems, it is also  equivalent to PDHG \cite{chambolle2016introduction}. 
\par 

There are  other efficient alternative methods to solve  the  minimization  \eqref{total-energy-opt}, such as the breakpoint searching algorithms \cite{kiwiel2008breakpoint} with an $\mathcal O(N)$ computational complexity. For the $\ell^2$-norm minimization  \eqref{total-energy-opt}, the Douglas--Rachford splitting with the optimal parameters also has a provable computational complexity $\mathcal O(N)$  as shown in \cite{liu2023simple}, but with more flexibilities and advantages. First, the Douglas--Rachford splitting method is simple to describe and easy to implement since only three steps are needed in each iteration, which allows easy implementation, especially for efficient parallel computing.
Second, it is straightforward to extend the Douglas--Rachford splitting method to other postprocessing models such as the  $\ell^1$-norm minimization and directly enforcing invariant domain $G^\epsilon$,  see Remark \ref{rmk-3} and Remark \ref{rmk-4} in Section \ref{sec: DR-solver}.  Though the Douglas--Rachford splitting method may no longer have a provable $\mathcal O(N)$ computational complexity for $\ell^1$-norm minimization, it is nontrivial or impossible to generalize other  alternative methods for \eqref{total-energy-opt} to $\ell^1$-norm minimization. 
In  \ref{sec:appendix}, we show a comparison to one  simple and efficient alternative solving \eqref{total-energy-opt} by the method of Lagrange multiplier, to demonstrate the practical efficiency of the Douglas--Rachford splitting for large problems.

Given the DG polynomial after solving the subproblem ($\mathrm{P}$), we define the $i$-th cell as a bad cell if its cell average has negative internal energy, i.e.,  $\overline{\vec{U}_i^\mathrm{P}} = \transpose{[\overline{\rho_i^\mathrm{P}},\overline{\vec{m}_i^\mathrm{P}}, \overline{E_i^\mathrm{P}}]}\notin G^\epsilon$. 
Let $r$ be the number of bad cells, then $r/N$ is the bad cell ratio. It is proven in \cite{liu2023simple} that the sharp asymptotic linear convergence rate of the Douglas--Rachford splitting with the nearly optimal parameters is approximately $\frac{1-2\frac{r}{N}}{3-2\frac{r}{N}}\approx\frac{1}{3}$ when $r\ll N$. In other words, such a minimization solver is provably extremely efficient when the bad cell ratio is small, which is usually the case for a good scheme solving \eqref{eq:CNS:model} such as Strang splitting with DG methods \cite{liu2023positivity}.

\subsection{The main result and organization of this paper}

Our full scheme in this paper
is a very high order accurate in space, conservative, and positivity-preserving semi-implicit DG scheme to solve the compressible NS equations \eqref{eq:CNS:model}, with a standard hyperbolic CFL $\Delta t = \mathcal{O}{(\Delta x)}$. For the implicit part, the scheme is fully decoupled with two linear systems to solve sequentially for each time step. We emphasize that the spatial discretization in this paper is done by only DG methods, which is not exactly the same as the spatial scheme in \cite{liu2023positivity}, where the internal energy equation is discretized by continuous finite element method.
The main novelties of this paper include the optimization-based postprocessing approach \eqref{postprocessing} to preserve conservation and positivity for solving the parabolic subproblem as well as  a proper semi-implicit DG scheme with high order basis, which is carefully designed so that the DG scheme combined with the optimization-based positivity-preserving limiter can produce stable and solid results for challenging benchmark gas dynamics problems.  The minimizer to \eqref{total-energy-opt} can be efficiently computed by using the generalized Douglas--Rachford splitting method with nearly optimal parameters.

The postprocessing step \eqref{total-energy-opt} only preserves the global conservation and does not preserve any local conservation property. We remark that the local conservation in the Strang splitting approach for solving \eqref{eq:CNS:model} is already lost since the non-conservative variables are computed in \eqref{eqn-P-nonconservative}. Nonetheless, the global conservation can be ensured \cite{liu2023positivity}.
Thus from this perspective, the postprocessing step \eqref{total-energy-opt} is acceptable whenever the non-conservative form \eqref{eqn-P-nonconservative} is solved.

One can also consider a more general version of \eqref{total-energy-opt} by also modifying the density and momentum variables to enforce the positivity of the internal energy $\overline{\vec{U}_i^\mathrm{P}} = \transpose{[\overline{\rho_i^\mathrm{P}},\overline{\vec{m}_i^\mathrm{P}}, \overline{E_i^\mathrm{P}}]}\in G^\epsilon$. Such a more complicated limiter is certainly more difficult to implement efficiently. On the other hand, for the Strang splitting approach in \cite{guermond2021second, liu2023positivity}, the momentum variable is robustly computed, which allows us to consider a simpler limiter like \eqref{total-energy-opt}. Most importantly,  numerical tests suggest that the simple postprocessing \eqref{postprocessing} is sufficient to enforce the positivity thus the robustness for the subproblem ($\mathrm{P}$) in the Strang splitting with very high order DG methods.

\par
We emphasize that the postprocessing \eqref{postprocessing} is too simple to make a bad scheme more useful, e.g., it does not eliminate any oscillations. It is most useful for a good scheme that is stable for most testing cases yet might lose positivity thus robustness for solving challenging low pressure problems, e.g., the Strang splitting method in \cite{guermond2021second, liu2023positivity}. For instance, as will be shown by numerical tests in this paper, for computing the Mach 2000 astrophysical jet problem, Strang splitting with very high order DG methods produces blow-up due to loss of positivity, but will be stable when combined with the postprocessing \eqref{postprocessing}, i.e., an optimization based positivity-preserving limiter. On the other hand, there are many different kinds of DG methods for diffusion operators. With  a proper choice of the interior penalty DG method,  we demonstrate that  global conservation can be ensured when solving the diffusion subproblem implicitly in the Strang splitting of compressible Navier-Stokes system, and only two linear systems need to be solved in the Crank--Nicolson  time discretization of the diffusion subproblem. Moreover, the numerical tests suggest that such a high order DG scheme is a practical scheme producing solid results for some chanllenging benchmark problems.

\par
The rest of this paper is organized as follows. In Section~\ref{sec:numercal_scheme}, we introduce the fully discrete numerical scheme. In Section~\ref{sec:limiter}, we discuss a high order accurate constraint optimization based postprecessing procedure, which preserves the conservation and positivity. Numerical tests are shown in Section~\ref{sec:numercal_experiment}. Concluding remarks are given in Section~\ref{sec-remark}.

\section{Numerical scheme}\label{sec:numercal_scheme}
In this section, we describe the fully discretized numerical scheme for solving the compressible NS equations \eqref{eq:CNS:model}. Our scheme incorporates the DG spatial discretization within the Strang splitting framework. 

\subsection{Time discretization}
Given the conserved variables $\vec{U}^n$ at time $t^n$ ($n\geq0$) and the step size $\Delta t$, the Strang splitting for evolving to time $t^{n+1}=t^n + \Delta t$ for the system \eqref{eq:CNS:model} is to solve subproblems $(\mathrm{H})$ and $(\mathrm{P})$ separately \cite{guermond2021second, liu2023positivity}. A schematic flowchart for time marching is as follows:
\begin{align}\label{algorithm-splitting}
\vec{U}^n
\xrightarrow[\text{step size}~\frac{\Delta t}{2}]{\text{solve}~(\mathrm{H})} \vec{U}^\mathrm{H}
\xrightarrow[\text{step size}~\Delta t]{\text{solve}~(\mathrm{P})} \vec{U}^\mathrm{P}
\xrightarrow[\text{step size}~\frac{\Delta t}{2}]{\text{solve}~(\mathrm{H})} \vec{U}^{n+1}.
\end{align}
We utilize the strong stability preserving (SSP) Runge--Kutta method to solve $(\mathrm{H})$ and the $\theta$-method with a parameter $\theta\in(0,1]$ to solve $(\mathrm{P})$. For any $n\geq 0$, the time discretization in one time step consists of the following steps.
\begin{itemize}[leftmargin=0.5cm]
\item[] Step~1. Given $\vec{U}^n=\transpose{[\rho^n,\vec{m}^n,E^n]}$, we use the third order SSP Runge--Kutta method with step size $\frac{1}{2}\Delta t$ to compute $\vec{U}^{\mathrm{H}}=\transpose{[\rho^{\mathrm{H}}, \vec{m}^{\mathrm{H}}, E^{\mathrm{H}}]}$:
\begin{subequations}\label{eq:rk3}
\begin{align}
\vec{U}^{\mathrm{(1)}} &= \vec{U}^n - \frac{\Delta t}{2}\div{\vec{F}^\mathrm{a}(\vec{U}^n)},\\
\vec{U}^{\mathrm{(2)}} &= \frac{3}{4} \vec{U}^{n}+\frac{1}{4}\Big[\vec{U}^{(1)} - \frac{\Delta t}{2}\div{\vec{F}^\mathrm{a}(\vec{U}^{(1)})}\Big],\\
\vec{U}^{\mathrm{H}} &= \frac{1}{3} \vec{U}^{n}+\frac{2}{3}\Big[\vec{U}^{(2)} - \frac{\Delta t}{2}\div{\vec{F}^\mathrm{a}(\vec{U}^{(2)})}\Big].
\end{align}
\end{subequations}
 
\item[] Step~2. Given $\vec{U}^{\mathrm{H}} = \transpose{[\rho^{\mathrm{H}}, \vec{m}^{\mathrm{H}}, E^{\mathrm{H}}]}$, compute $(\vec{u}^\mathrm{H}, e^\mathrm{H})$ by solving
\begin{align*}
\vec{m}^\mathrm{H} = \rho^\mathrm{H} \vec{u}^\mathrm{H} \quad\text{and}\quad
E^\mathrm{H} = \rho^\mathrm{H}e^\mathrm{H} + \frac{\norm{\vec{m}^{\mathrm{H}}}{}^2}{2\rho^\mathrm{H}}.
\end{align*}
\item[] Step~3.
Given $(\vec{u}^\mathrm{H}, e^\mathrm{H})$, set $\rho^{\mathrm{P}} = \rho^{\mathrm{H}}$ due to \eqref{eqn-internal-rho}. We employ the Crank--Nicolson method to discretize \eqref{eqn-internal-u} and apply the $\theta$-method, where $\theta\in(0,1]$, to discretize \eqref{eqn-internal-e}. For the second step in Strang splitting \eqref{algorithm-splitting}, we have
\begin{align*}
&\vec{u}^\ast=\frac{1}{2} \vec{u}^\mathrm{P}+\frac12 \vec{u}^\mathrm{H}
\quad\text{and}\quad 
e^\ast=\theta e^\mathrm{P} + (1-\theta)e^\mathrm{H},\\
&\rho^{\mathrm{P}}\frac{\vec{u}^\mathrm{P}-\vec{u}^\mathrm{H}}{\Delta t} - \frac{1}{\Rey}\div{\vec{\tau}(\vec{u}^\ast)} = \vec{0}, \\
&\rho^{\mathrm{P}} \frac{e^\mathrm{P}-e^\mathrm{H}}{\Delta t} - \frac{\lambda}{\Rey}\laplace{e^\ast} = \frac{1}{\Rey}\vec{\tau}(\vec{u}^\ast):\grad{\vec{u}^\ast}. 
\end{align*}
The scheme above can be implemented as first to compute $(\vec{u}^{\ast}, e^{\ast})$ by sequentially solving two decoupled linear systems
\begin{subequations}\label{eq:time_discretization}
\begin{align}
\rho^{\mathrm{P}}\vec{u}^\ast - \frac{\Delta t}{2\Rey}\div{\vec{\tau}(\vec{u}^\ast)} &= \rho^{\mathrm{H}}\vec{u}^{\mathrm{H}},\label{eq:time:step3_1}\\
\rho^{\mathrm{P}} e^\ast - \frac{\theta{\Delta t}\,\lambda}{\Rey}\laplace{e^\ast} &= \rho^{\mathrm{H}}e^{\mathrm{H}} + \frac{\theta\Delta t}{\Rey}\vec{\tau}(\vec{u}^\ast):\grad{\vec{u}^\ast},\label{eq:time:step3_2}
\end{align}
\end{subequations}
then set $\vec{u}^\mathrm{P} = 2\vec{u}^\ast - \vec{u}^\mathrm{H}$ and $e^\mathrm{P} = \frac{1}{\theta} e^\ast + (1-\frac{1}{\theta})e^\mathrm{H}$.
\item[] Step~4. Given $(\rho^{\mathrm{P}}, \vec{u}^{\mathrm{P}}, e^{\mathrm{P}})$, compute $(\vec{m}^\mathrm{P}, E^\mathrm{P})$ by
\begin{align*}
\vec{m}^\mathrm{P} = \rho^\mathrm{P}\vec{\vec{u}}^\mathrm{P}
\quad\text{and}\quad
E^\mathrm{P} = \rho^\mathrm{P}e^\mathrm{P} + \frac{\norm{\vec{m}^{\mathrm{P}}}{}^2}{2\rho^\mathrm{P}}.
\end{align*}
\item[] Step~5. Given $\vec{U}^{\mathrm{P}}=\transpose{[\rho^{\mathrm{P}}, \vec{\vec{m}}^\mathrm{P}, E^{\mathrm{P}}]}$,  
to obtain $\vec{U}^{n+1}=\transpose{[\rho^{n+1},\vec{m}^{n+1},E^{n+1}]}$ in the third step in Strang splitting \eqref{algorithm-splitting}, solve $(\mathrm{H})$ for another $\frac{1}{2}\Delta t$ by the third order SSP Runge--Kutta method.  
\end{itemize}
 We have the first order backward Euler scheme with $\theta = 1$,  for which $e^\mathrm{P} = e^\ast $ and it is possible to design positivity-preserving schemes if the discrete Laplacian is monotone, e.g., $\IQ^2$ and $\IQ^3$ spectral element methods on uniform meshes, as shown in \cite{liu2023positivity}.
Unfortunately, for any $\theta<1$, 
 $e^\mathrm{P} = \frac{1}{\theta} e^\ast + (1-\frac{1}{\theta})e^\mathrm{H}$ is not a convex combination thus
 it is difficult to have $e^P >0$ even if $e^\ast >0$ can be ensured by a monotone discrete Laplacian. 
 For $\theta=\frac{1}{2}$, we have the second order Crank--Nicolson scheme. 
It is important to note that in each time step, only two decoupled linear systems need to be sequentially solved in \eqref{eq:time_discretization}.
\subsection{Preliminary aspects of space discretization}
We use the Runge--Kutta DG method to discretize subproblem $(\mathrm{H})$ and the interior penalty DG method to discretize subproblem $(\mathrm{P})$. For completeness, we briefly review these methods without delving into their derivation.  See \cite{zhang2010positivity,zhang2017positivity,liu2023positivity} for more details. For simplicity, we only consider $\IQ^k$ polynomial basis on uniform rectangular meshes, and there is no essential difficulty to extend the main results in this paper to unstructured meshes. For example, for preserving conservation and positivity, the constraint optimization-based postprocessing approach discussed in Section~\ref{sec:simplelimiter} is also applicable to $\IP^k$ polynomials on unstructured meshes. 

\paragraph{\bf Mesh, approximation spaces, and quadratures}
Let $\setE_h = \{K_i\}$ be a uniform partition of the computational domain $\Omega$ by square elements (cells) with the element diameter $h$. The unit outward normal of a cell $K$ is denoted by $\normal_K$. Let $\Gammah$ be the set of interior faces. For each interior face $e \in \Gammah$ shared by cells $K_{i^-}$ and $K_{i^+}$, with $i^- < i^+$, we define a unit normal vector $\normal_e$ that points from $K_{i^-}$ into $K_{i^+}$. For a boundary face $e = \partial K_{i^-} \cap \partial\Omega$, the normal $\normal_e$ is taken to be the unit outward vector to $\partial\Omega$. 
\par
Let $\IQ^k(K)$ be the space of polynomials of order at most $k$ for each variable defined on a cell $K$. Define the following piecewise polynomial spaces:
\begin{align*}
M_h^k &= \big\{\chi_h\in L^2(\Omega):~\forall K \in \setE_h,\, \on{\chi_h}{K} \in \IQ^k(K) \big\}, \\
\mathbf{X}_h^k &= \big\{\vec{\theta}_h \in L^2(\Omega)^d:~\forall K \in \setE_h,\, \on{\vec{\theta}_h}{K} \in \IQ^k(K)^d \big\}.
\end{align*}
On a reference element $\hat{K}=[-\frac{1}{2},\frac{1}{2}]^d$, we use $(k+1)^d$ Gauss--Lobatto points to construct Lagrange interpolation polynomials $\hat{\varphi}_j$. The basis functions on each cell $K_i\in\setE_h$ are defined by $\varphi_{ij}=\hat{\varphi}_j\circ\vec{F}_i^{-1}$, where $\vec{F}_i: \hat{K}\rightarrow K$ is an invertible mapping from the reference element to $K_i$.
These basis are numerically orthogonal with respect to the $(k+1)^d$-point Gauss--Lobatto quadrature rule.
\par
We summarize the quadrature rules employed in solving the hyperbolic and parabolic subproblems as well as the points to be used in the positivity-preserving limiter as follows:
\begin{enumerate}[leftmargin=0.5cm]
\item For face and volume integrals in ($\mathrm{H}$), we utilize a quadrature rule that is constructed by the tensor product of $(k+1)$-point Gauss quadrature. Denote the set of associated quadrature points here by $S_K^{\mathrm{H, int}}$ on a cell $K$.
\item For face and volume integrals in ($\mathrm{P}$), we utilize a quadrature rule that is constructed by the tensor product of $(k+1)$-point Gauss–Lobatto quadrature. Denote the set of associated quadrature points here by $S_K^{\mathrm{P}}$ on a cell $K$.
\item The points for weak positivity of ($\mathrm{H}$) are constructed by $(k+1)$-point Gauss quadrature tensor product with $L$-point Gauss--Lobatto quadrature in both $x$ and $y$ directions and we need $2L-3\geq k$ so that the $L$-point Gauss--Lobatto quadrature is exact for integrating DG  polynomials of degree $k$  \cite{zhang2017positivity}. Denote the set of associated quadrature points here by $S_K^{\mathrm{H, aux}}$ on a cell $K$. Though these points form a quadrature, we do not use them for computing any integrals. Instead, they are the points to be used in the positivity-preserving limiter \cite{zhang2010positivity, zhang2011positivity, zhang2017positivity}. 
\end{enumerate}
See Figure~\ref{fig:quadrature_rule} for an illustration the location of these quadrature points in the $\IQ^4$ scheme.
\begin{figure}[ht!]
\begin{center}
\begin{tabularx}{0.975\linewidth}{@{}C@{~}C@{~}C@{~}C@{~~}C@{}}
\includegraphics[width=0.18\textwidth]{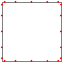} &
\includegraphics[width=0.18\textwidth]{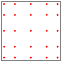}  &
\includegraphics[width=0.18\textwidth]{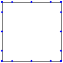} &
\includegraphics[width=0.18\textwidth]{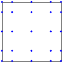}  &
\includegraphics[width=0.18\textwidth]{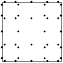} \\
\end{tabularx}
\caption{An illustration of the quadratures used in the $\IQ^4$ scheme. From left to right: the quadrature points for face integrals in ($\mathrm{H}$), volume integrals in ($\mathrm{H}$), face integrals in ($\mathrm{P}$), volume integrals in ($\mathrm{P}$), and the quadrature points for weak positivity. The black points are used only in defining the positivity-preserving limiter, and they are not used in calculating any numerical integration.}
\label{fig:quadrature_rule}
\end{center}
\end{figure}

\paragraph{\bf Hyperbolic subproblem}
One of the most popular high order accurate positivity-preserving approaches for solving compressible Euler equations $\partial_t{\vec{U}} + \div{\vec{F}^\mathrm{a}(\vec{U})} = \vec{0}$ was introduced by Zhang and Shu in \cite{zhang2010positivity}, also see \cite{zhang2017positivity}. We utilize the same scheme to solve $(\mathrm{H})$, which is defined as follows. For any piecewise polynomial test function $\varPsi_h$, find the piecewise polynomial solution $\vec{U}_h$, such that
\begin{align}\label{eq:hyper_space_dis}
\frac{\dd}{\dd t}(\vec{U}_h, \varPsi_h)
= (\vec{F}^{\mathrm{a}}(\vec{U}_h), \grad{\varPsi_h})
- \int_{\partial{K}} \widehat{\vec{F}^{\mathrm{a}}\cdot\normal_K}(\vec{U}_h^{-},\vec{U}_h^{+})\varPsi_h,
\end{align}
where $\widehat{\vec{F}^\mathrm{a}\cdot\normal_K}$ is any monotone flux  for $\vec{F}^\mathrm{a}$, e.g.,  a Lax--Friedrichs type flux. On a face $e\subset \partial K$, the local Lax--Friedrichs flux is defined by
\begin{align*}
\widehat{\vec{F}^\mathrm{a}\cdot\normal_K}(\vec{U}_h^-,\vec{U}_h^+) 
= \frac{\vec{F}^{\mathrm{a}}(\vec{U}_h^-) + \vec{F}^{\mathrm{a}}(\vec{U}_h^+)}{2}\cdot\normal_K 
- \frac{\alpha_e}{2}(\vec{U}_h^+ - \vec{U}_h^-),
\end{align*}
where the $\vec{U}_h^-$ (resp. $\vec{U}_h^+$) denotes the trace of $\vec{U}_h$ on the face $\partial{K}$ coming from the interior (resp. exterior) of $K$. 
The factor $\alpha_e$ denotes the maximum wave speed with maximum taken over all $\vec{U}_h^-$ and $\vec{U}_h^+$ along the face $e$, namely the largest magnitude of the eigenvalues of the Jacobian matrix $\frac{\partial \vec{F}^\mathrm{a}}{\partial \vec{U}}$, which equals to the wave speed $\abs{\vec{u}\cdot\normal_K}+\sqrt{\gamma\frac{p}{\rho}}$ for ideal gas equation of state.
\par
By convention, we replace $\vec{U}_h^+$ by an appropriate boundary function which realizes the boundary conditions when $\partial{K}\cap\partial{\Omega}\neq\emptyset$. For instance, if purely inflow condition $\vec{U} = \vec{U}_\mathrm{D}$ is imposed on $\partial{K}$, then $\vec{U}_h^+$ is replaced by $\vec{U}_\mathrm{D}$; if purely outflow condition is imposed on $\partial{K}$, then set $\vec{U}_h^+=\vec{U}_h^-$; and if reflective boundary condition for fluid--solid interfaces is imposed on $\partial{K}$, then set $\vec{U}_h^+ = \transpose{[\rho_h^-,\vec{m}_h^{-}-2(\vec{m}_h^-\cdot\normal_K)\normal_K,E_h^-]}$.

\paragraph{\bf Parabolic subproblem}
We use the interior penalty DG method for discretizing $\mathrm{(P)}$.
For convenience of introducing discrete forms in parabolic subproblem, we partition the boundary of the domain $\Omega$ into the union of two disjoint sets, namely $\partial{\Omega} = \partial{\Omega}_\mathrm{D} \cup \partial{\Omega}_\mathrm{N}$, where the Dirichlet boundary conditions ($\vec{u}=\vec{u}_\mathrm{D}$ and $e=e_\mathrm{D}$) are applied on $\partial{\Omega}_\mathrm{D}$ and the Neumann-type boundary conditions ($\vec{\tau}(\vec{u})\cdot\normal=\vec{0}$ and $\grad{e}\cdot\normal=0$) are applied on $\partial{\Omega}_\mathrm{N}$. Here, $\normal$ denotes the unit outer normal of domain $\Omega$.
\par
The average and jump operators of any vector quantity $\vec{u}$ on a boundary face coincide with its trace; and on interior faces they are defined by
\begin{align*}
\on{\avg{\vec{u}}}{e} = \frac{1}{2}\on{\vec{u}}{K_{i^-}} + \frac{1}{2}\on{\vec{u}}{K_{i^+}}, \quad
\on{\jump{\vec{u}}}{e} = \on{\vec{u}}{K_{i^-}} - \on{\vec{u}}{K_{i^+}}, \quad 
e = \partial K_{i^-} \cap \partial K_{i^+}.
\end{align*}
The related definitions of any scalar quantity are similar. For more details see \cite{Rivierebook}.
We employ the non-symmetric interior penalty DG (NIPG) method to discretize the terms $-2\div{\strain{(\vec{u})}}$ and $\div{((\div{\vec{u})\vecc{I}})}$. The associated bilinear forms $a_\strain$ and $a_\lambda$ are defined as follows:
\begin{align*}
a_\strain(\vec{u}, \vec{\theta}) &= 
2\sum_{K\in\setE_h} \int_K \strain{(\vec{u})}:\strain{(\vec{\theta})} 
- 2\sum_{e\in\Gammah\cup\partial\Omega_\mathrm{D}} \int_e\avg{\strain{(\vec{u})}\,\normal_e}\cdot\jump{\vec{\theta}}\nonumber\\
&+2\sum_{e\in\Gammah\cup\partial\Omega_\mathrm{D}} \int_e\avg{\strain{(\vec{\theta})}\,\normal_e}\cdot\jump{\vec{u}}
+ \frac{\sigma}{h}\sum_{e\in\Gammah\cup\partial\Omega_\mathrm{D}} \int_e \jump{\vec{u}}\cdot\jump{\vec{\theta}},\\
a_\lambda(\vec{u}, \vec{\theta}) &= -\sum_{K\in\setE_h}\int_K(\div{\vec{u}})(\div{\vec{\theta}})
+\!\! \sum_{e\in\Gammah\cup\partial\Omega_\mathrm{D}}\!\int_e\avg{\div{\vec{u}}}\jump{\vec{\theta}\cdot\normal_e} 
-\!\! \sum_{e\in\Gammah\cup\partial\Omega_\mathrm{D}}\!\int_e\avg{\div{\vec{\theta}}}\jump{\vec{u}\cdot\normal_e}.
\end{align*}
And the linear form $b_{\vec{\tau}}$ associated with term $-\div{\vec{\tau}(\vec{u})}$ for the Dirichlet boundary $\partial \Omega_\mathrm{D}$ in \eqref{eq:time:step3_1} is defined by
\begin{align*}
b_{\vec{\tau}}(\vec{\theta}) 
= 2\sum_{e\in\partial\Omega_\mathrm{D}} \int_e (\strain{(\vec{\theta})}\,\normal)\cdot\vec{u}_\mathrm{D}
+ \frac{\sigma}{h}\sum_{e\in\partial\Omega_\mathrm{D}} \int_e \vec{u}_\mathrm{D}\cdot\vec{\theta}
- \frac{2}{3}\sum_{e\in\partial\Omega_\mathrm{D}}\int_e\div{\vec{\theta}}\,(\vec{u}_\mathrm{D}\cdot\normal).
\end{align*}
We employ the incomplete interior penalty DG (IIPG) method to discretize the term $-\laplace{e}$ in \eqref{eq:time:step3_2}. The bilinear form $a_{\mathcal{D}}$ and the linear form $b_{\mathcal{D}}$ for term $-\laplace{e}$ are defined as follows:
\begin{align*}
a_{\mathcal{D}}(e,\chi) &=
\sum_{K\in\setE_h} \int_K \grad e \cdot \grad \chi
-\sum_{e\in\Gammah\cup\partial{\Omega_{\mathrm{D}}}} \int_e \avg{\grad e \cdot \normal_e} \jump{\chi}
+ \frac{\tilde{\sigma}}{h} \sum_{e\in\Gammah\cup\partial{\Omega_{\mathrm{D}}}}\int_e \jump{e}\jump{\chi},\\
b_{\mathcal{D}}(\chi) &=
\frac{\tilde{\sigma}}{h}\sum_{e\in\partial{\Omega_{\mathrm{D}}}}\int_e e_\mathrm{D}\chi.
\end{align*}
For the sake of global conservation of total energy, to discrete term $\vec{\tau}(\vec{u}):\grad{\vec{u}} = 2\strain{(\vec{u})}:\grad{\vec{u}} - \frac{2}{3}((\div{\vec{u}})\vecc{I}):\grad{\vec{u}}$ in \eqref{eq:time:step3_2}, by using the tensor identity $\strain{(\vec{u})}:\grad{\vec{u}} = \strain{(\vec{u})}:\strain{(\vec{u})}$, the DG forms $b_\strain$ and $b_\lambda$ are designed for terms $2\strain{(\vec{u})}:\grad{\vec{u}}$ and $-((\div{\vec{u})\vecc{I}}):\grad{\vec{u}}$, respectively.
\begin{align*}
b_\strain(\vec{u}, \chi) &=
2\sum_{K\in\setE_h} \int_K \strain{(\vec{u})}:\strain{(\vec{u})}\chi
+ \frac{\sigma}{h}\sum_{e\in\Gammah} \int_e \jump{\vec{u}}\cdot\jump{\vec{u}}\avg{\chi}
+ \frac{\sigma}{h}\sum_{e\in\partial\Omega_\mathrm{D}} \int_e (\vec{u}-\vec{u}_\mathrm{D})\cdot(\vec{u}-\vec{u}_\mathrm{D}) \chi,\\
b_\lambda(\vec{u}, \chi) &= -\sum_{K\in\setE_h}\int_K(\div{\vec{u}})(\div{\vec{u}})\chi.
\end{align*}
The DG forms above employ penalty parameters $\sigma$ and $\tilde{\sigma}$. For any $\sigma\geq0$, the NIPG bilinear form is coercive. In particular, NIPG0 refers to the choice $\sigma=0$, e.g., the penalty term is removed. The NIPG0 method is convergent for polynomial degrees greater than or equal to two in two dimension \cite{Rivierebook}. And more importantly, the NIPG0 method eliminates the face penalties, thereby reducing the numerical viscosity.
For IIPG method, the penalty $\tilde{\sigma}$ needs to be large enough to achieve coercivity.

\subsection{The simple positivity-preserving limiter}
\label{sec:simplelimiter}
The Zhang--Shu limiter \cite{zhang2010maximum, zhang2010positivity} is a simple limiter for enforcing positivity of the approximation polynomial on a finite set $S$ when the polynomial cell average is positive.
 Let $\vec{U}_K(\vec{x}) = \transpose{[\rho_K, \vec{m}_K, E_K]}$ be the DG polynomial on cell $K$. 
A simplified version of the limiter \cite{zhang2017positivity} modifies the DG polynomial $\vec{U}_K(\vec{x})$ with the following steps under the assumption that $\overline{\vec{U}}_K = \frac{1}{\abs{K}}\int_K \vec{U}_K\in G^\epsilon$.
\begin{itemize}[leftmargin=0.5cm]
\item[] 1. First enforce positivity of density  by
\begin{align*}
\widehat{\rho}_K = \theta_\rho (\rho_K - \overline{\rho}_K) + \overline{\rho}_K,
\quad
\theta_{\rho}=\min\biggl\{1,\, \frac{\overline{\rho}_K-\epsilon}{\overline{\rho}_K-\min\limits_{\vec{x}_q\in S_K}\rho_K(\vec{x}_q)}\biggr\},
\end{align*}
where $\overline{\rho}_K$ denotes the cell average of $\rho_K$ on cell $K$. Notice that $\widehat{\rho}_K$ and $\rho_K$ have the same cell average, and $\widehat{\rho}_K = {\rho}_K$ if $\min\limits_{\vec{x}_q\in S_K}\rho_K(\vec{x}_q)\geq \epsilon.$
\item[] 2. Define $\widehat{\vec{U}}_h = \transpose{[\widehat{\rho}_h, \vec{m}_h, E_h]}$ and enforce positivity of internal energy by
\begin{align*}
\widetilde{\vec{U}}_K = \theta_e (\widehat{\vec{U}}_K - \overline{\vec{U}}_K) + \overline{\vec{U}}_K,
\quad
\theta_{e}=\min\biggl\{1,\, \frac{\overline {\rho e}_K - \epsilon}{\overline{\rho e}_K - \min\limits_{\vec{x}_q\in S_K}\rho e_K(\vec{x}_q)}\biggr\},
\end{align*}
where $\overline{\rho e}_K=\overline E_K - \frac{\|\overline{\vec{m}}_K\|^2}{2\overline\rho_K}$
and $\rho e_K(\vec{x}_q) = E_K(\vec{x}_q) - \frac{\|\vec{m}_K(\vec{x}_q)\|^2}{2\rho_K(\vec{x}_q)}$.
Notice that $\widetilde{\vec{U}}_K$ has the same cell average, the positivity is
implied by the Jensen's inequality satisfied by the concave internal energy function \cite{zhang2017positivity}.
\end{itemize}
We refer to \cite{zhang2010positivity, zhang2017positivity,xu2017bound} on the justification of its   high order accuracy.

\subsection{The fully discrete scheme}
\label{sec:flowchart}
Let $(\cdot,\cdot)$ denote the $L^2$ inner product over domain $\Omega$ evaluated by Gauss quadrature in ($\mathrm{H}$) and $\langle\cdot,\cdot\rangle$ denote the $L^2$ inner product over domain $\Omega$ evaluated by Gauss--Lobatto quadrature in ($\mathrm{P}$).

Given the DG solution $\vec{U}_h^n$ at time $t^n$ ($n\geq0$), a schematic flowchart for evolving to time $t^{n+1} = t^n + \Delta t$ is given as:
\begin{align*}
\vec{U}_h^n
\xrightarrow[\text{step size}~\frac{\Delta t}{2}]{\text{solve}~(\mathrm{H})} \vec{U}_h^\mathrm{H}
\xrightarrow[]{\text{$L^2$ proj.}} (\vec{u}_h^\mathrm{H},e_h^\mathrm{H})
\xrightarrow[\text{step size}~\Delta t]{\text{solve}~(\mathrm{P})} (\vec{u}_h^\mathrm{P},e_h^\mathrm{P})
\xrightarrow[]{\text{$L^2$ proj.}} \vec{U}_h^\mathrm{P}
\xrightarrow[\text{step size}~\frac{\Delta t}{2}]{\text{solve}~(\mathrm{H})} \vec{U}_h^{n+1},
\end{align*}
where the optimization-based postprocessing will be applied to $\vec{U}_h^\mathrm{P}$, as will be described in Step 4 below.
For any $n\geq0$, our fully discrete scheme for solving \eqref{eq:CNS:model} in one step consists of the following steps.
\begin{itemize}[leftmargin=0.5cm]
\item[] Step~1. Given $\vec{U}_h^n \in M_h^k\times \mathbf{X}_h^k\times M_h^k$, compute $\vec{U}_h^\mathrm{H}\in M_h^k\times \mathbf{X}_h^k\times M_h^k$ by the DG method \eqref{eq:hyper_space_dis} with the positivity-preserving SSP Runge--Kutta \eqref{eq:rk3} \cite{zhang2010positivity, zhang2017positivity} using step size $\frac{\Delta t}{2}$. After each Runge--Kutta stage, apply the Zhang--Shu positivity-preserving limiter to ensure that all point values at $S_K^{\mathrm{H, int}}$ and $S_K^{\mathrm{H, aux}}$ have positive density and internal energy.
\item[] Step~2. 
Use the Zhang--Shu positivity-preserving limiter to ensure that  all point values at $S_K^{\mathrm{P}}$ have positive density and internal energy.
Given $\vec{U}_h^\mathrm{H} \in M_h^k\times \mathbf{X}_h^k\times M_h^k$, compute $(\vec{u}_h^\mathrm{H}, e_h^\mathrm{H}) \in \mathbf{X}_h^k\times M_h^k$ by $L^2$ projection
\begin{align}\label{eq:CNS:P_full_dis_L2proj1}
\langle\vec{m}_h^\mathrm{H},\vec{\theta}_h \rangle = \langle\rho_h^\mathrm{H} \vec{u}_h^\mathrm{H},\vec{\theta}_h \rangle,~~
\forall \vec{\theta}_h \in \mathbf{X}_h^k \quad\text{and}\quad
\langle E_h^\mathrm{H},\chi_h \rangle = \langle\rho_h^\mathrm{H}e_h^\mathrm{H},\chi_h \rangle + \langle\frac{\vec{m}_h^{\mathrm{H}}}{2\rho_h^\mathrm{H}},\vec{m}_h^{\mathrm{H}}\chi_h \rangle,~~ \forall \chi_h\in M_h^k.
\end{align}
\item[] Step~3. Given $(\rho^{\mathrm{H}}_h, \vec{u}^{\mathrm{H}}_h) \in M_h^k \times \mathbf{X}_h^k$, set $\rho^{\mathrm{P}}_h = \rho^{\mathrm{H}}_h$ and solve $(\vec{u}^{\ast}_h, \vec{u}^{\mathrm{P}}_h)\in \mathbf{X}_h^k\times\mathbf{X}_h^k$, such that for all $\vec{\theta}_h\in \mathbf{X}_h^k$
\begin{subequations}\label{eq:space:step3}
\begin{align}
\langle\rho_h^\mathrm{P}\vec{u}_h^\ast,\vec{\theta}_h\rangle + \frac{\Delta t}{2\Rey}a_\strain(\vec{u}^{\ast}_h,\vec{\theta}_h) + \frac{\Delta t}{3\Rey}a_\lambda(\vec{u}^{\ast}_h,\vec{\theta}_h)
&= \langle\rho_h^\mathrm{H}\vec{u}_h^\mathrm{H},\vec{\theta}_h\rangle +\frac{\Delta t}{2\Rey} b_{\vec{\tau}}(\vec{\theta}_h),\label{eq:CNS:P_full_dis2_1}\\
\vec{u}^{\mathrm{P}}_h &= 2\vec{u}^{\ast}_h - \vec{u}^{\mathrm{H}}_h.\label{eq:CNS:P_full_dis2_2}
\end{align}
Then given $(\rho^{\mathrm{H}}_h, \rho^{\mathrm{P}}_h, \vec{u}^{\ast}_h, e^{\mathrm{H}}_h) \in M_h^k\times M_h^k\times \mathbf{X}_h^k\times M_h^k$, solve for $(e^{\ast}_h, e^{\mathrm{P}}_h) \in M_h^k\times M_h^k$, such that for all $\chi_h\in M_h^k$
\begin{align}\label{eq:CNS:P_full_dis3}
\langle\rho^\mathrm{P}_h e_h^\ast,\chi_h\rangle + \frac{\theta\Delta t\lambda}{\Rey}a_{\mathcal{D}}(e_h^\ast,\chi_h) &= \langle\rho_h^\mathrm{H} e_h^\mathrm{H},\chi_h\rangle + \frac{\theta\Delta t}{\Rey}b_\strain(\vec{u}_h^{\ast},\chi_h) 
+ \frac{2\theta\Delta t}{3\Rey}b_\lambda(\vec{u}_h^{\ast},\chi_h) + \frac{\theta\Delta t\lambda}{\Rey}b_\mathcal{D}(\chi_h),\\
e^\mathrm{P}_h &= \frac{1}{\theta} e^\mathrm{\ast}_h + (1-\frac{1}{\theta})e^\mathrm{H}_h.\label{full-scheme-eP}
\end{align}
\end{subequations}
\item[] Step~4. Given $(\rho_h^{\mathrm{P}}, \vec{u}_h^{\mathrm{P}}, e_h^{\mathrm{P}})\in M_h^k\times \mathbf{X}_h^k\times M_h^k$, compute $(\vec{m}_h^\mathrm{P}, E_h^\mathrm{P})\in \mathbf{X}_h^k\times M_h^k$ by $L^2$ projection
\begin{align}\label{eq:CNS:P_full_dis_L2proj2}
\langle\vec{m}_h^\mathrm{P},\vec{\theta}_h\rangle = \langle\rho_h^\mathrm{P}\vec{\vec{u}}_h^\mathrm{P},\vec{\theta}_h\rangle,~~
\forall \vec{\theta}_h \in \mathbf{X}_h^k
\quad\text{and}\quad
\langle E_h^\mathrm{P},\chi_h\rangle = \langle\rho_h^\mathrm{P}e_h^\mathrm{P},\chi_h\rangle + \langle\frac{\vec{m}_h^{\mathrm{P}}}{2\rho_h^\mathrm{P}},\vec{m}_h^{\mathrm{P}}\chi_h\rangle,~~ \forall \chi_h\in M_h^k.
\end{align}
Postprocess $\vec{U}_h^\mathrm{P}$ by the constraint optimization-based limiting strategy, see Section~\ref{sec:limiter}. Then the cell averages have positive states, and we can apply the Zhang--Shu positivity-preserving limiter to ensure that all point values at $S_K^{\mathrm{H, int}}$ and $S_K^{\mathrm{H, aux}}$ have positive density and internal energy.
\item[] Step~5. Given $\vec{U}_h^\mathrm{P}\in M_h^k\times \mathbf{X}_h^k\times M_h^k$, compute $\vec{U}_h^{n+1}\in M_h^k\times \mathbf{X}_h^k\times M_h^k$ by the DG method \eqref{eq:hyper_space_dis} with the positivity-preserving SSP Runge--Kutta \eqref{eq:rk3} \cite{zhang2010positivity, zhang2017positivity} using step size $\frac{\Delta t}{2}$. After each Runge--Kutta stage, apply the Zhang--Shu positivity-preserving limiter to ensure that all point values at $S_K^{\mathrm{H, int}}$ and $S_K^{\mathrm{H, aux}}$ have positive density and internal energy.
\end{itemize}
The $\vec{U}_h^0$ is obtained through the $L^2$ projection of the initial data $\vec{U}^0$, followed by postprocessing it with the Zhang--Shu limiter \cite{zhang2010positivity}. Thus, $\vec{U}_h^0$ belongs to the set of admissible states. In addition, we highlight in each time step only two decoupled linear systems \eqref{eq:CNS:P_full_dis2_1} and \eqref{eq:CNS:P_full_dis3} need to be solved sequentially.
\begin{remark}
For $\IQ^k$ scheme, the $\IQ^k$ Lagrangian basis functions defined at Gauss--Lobatto points are orthogonal at the $(k+1)^d$-point Gauss--Lobatto quadrature points. Thus, in Step~2 and Step~4, no linear systems need to be solved for computing the $L^2$ projection.
\end{remark}

\subsection{Global conservation of the fully discrete scheme}

We first discuss the global conservation of momentum and total energy. 
Notice that the local conservation for mass is naturally inherited from the Runge--Kutta DG method solving compressible Euler equations. 
For simplicity, we only discuss conservation in the context of periodic boundary conditions. It is straightforward to extend the discussion to many other types of boundary conditions, such as the ones implemented in the numerical tests in this paper.
\par
The following result is essentially the same as \cite[Theorem 1]{liu2023positivity}.
However, the time discretization used in this paper is the $\theta$-scheme for the internal energy equation, whereas the time discretization in \cite[Theorem 1]{liu2023positivity} is the backward Euler scheme. In addition, the spatial discretization in this paper is a DG scheme, while the spatial discretization in \cite{liu2023positivity} is a combination of DG and continuous finite element method. Thus, for completeness, we include the proof of the global conservation. 

\par
\begin{theorem}\label{thm:dis_momentum_conv}
Assume $\vec{U}_h^\mathrm{P}(\vec{x}_q)$ belongs to the set of admissible states for all $\vec{x}_q\in S_h$, then the fully discrete scheme conserves density, momentum, and total energy. We have
\begin{align*}
(\rho_h^{n},1) = (\rho_h^{n+1},1),\quad (\vec{m}_h^{n},\vec{1}) = (\vec{m}_h^{n+1},\vec{1}), \quad (E_h^{n},1) = (E_h^{n+1},1).
\end{align*}
\end{theorem}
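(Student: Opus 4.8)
The plan is to treat the three substeps of the Strang splitting \eqref{algorithm-splitting} separately, since a conservation property preserved by each substep is preserved by their composition. For the two hyperbolic substeps (Step~1 and Step~5), global conservation of $\rho$, $\vec{m}$, and $E$ follows from the standard DG argument: taking the test function $\varPsi_h$ to be a constant in \eqref{eq:hyper_space_dis} kills the volume term $(\vec{F}^\mathrm{a}(\vec{U}_h),\grad\varPsi_h)$, while the single-valued numerical flux $\widehat{\vec{F}^\mathrm{a}\cdot\normal_K}$ makes the face contributions telescope and cancel across interior faces; under periodic boundary conditions the remaining boundary terms cancel as well, and the SSP Runge--Kutta stages, being convex combinations of forward-Euler updates, inherit this property. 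The optimization-based postprocessing in Step~4 preserves global conservation by construction: it leaves $\rho_h^\mathrm{P}$ and $\vec{m}_h^\mathrm{P}$ untouched, while the equality constraint in \eqref{total-energy-opt} together with the constant-shift update \eqref{update-average-limiter} guarantees $\sum_i\overline{E_i}\abs{K_i}=\sum_i\overline{E_i^\mathrm{P}}\abs{K_i}$, i.e. $(E_h,1)$ is unchanged; the subsequent Zhang--Shu limiter preserves cell averages and hence $(E_h,1)$ as well. Thus everything reduces to the parabolic substep (Step~3 together with the surrounding $L^2$ projections of Step~2 and Step~4).

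Within the parabolic substep, conservation of density is immediate from $\rho_h^\mathrm{P}=\rho_h^\mathrm{H}$. For momentum, I would test the velocity equation \eqref{eq:CNS:P_full_dis2_1} with the constant vector $\vec{\theta}_h=\vec{1}$. Since $\strain(\vec{1})=\vecc{0}$ and $\jump{\vec{1}}=\vec{0}$, every term of $a_\strain(\vec{u}_h^\ast,\vec{1})$ vanishes, and since $\div\vec{1}=0$ and $\jump{\vec{1}\cdot\normal_e}=0$, so does $a_\lambda(\vec{u}_h^\ast,\vec{1})$; with periodic boundaries $b_{\vec{\tau}}\equiv0$. This yields $\langle\rho_h^\mathrm{P}\vec{u}_h^\ast,\vec{1}\rangle=\langle\rho_h^\mathrm{H}\vec{u}_h^\mathrm{H},\vec{1}\rangle$, and combining with $\vec{u}_h^\mathrm{P}=2\vec{u}_h^\ast-\vec{u}_h^\mathrm{H}$ and $\rho_h^\mathrm{P}=\rho_h^\mathrm{H}$ gives $\langle\rho_h^\mathrm{P}\vec{u}_h^\mathrm{P},\vec{1}\rangle=\langle\rho_h^\mathrm{H}\vec{u}_h^\mathrm{H},\vec{1}\rangle$. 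The $L^2$ projections \eqref{eq:CNS:P_full_dis_L2proj1} and \eqref{eq:CNS:P_full_dis_L2proj2} with test function $\vec{1}$ then transfer this to $\langle\vec{m}_h^\mathrm{P},\vec{1}\rangle=\langle\vec{m}_h^\mathrm{H},\vec{1}\rangle$; since both quadrature rules integrate degree-$k$ polynomials against a constant exactly, this coincides with the asserted $(\vec{m}_h^\mathrm{P},\vec{1})=(\vec{m}_h^\mathrm{H},\vec{1})$.

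The heart of the argument, and the main obstacle, is energy conservation across the parabolic substep, where the viscous dissipation removed from the kinetic energy must be returned \emph{exactly} to the internal energy. Writing $\langle E_h,1\rangle=\langle\rho_h e_h,1\rangle+\langle\tfrac{\vec{m}_h}{2\rho_h},\vec{m}_h\rangle$ for both the $\mathrm{H}$ and $\mathrm{P}$ states, I would proceed along two tracks. First, testing the internal-energy equation \eqref{eq:CNS:P_full_dis3} with $\chi_h=1$ annihilates $a_{\mathcal{D}}(e_h^\ast,1)$ (its volume and face terms contain $\grad1$ or $\jump{1}$) and $b_{\mathcal{D}}(1)$ under periodic boundaries; feeding the result into $e_h^\mathrm{P}=\tfrac1\theta e_h^\ast+(1-\tfrac1\theta)e_h^\mathrm{H}$ and using $\rho_h^\mathrm{P}=\rho_h^\mathrm{H}$, the factor $\theta$ cancels cleanly and one obtains $\langle\rho_h^\mathrm{P}e_h^\mathrm{P},1\rangle-\langle\rho_h^\mathrm{H}e_h^\mathrm{H},1\rangle=\tfrac{\Delta t}{\Rey}b_\strain(\vec{u}_h^\ast,1)+\tfrac{2\Delta t}{3\Rey}b_\lambda(\vec{u}_h^\ast,1)$. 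Second, testing \eqref{eq:CNS:P_full_dis2_1} with $\vec{\theta}_h=\vec{u}_h^\ast$ and using a polarization identity together with $\vec{u}_h^\ast=\tfrac12(\vec{u}_h^\mathrm{P}+\vec{u}_h^\mathrm{H})$ shows that the kinetic-energy decrease $\tfrac12\langle\rho_h\vec{u}_h^\mathrm{H},\vec{u}_h^\mathrm{H}\rangle-\tfrac12\langle\rho_h\vec{u}_h^\mathrm{P},\vec{u}_h^\mathrm{P}\rangle$ equals $\tfrac{\Delta t}{\Rey}a_\strain(\vec{u}_h^\ast,\vec{u}_h^\ast)+\tfrac{2\Delta t}{3\Rey}a_\lambda(\vec{u}_h^\ast,\vec{u}_h^\ast)$.

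The two tracks are matched by the algebraic identities $b_\strain(\vec{u},1)=a_\strain(\vec{u},\vec{u})$ and $b_\lambda(\vec{u},1)=a_\lambda(\vec{u},\vec{u})$, which are precisely what motivated the design of $b_\strain$ and $b_\lambda$ through the tensor identity $\strain(\vec{u}):\grad\vec{u}=\strain(\vec{u}):\strain(\vec{u})$: in $a_\strain(\vec{u},\vec{u})$ the two consistency face integrals carry opposite signs and cancel, leaving exactly the volume term $2\int_K\strain(\vec{u}):\strain(\vec{u})$ and the interior penalty $\tfrac\sigma h\int_e\jump{\vec{u}}\cdot\jump{\vec{u}}$ that constitute $b_\strain(\vec{u},1)$ (using $\avg{1}=1$), and likewise the consistency integrals in $a_\lambda(\vec{u},\vec{u})$ cancel to leave $-\int_K(\div\vec{u})^2=b_\lambda(\vec{u},1)$. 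Hence the internal-energy gain equals the kinetic-energy loss at the velocity level. The final link is to identify this velocity-level kinetic energy with the momentum-based one appearing in $E_h$: because the $L^2$ projections in Step~2 and Step~4 are evaluated with the Gauss--Lobatto quadrature and the $\IQ^k$ basis is Lagrangian at those very nodes, the projections collocate, so $\vec{m}_h(\vec{x}_q)=\rho_h(\vec{x}_q)\vec{u}_h(\vec{x}_q)$ at every quadrature node and therefore $\langle\tfrac{\vec{m}_h}{2\rho_h},\vec{m}_h\rangle=\tfrac12\langle\rho_h\vec{u}_h,\vec{u}_h\rangle$ for both the $\mathrm{H}$ and $\mathrm{P}$ states; the admissibility hypothesis $\vec{U}_h^\mathrm{P}(\vec{x}_q)\in G^\epsilon$ ensures $\rho_h>0$ at the nodes so these pointwise divisions are well defined. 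Combining the three displayed relations gives $\langle E_h^\mathrm{P},1\rangle=\langle E_h^\mathrm{H},1\rangle$, and chaining the substeps completes the proof. The delicate points to get right are the exact cancellation of the consistency terms (so that $a_\strain(\vec{u},\vec{u})$ and $a_\lambda(\vec{u},\vec{u})$ collapse to their coercive parts), the clean cancellation of $\theta$, and the collocation identity tying $\vec{m}_h$ to $\rho_h\vec{u}_h$ at the Gauss--Lobatto nodes.
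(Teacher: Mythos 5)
Your proposal is correct and follows essentially the same route as the paper's proof: constant test functions for mass and momentum, testing the velocity equation with $\vec{u}_h^\ast=\tfrac12(\vec{u}_h^\mathrm{P}+\vec{u}_h^\mathrm{H})$ to extract the kinetic-energy balance, testing the internal-energy equation with $\chi_h=1$, and matching the two via $a_\strain(\vec{u},\vec{u})=b_\strain(\vec{u},1)$ and $a_\lambda(\vec{u},\vec{u})=b_\lambda(\vec{u},1)$ together with the Gauss--Lobatto collocation identity for the $L^2$ projections. You are in fact more explicit than the paper on the cancellation of the NIPG consistency terms and on why $\langle\tfrac{\vec{m}_h}{2\rho_h},\vec{m}_h\rangle=\tfrac12\langle\rho_h\vec{u}_h,\vec{u}_h\rangle$, but the argument is the same.
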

\begin{proof}
Both the explicit Runge--Kutta DG method for hyperbolic subproblem ($\mathrm{H}$) and the Zhang--Shu limiter conserve mass, momentum, and total energy \cite{zhang2010positivity,zhang2017positivity}. We have 
\begin{align*}
(\rho_h^{n},1) = (\rho_h^{\mathrm{H}},1), \quad
(\vec{m}_h^{n},\vec{1}) = (\vec{m}_h^{\mathrm{H}},\vec{1}),\quad
(E_h^{n},1) = (E_h^{\mathrm{H}},1).
\end{align*}
It is easy to verify the discrete mass conservation, since $(\rho_h^{n+1},1) = (\rho_h^{\mathrm{P}},1)$ and we set $\rho_h^{\mathrm{H}} = \rho_h^{\mathrm{P}}$ in Step~3.
\par
For the discrete momentum conservation, we have $(\vec{m}_h^{n},\vec{1}) = (\vec{m}_h^{\mathrm{H}},\vec{1})$ and $(\vec{m}_h^{n+1},\vec{1}) = (\vec{m}_h^{\mathrm{P}},\vec{1})$.
For $\IQ^k$ scheme, the quadrature rules in subproblems ($\mathrm{H}$) and ($\mathrm{P}$) are both exact for integrating polynomials of degree $k$,
Thus, we also have $(\vec{m}_h^{\mathrm{H}},\vec{1}) = \langle\vec{m}_h^{\mathrm{H}},\vec{1}\rangle$ and $(\vec{m}_h^{\mathrm{P}},\vec{1}) = \langle\vec{m}_h^{\mathrm{P}},\vec{1}\rangle$.
Take $\vec{\theta}_h = \vec{1}$ in \eqref{eq:CNS:P_full_dis_L2proj1} and \eqref{eq:CNS:P_full_dis_L2proj2}, we get $\langle\vec{m}_h^{\mathrm{H}},\vec{1}\rangle = \langle\rho_h^\mathrm{H} \vec{u}_h^\mathrm{H},\vec{1}\rangle$ and $\langle\vec{m}_h^{\mathrm{P}},\vec{1}\rangle = \langle\rho_h^\mathrm{P} \vec{u}_h^\mathrm{P},\vec{1}\rangle$.
The above identities indicate $(\vec{m}_h^{n},\vec{1}) = \langle\rho_h^\mathrm{H} \vec{u}_h^\mathrm{H},\vec{1}\rangle$ and $(\vec{m}_h^{n+1},\vec{1}) = \langle\rho_h^\mathrm{P} \vec{u}_h^\mathrm{P},\vec{1}\rangle$. 
By selecting $\vec{\theta}_h = \vec{1}$ in \eqref{eq:CNS:P_full_dis2_1}, we obtain $\langle\rho_h^\mathrm{H} \vec{u}_h^\mathrm{H},\vec{1}\rangle = \langle\rho_h^\mathrm{P} \vec{u}_h^\mathrm{P},\vec{1}\rangle$, namely $(\vec{m}_h^{n},\vec{1}) = (\vec{m}_h^{n+1},\vec{1})$ holds.
\par
For the discrete energy conservation, notice the basis are numerically orthogonal and similar to above, we have $(E_h^{n},1) = \langle\rho_h^\mathrm{H} e_h^\mathrm{H}, 1\rangle + \frac{1}{2}\langle\rho_h^\mathrm{H}\vec{u}_h^{\mathrm{H}},\vec{u}_h^{\mathrm{H}}\rangle$ and $(E_h^{n+1},1) = \langle\rho_h^\mathrm{P} e_h^\mathrm{P},1\rangle + \frac{1}{2}\langle\rho_h^\mathrm{P}\vec{u}_h^{\mathrm{P}},\vec{u}_h^{\mathrm{P}}\rangle$.
Recall that $b_{\vec{\tau}}(\vec{\theta})=0$ and $b_\mathcal{D}(\chi)=0$ for periodic boundary conditions, thus
by \eqref{eq:CNS:P_full_dis2_2} and $\rho_h^\mathrm{H} = \rho_h^\mathrm{P}$, the \eqref{eq:CNS:P_full_dis2_1} can be written as
\begin{align*}
\langle\rho_h^\mathrm{P}\vec{u}_h^\mathrm{P},\vec{\theta}_h\rangle + \frac{\Delta t}{\Rey}a_\strain(\vec{u}^{\ast}_h,\vec{\theta}_h) + \frac{2\Delta t}{3\Rey}a_\lambda(\vec{u}^{\ast}_h,\vec{\theta}_h) &= \langle\rho_h^\mathrm{H}\vec{u}_h^\mathrm{H},\vec{\theta}_h\rangle.
\end{align*}
Plugging in $\vec{\theta}_h = (\vec{u}^{\mathrm{P}}_h + \vec{u}^{\mathrm{H}}_h)/2 = \vec{u}^{\ast}_h$, we have
\begin{align}\label{eq:conervation_proof_1}
\frac{1}{2}\langle\rho_h^\mathrm{P}\vec{u}_h^\mathrm{P},\vec{u}_h^\mathrm{P}\rangle + \frac{\Delta t}{\Rey}a_\strain(\vec{u}^{\ast}_h,\vec{u}^{\ast}_h) + \frac{2\Delta t}{3\Rey}a_\lambda(\vec{u}^{\ast}_h,\vec{u}^{\ast}_h) = \frac{1}{2}\langle\rho_h^\mathrm{H}\vec{u}_h^\mathrm{H},\vec{u}_h^\mathrm{H}\rangle.
\end{align}
Taking $\chi_h = 1$ in \eqref{eq:CNS:P_full_dis3}, we have
\begin{align*}
\langle\rho^\mathrm{P}_h e_h^\ast,1\rangle + \frac{\theta\Delta t\lambda}{\Rey}a_{\mathcal{D}}(e_h^\ast,1) = \langle\rho_h^\mathrm{H} e_h^\mathrm{H},1\rangle + \frac{\theta\Delta t}{\Rey}b_\strain(\vec{u}_h^{\ast},1) + \frac{2\theta\Delta t}{3\Rey}b_\lambda(\vec{u}_h^{\ast},1).
\end{align*}
Recall that $e^\ast = \theta e^\mathrm{P} + (1-\theta)e^\mathrm{H}$, we have
\begin{align}\label{eq:conervation_proof_2}
\langle\rho^\mathrm{P}_h e_h^\mathrm{P},1\rangle + \frac{\Delta t\lambda}{\Rey}a_{\mathcal{D}}(e_h^\ast,1) = \langle\rho_h^\mathrm{H} e_h^\mathrm{H},1\rangle + \frac{\Delta t}{\Rey}b_\strain(\vec{u}_h^{\ast},1) + \frac{2\Delta t}{3\Rey}b_\lambda(\vec{u}_h^{\ast},1).
\end{align}
Adding two equations \eqref{eq:conervation_proof_1} and \eqref{eq:conervation_proof_2}, with the fact that $a_{\mathcal{D}}(e_h^\ast,1) = 0$ and the identities $a_\strain(\vec{u}_h^{\ast},\vec{u}_h^{\ast}) = b_\strain(\vec{u}_h^{\ast},1)$ and $a_\lambda(\vec{u}_h^{\ast},\vec{u}_h^{\ast}) = b_\lambda(\vec{u}_h^{\ast},1)$,
we obtain
\begin{align*}
\langle\rho^{\mathrm{H}}_h e^{\mathrm{H}}_h, 1\rangle + \frac{1}{2}\langle\rho^{\mathrm{H}}_h\vec{u}^{\mathrm{H}}_h, \vec{u}_h^{\mathrm{H}}\rangle = \langle\rho^{\mathrm{P}}_h e^{\mathrm{P}}_h, 1\rangle + \frac{1}{2}\langle\rho^{\mathrm{P}}_h\vec{u}^{\mathrm{P}}_h, \vec{u}^{\mathrm{P}}_h\rangle.
\end{align*}
Therefore, we obtain $(E_h^{n},1) = (E_h^{n+1},1)$.
\end{proof}

\section{A globally conservative and positivity-preserving postprocessing procedure}\label{sec:limiter}

For  Runge–Kutta DG method solving the hyperbolic subproblem (H), i.e.,  compressible Euler equations, 
it is well understood that  the  simple Zhang--Shu limiter can preserve the positivity without destroying conservation and high order accuracy \cite{zhang2010positivity, zhang2017positivity}.
Let $S_h$ be the union of sets $S_K^{\mathrm{H, int}}$ and $S_K^{\mathrm{H, aux}}$ for all $K\in\setE_h$. By the results in \cite{zhang2010positivity, zhang2017positivity}, for Step 1 and Step 5 in the fully discrete scheme in Section \ref{sec:flowchart}, we have
\begin{enumerate}
\item The DG polynomial $\vec{U}_h^n(\vec{x}_q)\in G$ for all $\vec{x}_q\in S_h$ gives $\vec{U}_h^\mathrm{H}(\vec{x}_q)\in G$ for all $\vec{x}_q\in S_h$. 
\item If $\vec{U}_h^\mathrm{P}(\vec{x}_q)\in G$ for all $\vec{x}_q\in S_h$, then the DG polynomial $\vec{U}_h^{n+1}(\vec{x}_q)\in G$ for all $\vec{x}_q\in S_h$.
\end{enumerate}

Moreover, by \cite[Lemma 1]{liu2023positivity}, the $L^2$ projection step  
\eqref{eq:CNS:P_full_dis_L2proj1} in Step 2 does not affect the positivity, i.e., the  positivity of $e_h^\mathrm{H}$ is ensured if conserved variables are in the invariant domain. 
Therefore, in order to construct a conservative and positivity-preserving scheme, we only need to enforce $\vec{U}_h^\mathrm{P}(\vec{x}_q)\in G^\epsilon$ for all $\vec{x}_q\in S_h$ in Step~4 without affecting the global conservation in the fully discrete scheme in Section~\ref{sec:flowchart}.
\par
When using the backward Euler time discretization (e.g., $\theta=1$) in Step~3, positivity can be achieved if the discrete Laplacian is monotone \cite{liu2023positivity}. For example, the discrete Laplacian from $\IQ^1$ IIPG forms an M-matrix unconditionally. Moreover,  the monotonicity of $\IQ^k$ spectral element method (continuous finite element with Gauss--Lobatto quadrature) for $k= 1, 2, 3$ is proven in \cite{li2020monotonicity,cross2023monotonicityQ2,cross2023monotonicityQ3}, see also \cite{hu2023positivity,shen2021discrete, liu2024structure}, and such a result was used in
 \cite{liu2023positivity} for solving \eqref{eq:CNS:model}.
 
To improve the time accuracy, the Crank--Nicolson scheme with $\theta=\frac{1}{2}$ can be used in Step~3. However, in this case, a monotone system matrix no longer implies the positivity of internal energy, which poses a significant challenge, though positivity might still be ensured under a small time step $\Delta t=\mathcal O(\Rey \Delta{x}^2)$.
Instead, we consider a postprocessing procedure based on constraint optimization to ensure global conservation and positivity. The constraint optimization-based cell average limiter can be formulated as a nonsmooth convex minimization problem and efficiently solved by utilizing the generalized Douglas--Rachford splitting method \cite{liu2023simple}.

\subsection{A cell average postprocessing approach}
By Theorem \ref{thm:dis_momentum_conv}, the DG polynomial $\vec{U}_h^\mathrm{P}$ preserves the global conservation. But it may violate the positivity of internal energy. The following two-stage limiting strategy can be employed to enforce $\vec{U}_h^\mathrm{P}(\vec{x}_q)$ in the set of admissible states for any quadrature points $\vec{x}_q\in S_h$ without losing high order accuracy and conservation.
\begin{itemize}[leftmargin=0.5cm]
\item[] Step~1. Given $\vec{U}_h^\mathrm{P}$,  if any cell average has negative internal energy, then post process all cell averages of the total energy variable without losing global conservation such that each cell average of the DG polynomial $\vec{U}_h^\mathrm{P}$ stays in the admissible state set $G^\epsilon$.
\item[] Step~2. Apply the Zhang--Shu limiter to the postpocessed DG polynomial to ensure internal energy  at any quadrature points in $S_h$ is positive.
\end{itemize}

For a postprocessing procedure, minimal modifications to the original DG polynomial is often  preferred.
In our scheme, the density $\rho_h^\mathrm{P} = \rho_h^\mathrm{H}$ is already positive, ensured by a high order accurate positivity-preserving compressible Euler solver. Consider the scheme for solving the subproblem ($\mathrm{P}$), which is fully decoupled. The momentum $\vec{m}_h^\mathrm{P}$ or velocity $\vec{u}_h^\mathrm{P}$ is  stably approximated.
With the given $\rho_h^\mathrm{P}$ and $\vec{u}_h^\mathrm{P}$, when solving \eqref{eqn-internal-e}, which is a heat equation in the parabolic subproblem, a high order scheme may not preserve positivity in general.
To this end, we consider a simple approach by only post processing the total energy variable  $E_h^\mathrm{P}$ to enforce the positivity of internal energy, without losing conservation for $E_h^\mathrm{P}$.
\par
Let $K_i$ $(i=1,\cdots, N)$ be all the cells and $\overline{\vec{U}_i^\mathrm{P}} = \transpose{[\overline{\rho_i^\mathrm{P}},\overline{\vec{m}_i^\mathrm{P}}, \overline{E_i^\mathrm{P}}]}$ be a vector denoting the cell average of the DG polynomial $\overline{\vec{U}_h^\mathrm{P}}$ on the $i$-th cell $K_i$, namely $\overline{\vec{U}_i^\mathrm{P}} =\frac{1}{\abs{K_i}}\int_{K_i} \vec{U}_h^\mathrm{P}$.
 
Then we apply the globally conservative postprocessing procedure \eqref{postprocessing} only to the total energy DG polynomial such that the modified DG polynomials have good cell averages, which have positive internal energy.

\subsection{The accuracy of the postprocessing}
\label{sec:accuracy-pp}
It is obvious that the minimizer to \eqref{total-energy-opt} preserves the global conservation of total energy and the positivity of internal energy, since these two are the constraints. Next,  we discuss the accuracy of the postprocessing step \eqref{total-energy-opt}. 
\par
To understand how \eqref{total-energy-opt} affects accuracy,   consider evolving \eqref{eqn-internal-e} with given $\rho(\vec{x},t) = \rho^{\mathrm{P}}_h(\vec{x})$ and $\vec{u}(\vec{x},t)=\vec{u}_h^\ast(\vec{x})$, $\forall t$ by one time step in the Strang splitting \eqref{algorithm-splitting}, i.\,e., we consider the initial value problem
\begin{align}
\begin{cases} \rho_h^\mathrm{P} \partial_t e - \frac{\lambda}{\Rey}\laplace{e} = \frac{1}{\Rey}\vec{\tau}(\vec{u}_h^\ast):\grad{\vec{u}_h^\ast}, \quad t\in (t^n, t^n+\Delta t),\\
e(\vec{x}, t^n)=e^\mathrm{H}_h(\vec{x}).
\end{cases}
\label{e-PDE}
\end{align}
Due to the   inequality $\strain(\vec{u}):\strain(\vec{u})\geq\frac{1}{d}(\div{\vec{u}})^2$,  which can be easily verified by calculations (e.g., for $d=2$, $\strain(\vec{u}):\strain(\vec{u})\geq\frac{1}{2}(\div{\vec{u}})^2\Leftrightarrow  \frac14(u_x-u_y)^2+\frac12(u_x+u_y)^2\geq 0$), we know $\vec{\tau}(\vec{u}_h^\ast):\grad{\vec{u}_h^\ast}=2\big(\strain(\vec{u}_h^\ast):\strain(\vec{u}_h^\ast)-\frac{1}{3}(\div{\vec{u}_h^\ast})^2\big)\geq0$.
We mention that a similar property also holds for the interior penalty DG scheme at the discrete level, i.\,e., the right hand side of \eqref{eq:CNS:P_full_dis3} is also positive, see \cite[Lemma 3]{liu2023positivity}.  Let $e$ denote the exact solution to \eqref{e-PDE}. Since the right-hand side of \eqref{e-PDE} is non-negative, the exact solution to \eqref{e-PDE} with an initial condition $e^\mathrm{H}_h>0$ is positive, thus we assume $e(\vec{x},t)\geq \epsilon_2>0$.  
\par

Noticing that $\rho_h^\mathrm{P}$ is time independent, we have $\rho_h^\mathrm{P} \partial_t e = \partial_t (\rho_h^\mathrm{P} e)$. Integrating \eqref{e-PDE} over the spatial domain $\Omega$ and using boundary condition $\grad{e}\cdot\normal=0$, we get
\begin{align*}
\frac{\dd}{\dd t}\left(\int_\Omega \rho_h^\mathrm{P} e \dd\vec{x}\right)= \frac{1}{\Rey}\int_\Omega\vec{\tau}(\vec{u}_h^\ast):\grad{\vec{u}_h^\ast}\dd\vec{x}.
\end{align*}
Integrating the equation above over the time interval $[t^n, t^{n}+\Delta{t}]$, we have
\begin{align}
\label{e-integral}
 \int_\Omega \rho_h^\mathrm{P}(\vec{x}) e(\vec{x}, t^{n}+\Delta{t}) \dd\vec{x} = \int_\Omega \rho_h^\mathrm{H} e_h^\mathrm{H} \dd\vec{x} + \frac{\Delta{t}}{\Rey}\int_\Omega\vec{\tau}(\vec{u}_h^\ast):\grad{\vec{u}_h^\ast}\dd\vec{x}.
\end{align}
Consider the NIPG0 method for velocity, i.\,e., the NIPG method with zero penalty, which is the scheme \eqref{eq:CNS:P_full_dis3} we utilized in our numerical experiments. Recall $(k+1)^d$ Gauss--Lobatto quadrature is accurate for $(2k-1)$-order polynomial. Taking $\chi_h=1$ in \eqref{eq:CNS:P_full_dis3},  with \eqref{e-integral} and the quadrature error for integrals, we have
\begin{align*}
\int_\Omega \rho_h^\mathrm{P}(\vec{x}) e(\vec{x}, t^{n}+\Delta{t}) \dd\vec{x}= \langle\rho_h^\mathrm{P} e_h^\mathrm{P},1\rangle +Ch^{2k}.
\end{align*}
Let $  e_I(\vec{x})$ be the piecewise $\IQ^k$ interpolation polynomial of the exact solution $e(\vec{x}, t^n+\Delta t)$ at $(k+1)^d$ Gauss--Lobatto points at each cell.  We have
\begin{align*}
\langle\rho^\mathrm{P}_h  {e}_I, 1\rangle 
= \int_\Omega \rho_h^\mathrm{P}(\vec{x}) e(\vec{x}, t^{n}+\Delta{t}) \dd\vec{x} + Ch^{2k}
= \langle\rho^\mathrm{P}_h e_h^\mathrm{P},1\rangle +Ch^{2k}.
\end{align*}
Let $\tilde e_h(\vec{x}) = e_I(\vec{x})-\frac{C}{\langle\rho^\mathrm{P}_h,1\rangle} h^{2k}$, 
then $\tilde e_h(\vec{x})=e(\vec{x})+\mathcal O(h^{k+1})$ 
and $\langle \rho_h^{\mathrm{P}} \tilde e_h,1\rangle=\langle \rho_h^{\mathrm{P}} e^{\mathrm{P}}_h,1\rangle$. 
Define $(\vec{m}_h^\mathrm{P}, E^\mathrm{Interp}_h)\in \mathbf{X}_h^k\times M_h^k$ as an $L^2$ projection of $(\rho_h^{\mathrm{P}}, \vec{u}_h^{\mathrm{P}}, \tilde e_h)\in M_h^k\times \mathbf{X}_h^k\times M_h^k$:
\begin{align}
\langle\vec{m}_h^\mathrm{P},\vec{\theta}_h\rangle = \langle\rho_h^\mathrm{P}\vec{\vec{u}}_h^\mathrm{P},\vec{\theta}_h\rangle,~~
\forall \vec{\theta}_h \in \mathbf{X}_h^k
\quad\text{and}\quad
\langle E^\mathrm{Interp}_h,\chi_h\rangle = \langle \rho_h^{\mathrm{P}}\tilde  e_h,\chi_h\rangle + \langle\frac{\vec{m}_h^{\mathrm{P}}}{2\rho_h^\mathrm{P}},\vec{m}_h^{\mathrm{P}}\chi_h\rangle,~~ \forall \chi_h\in M_h^k.
\label{eq:CNS:P_full_dis_L2proj2-2}
\end{align}
Notice that $\vec{m}_h^\mathrm{P}$ in \eqref{eq:CNS:P_full_dis_L2proj2-2} is exactly the same as $\vec{m}_h^\mathrm{P}$ in \eqref{eq:CNS:P_full_dis_L2proj2}, and only $E^\mathrm{Interp}_h$ is different.


Let $\overline{E_i^\mathrm{Interp}}$ be the cell average of $E^\mathrm{Interp}_h$ at the $i$-th cell and $\overline{E_h^\mathrm{Interp}}=\transpose{[\overline{E_1^\mathrm{Interp}}, \overline{E_2^\mathrm{Interp}}, \cdots, \overline{E_N^\mathrm{Interp}}]}$.
Next, we verify that $\overline{E_h^\mathrm{Interp}}$ satisfies both constraints 
in \eqref{total-energy-opt}, when the mesh size $h$ is small. 
\begin{itemize}
\item First, by taking $\chi_h=1$ in \eqref{eq:CNS:P_full_dis_L2proj2} and \eqref{eq:CNS:P_full_dis_L2proj2-2}, we obtain the global conservation of total energy:
\[ \sum_{i=1}^N \overline{E_i^\mathrm{Interp}} |K_i| =\langle E^\mathrm{Interp}_h,1\rangle =\langle E^{\mathrm{P}}_h,1\rangle= \sum_{i=1}^N \overline{E_i^{\mathrm{P}}} |K_i|. \]
\item Second, for small enough $h$ such that $\frac{\abs{C}}{\langle\rho^\mathrm{P}_h,1\rangle} h^{2k}\leq  \frac12 \epsilon_2 $, we can take $\epsilon\leq  \frac12 \epsilon_2\rho_h^\mathrm{P}$ to have 
$$\left(\epsilon_2-\frac{|C|}{\langle\rho^\mathrm{P}_h,1\rangle} h^{2k}\right)\rho_h^\mathrm{P}\geq \frac12 \epsilon_2 \rho_h^\mathrm{P}\geq \epsilon.$$ 
\end{itemize}
Then following the proof of Lemma~2 in \cite[Section 3.2]{liu2023positivity}, we have 
\[\overline{E_i^\mathrm{Interp}} - \frac{1}{2}\frac{\|\overline{\vec{m}_i^{\mathrm{P}}}\|}{\overline{\rho^{\mathrm{P}}_i}}\geq \epsilon.\]
Since $\overline E_h^{\,\ast}$ is the minimizer to \eqref{total-energy-opt} and $\transpose{[\overline{\rho^{\mathrm{P}}}_i,\overline{\vec{m}^{\mathrm{P}}}_i,\overline{E_i^\mathrm{Interp}}]}$ satisfies the
constraints of \eqref{total-energy-opt}, we have
\begin{equation}
\left\| \overline E_h^{\,\ast}-\overline{E^\mathrm{Interp}_h}\right\| 
\leq \left\|\overline E_h^{\,\ast}-\overline{E_h^\mathrm{P}}\right\| + \left\|\overline{E_h^\mathrm{P}}-\overline{E^\mathrm{Interp}_h}\right\|
\leq 2 \left\|\overline{E_h^\mathrm{P}}-\overline{E^\mathrm{Interp}_h}\right\|. \label{eqn:totalenergy-error}
\end{equation}
\par
To summarize the discussion for accuracy, we conclude that the accuracy of the postprocessing \eqref{total-energy-opt} 
can be understood in the sense of \eqref{eqn:totalenergy-error}. In other words, if considering the error approximating the exact solution of \eqref{e-PDE} in Strang splitting, 
then the minimizer to \eqref{total-energy-opt} is not significantly worse than the DG solution $E^{\mathrm{P}}_h$.

\subsection{An efficient solver by Douglas--Rachford splitting with nearly optimal parameters}
\label{sec: DR-solver}
\par
The key computational issue here is how to solve \eqref{total-energy-opt} efficiently, and the same approach in \cite{liu2023simple} can be used. For completeness, we briefly describe the main algorithm and result in \cite{liu2023simple}. 
For convenience,
we rewrite the minimization problem \eqref{total-energy-opt} in matrix-vector form using different names for variables.

{\bf For simplicity, we only consider a uniform mesh with $|K_i|=h^d$}. Extensions to non-uniform meshes are straightforward. Thus we define a matrix $\vecc{A}=[1,1,\cdots,1]\in\IR^{1\times N}$, where $N$ is the total number of cells. A vector $\vec{w}\in\IR^{N}$ is introduced to store the cell averages of DG polynomial $E_h^\mathrm{P}$, namely the $i^\mathrm{th}$ entry of $\vec{w}$ equals $\overline{E_i^\mathrm{P}}$. 
Define the constant $b = \vecc{A}\vec{w}$, which is the summation of all cell averages.
The indicator function in constraint optimization is defined as $\iota_\Lambda$ for a set $\Lambda$: $\iota_\Lambda(\vec{x}) = 0$ if $\vec{x}\in \Lambda$ and $\iota_\Lambda(\vec{x}) = +\infty$ if $\vec{x}\notin \Lambda$. 
Then \eqref{total-energy-opt} is equivalent to the following minimization:
\begin{equation}\label{eq:opt_model2}
\min_{\vec{x}\in\IR^{N}}{\frac{\alpha}{2}\norm{\vec{x}-\vec{w}}{2}^2}
+ \iota_{\Lambda_1}(\vec{x})
+ \iota_{\Lambda_2}(\vec{x}).
\end{equation}
where
 $\alpha >0$ is a constant, and the conservation constraint and the positivity-preserving constraint give two sets 
\begin{align*}
\Lambda_1=\{\vec{x}: \vecc{A}\vec{x} = b\}
\quad\text{and}\quad
\Lambda_2=\{\vec{x}: x_i - \frac{\norm{\overline{\vec{m}}_i}{}^2}{2\overline{\rho}_i} \geq \epsilon,~ \forall i = 1,\, \cdots,\, N\}.
\end{align*}

Splitting algorithms naturally arise when solving minimization problem of the form $\min_{\vec{x}}f(\vec{x}) + g(\vec{x})$, where functions $f$ and $g$ are convex, lower semi-continuous (but not otherwise smooth), and have simple subdifferentials and resolvents.
Let $F = \partial{f}$ and $G = \partial{g}$ denote the subdifferentials of $f$ and $g$. Then, a sufficient and necessary condition for $\vec{x}$ being a minimizer is $\vec{0}\in F(\vec{x}) + G(\vec{x})$.
The resolvents $\mathrm{J}_{\gamma F} = (\mathrm{I}+\gamma F)^{-1}$ and $\mathrm{J}_{\gamma G} = (\mathrm{I}+\gamma G)^{-1}$ are also called proximal operators, as $\mathrm{J}_{\gamma F}$ maps $\vec{x}$ to $\mathrm{argmin}_{\vec{z}} \gamma f(\vec{z}) + \frac{1}{2}\norm{\vec{z}-\vec{x}}{2}^2$ and $\mathrm{J}_{\gamma G}$ is defined similarly. 
The reflection operators are defined as $\mathrm{R}_{\gamma F} = 2\mathrm{J}_{\gamma F} - \mathrm{I}$ and $\mathrm{R}_{\gamma G} = 2\mathrm{J}_{\gamma G} - \mathrm{I}$, where $\mathrm{I}$ is the identity operator.
\par
The generalized Douglas--Rachford splitting method for solving the minimization problem $\min_{\vec{x}}f(\vec{x}) + g(\vec{x})$ can be written as: 
\begin{equation}\label{eq:DR_algorithm}
\begin{cases}\displaystyle
\vec{y}^{k+1} 
= \lambda\frac{\mathrm{R}_{\gamma F}\mathrm{R}_{\gamma G} + \mathrm{I}}{2} \vec{y}^k + (1-\lambda) \vec{y}^k, \\
\vec{x}^{k+1} = \mathrm{J}_{\gamma G}(\vec{y}^{k+1}),
\end{cases}
\end{equation}
where $\vec{y}$ is an auxiliary variable, $\lambda$ belongs to $(0,2]$ is a parameter, and $\gamma>0$ is step size. 
We get the Douglas--Rachford splitting when $\lambda=1$ in \eqref{eq:DR_algorithm}. In the limiting case, $\lambda=2$ is the Peaceman--Rachford splitting.
For two convex functions $f(\vec x)$ and $g(\vec x)$, the sequence in \eqref{eq:DR_algorithm} converges for any positive step size $\gamma$ and any fixed $\lambda\in(0,2)$, see \cite{lions1979splitting}. If one function is strongly convex, then $\lambda=2$ also leads to convergence.
Using the definition of reflection operators, \eqref{eq:DR_algorithm} can be expressed as follows:
\begin{equation}\label{eq:DR_algorithm1}
\begin{cases}\displaystyle
\vec{y}^{k+1} 
= \lambda\mathrm{J}_{\gamma F}(2\vec{x}^k - \vec{y}^k) + \vec{y}^k - \lambda\vec{x}^k, \\
\vec{x}^{k+1} = \mathrm{J}_{\gamma G}(\vec{y}^{k+1}).
\end{cases}
\end{equation}
\par
We split the objective function in \eqref{eq:opt_model2} into 
\begin{align*}
f(\vec{x}) = \frac{\alpha}{2}\norm{\vec{x}-\vec{w}}{}^2 + \iota_{\Lambda_1}(\vec{x})
\quad\text{and}\quad
g(\vec{x}) = \iota_{\Lambda_2}(\vec{x}).
\end{align*}
Linearity implies that the set $\Lambda_1$ is convex. With ideal gas equation of state, the function $\rho e$ is concave, see \cite{zhang2010positivity, zhang2017positivity} and references therein. Thus, by Jensen's inequality, the set $\Lambda_2$ is also convex. Therefore, the function $f$ is strongly convex and the function $g$ is convex, given that \eqref{eq:DR_algorithm1} converges to the unique minimizer. 
After applying \eqref{eq:DR_algorithm1} to solve the minimization to machine precision, the positivity constraint is strictly satisfied and the conservation constraint is enforced up to round-off error.
The subdifferentials and the associated resolvents are given as follows:
\begin{itemize}
\item The subdifferential of function $f$ is 
\begin{align*}
\partial{f}(\vec{x}) = \alpha(\vec{x} - \vec{w}) + \mathcal{R}(\transpose{\vecc{A}}),
\end{align*}
where $\mathcal{R}(\transpose{\vecc{A}})$ denotes the range of the matrix $\transpose{\vecc{A}}$.
\item The subdifferential of function $g$ is 
\begin{align*}
[\partial{g}(\vec{x})]_i =
\begin{cases}
0,           & \text{if}~ x_i > \frac{\norm{\overline{\vec{m}}_i}{}^2}{2\overline{\rho}_i} + \epsilon,\\
[-\infty,0], & \text{if}~ x_i = \frac{\norm{\overline{\vec{m}}_i}{}^2}{2\overline{\rho}_i} + \epsilon.
\end{cases}
\end{align*}
\item For the function $f(\vec{x}) = \frac{\alpha}{2}\norm{\vec{x}-\vec{w}}{2}^2 + \iota_{\Lambda_1}(\vec{x})$, the associated resolvent is
\begin{align}\label{eq:resolvent_F}
\mathrm{J}_{\gamma F}(\vec{x}) 
= \frac{1}{\gamma\alpha+1}\big(\vecc{A}^+(b - \vecc{A}\vec{x}) + \vec{x}\big) + \frac{\gamma\alpha}{\gamma\alpha+1}\vec{w},
\end{align}
where $\vecc{A}^{+} = \transpose{\vecc{A}}(\vecc{A}\transpose{\vecc{A}})^{-1}$ denotes the pseudo inverse of the matrix $\vecc{A}$. 
\item For the function $g(\vec{x}) = \iota_{\Lambda_2}(\vec{x})$, the associated resolvent is $\mathrm{J}_{\gamma G}(\vec{x}) = \mathrm{S}(\vec{x})$, where $\mathrm{S}$ is a cut-off operator defined by 
\begin{align}\label{eq:resolvent_G}
[\mathrm{S}(\vec{x})]_i = \max{\Big(x_i, \frac{\norm{\overline{\vec{m}}_i}{}^2}{2\overline{\rho}_i} + \epsilon\Big)},\quad \forall i = 1, \cdots, N.
\end{align}
\end{itemize}
Define parameter $c = \frac{1}{\gamma\alpha+1}$, which gives $\frac{\gamma\alpha}{\gamma\alpha+1} = 1-c$. Using the expressions of resolvents in \eqref{eq:resolvent_F} and \eqref{eq:resolvent_G}, we obtain the generalized Douglas--Rachford splitting method for solving the minimization problem \eqref{eq:opt_model2} in matrix-vector form:
\begin{align}\label{eq:DR_algorithm2}
\begin{cases}\displaystyle
\vec{z}^k = 2\vec{x}^k - \vec{y}^k,\\
\vec{y}^{k+1} = \lambda c \big(\vecc{A}^+(b - \vecc{A}\vec{z}^k) + \vec{z}^k\big) + \lambda(1-c) \vec{w} + \vec{y}^k - \lambda\vec{x}^k, \\
\vec{x}^{k+1} = \mathrm{S}(\vec{y}^{k+1}).
\end{cases}
\end{align}
As a brief summary, after obtaining the DG polynomial $E_h^\mathrm{P}$, compute cell averages to generate vector $\vec{w}$, where the $i^\mathrm{th}$ entry of $\vec{w}$ equals $\overline{E_h^\mathrm{P}}|_{K_i}$, then our cell average limiter can be implemented as follows. 
\begin{itemize}[leftmargin=0.5cm]
\item[] {\bf Algorithm~DR}. To start the generalized Douglas--Rachford iteration, set $\vec{y}^0 = \vec{w}$, $\vec{x}^0 = \mathrm{S}(\vec{w})$, and $k=0$. Compute parameters $c$ and $\lambda$ by using formula in Remark~\ref{opt-parameter}, and select a small $\epsilon$ for numerical tolerance of the conservation error.
\item[] Step~1. Compute intermediate variable $\vec{z}^k = 2\vec{x}^k - \vec{y}^k$.
\item[] Step~2. Compute auxiliary variable $\vec{y}^{k+1} = \lambda c \big(\vecc{A}^+(b - \vecc{A}\vec{z}^k) + \vec{z}^k\big) + \lambda(1-c) \vec{w} + \vec{y}^k - \lambda\vec{x}^k$.
\item[] Step~3. Compute $\vec{x}^{k+1} = \mathrm{S}(\vec{y}^{k+1})$.
\item[] Step~4.  It is convenient to employ the norm $\norm{\cdot}{h} = h^{d/2}\norm{\cdot}{}$ to measure the conservation error, which is an approximation to the $L^2$-norm. If stopping criterion $\norm{\vec{y}^{k+1} - \vec{y}^k}{h} < \epsilon$ is satisfied, then terminate and output $\vec{x}^\ast = \vec{x}^{k+1}$, otherwise set $k\leftarrow k+1$ and go to Step~1.
\end{itemize}
In the algorithm above,  $2\vec{x}^k$ can be regarded as $\vec{x}^k+\vec{x}^k$; the $\lambda(1-c)\vec w$ remains unchanged during iteration; and each entry of $\vecc{A}^+(b - \vecc{A}\vec{z}^k) + \vec{z}^k$ can be computed by $z^k_i + \frac{1}{N}(b-\sum_iz_i^k)$, thus if only counting number of computing multiplications and taking maximum, the computational complexity of each iteration is $3N+1$.
\begin{remark}\label{opt-parameter}
The analysis in \cite{liu2023simple} proves the asymptotic linear convergence and suggests a simple choice of nearly optimal parameters $c$ and $\lambda$ in \eqref{eq:DR_algorithm2}. Let $\hat r$ be the number of bad cells defined by $\overline{\vec{U}_i^\mathrm{P}}\notin G^\epsilon$ and let $\hat \theta=\cos^{-1}\sqrt{\frac{\hat r}{N}}$, then we have:
\begin{align}
\label{DR_parameter}
\begin{cases}
c=\frac{1}{2},~ \lambda = \frac{4}{2-\cos{(2\hat \theta)}}, &\quad \mbox{if } \hat \theta \in(\frac{3}{8}\pi,\frac{1}{2}\pi],\\
c=\frac{1}{(\cos\hat\theta + \sin\hat\theta)^2},~ \lambda =\frac{2}{1+\frac{1}{1+\cot\hat\theta}-\frac{1}{(\cos\hat\theta + \sin\hat\theta)^2}}, &\quad \mbox{if } \hat \theta \in(\frac{1}{4}\pi,\frac{3}{8}\pi],\\
c=\frac{1}{(\cos\hat\theta + \sin\hat\theta)^2},~ \lambda =2,  &\quad \mbox{if } \hat \theta \in(0,\frac{1}{4}\pi].
\end{cases}
\end{align}
\end{remark}
\begin{remark}
\label{rmk-3}
By splitting the   Navier-Stokes system into the Euler system and a parabolic system,
to preserve positivity, we may postprocess  only  a scalar variable, i.e, the total energy, which is the main advantage of the splitting approach. It is also possible to postprocess the DG solutions to the convection-diffusion Navier-Stokes system, by a similar Douglas--Rachford cell average limiter to preserve the invariant domain $G^\epsilon$, for which the operator $\mathrm{J}_{\gamma G}$ in \eqref{eq:DR_algorithm} becomes the projection to admissible set $G^\epsilon$.  
\end{remark}
\begin{remark}
\label{rmk-4}
Compared to other alternative methods for solving \eqref{eq:opt_model2} such as  breakpoint searching algorithms \cite{kiwiel2008breakpoint} and the method of Lagrangian multipliers in the Appendix, the  Douglas--Rachford algorithm \eqref{eq:DR_algorithm} is more flexible for other minimization models such as replacing the $\ell^2$-norm in \eqref{eq:opt_model2} by the $\ell^1$-norm, for which the Shrinkage operator would appear in  $\mathrm{J}_{\gamma F}$.
\end{remark}

\subsection{Implementation}
We provide details on implementing our scheme. 
The time-stepping strategy employed to solve subproblem ($\mathrm{H}$) is identical to the one described in Section~3.2 of \cite{wang2012robust}. For the sake of completeness, we include a list of the steps below.
\begin{itemize}[leftmargin=0.5cm]
\item[] {\bf Algorithm~H}. At time $t^n$, select a trial hyperbolic step size $\Delta{t}^\mathrm{H}$. The parameter $\epsilon$ is a prescribed small positive number for numerical admissible state set $G^\epsilon$.
The input DG polynomial $\vec{U}_h^n$ satisfies $\vec{U}_h^n(\vec{x}_q)\in G^\epsilon$, for all $\vec{x}_q\in S_h$. 
\item[] Step~H1. Given DG polynomial $\vec{U}_h^n$, compute the first stage to obtain $\vec{U}_h^{(1)}$.
\begin{itemize}[leftmargin=1.0cm, label=\labelitemi]
\item If the cell averages $\overline{\vec{U}}_K^{(1)} \in G^\epsilon$, for all $K\in\setE_h$, then apply Zhang--Shu limiter described in Section~\ref{sec:simplelimiter} to obtain $\widetilde{\vec{U}}_h^{(1)}$ and go to Step H2.
\item Otherwise, recompute the first stage with halved step size $\Delta{t}^\mathrm{H} \leftarrow \frac{1}{2}\Delta{t}^\mathrm{H}$. Notice, when $\Delta{t}^\mathrm{H}$ satisfies the positivity-preserving hyperbolic CFL proven in \cite{zhang2010positivity} (see also \cite{zhang2017positivity}), the $\overline{\vec{U}}_K^{(1)} \in G^\epsilon$ is guaranteed.
\end{itemize}
\item[] Step~H2. Given DG polynomial $\widetilde{\vec{U}}_h^{(1)}$, compute the second stage to obtain $\vec{U}_h^{(2)}$.
\begin{itemize}[leftmargin=1.0cm, label=\labelitemi]
\item If the cell averages $\overline{\vec{U}}_K^{(2)} \in G^\epsilon$, for all $K\in\setE_h$, then apply Zhang--Shu limiter to obtain $\widetilde{\vec{U}}_h^{(2)}$ and go to Step H3.
\item Otherwise, return to Step H1 and restart the computation with halved step size $\Delta{t}^\mathrm{H} \leftarrow \frac{1}{2}\Delta{t}^\mathrm{H}$. Notice that the results proven in   \cite{zhang2010positivity} ensure that there is not an infinite restarting loop, see \cite{zhang2017positivity}.
\end{itemize}
\item[] Step~H3. Given DG polynomial $\widetilde{\vec{U}}_h^{(2)}$, compute the third stage to obtain $\vec{U}_h^{(3)}$.
\begin{itemize}[leftmargin=1.0cm, label=\labelitemi]
\item If the cell averages $\overline{\vec{U}}_K^{(3)} \in G^\epsilon$, for all $K\in\setE_h$, then apply Zhang--Shu limiter to obtain $\vec{U}_h^{\mathrm{H}}$. We finish the current SSP Runge--Kutta.
\item Otherwise, return to Step H1 and restart the computation with halved step size $\Delta{t}^\mathrm{H} \leftarrow \frac{1}{2}\Delta{t}^\mathrm{H}$.  Notice that the results proven in   \cite{zhang2010positivity} ensure that there is not an infinite restarting loop, see \cite{zhang2017positivity}.
\end{itemize}
\end{itemize}
The time-stepping strategy for solving the compressible NS equations is as follows. The initial condition $\vec{U}_h^0$ is constructed by $L^2$ projection of $\vec{U}^0$ with Zhang--Shu limiter on $S_h$, e.g., we have $\vec{U}_h^0(\vec{x}_q)\in G^\epsilon$, for all $\vec{x}_q\in S_h$.
\begin{itemize}[leftmargin=0.5cm]
\item[] {\bf Algorithm~CNS}. At time $t^n$, select a desired time step size $\Delta{t}$. The parameter $\epsilon$ is a prescribed small positive number for numerical admissible state set $G^\epsilon$. The input DG polynomial $\vec{U}_h^n$ satisfies $\vec{U}_h^n(\vec{x}_q)\in G^\epsilon$, for all $\vec{x}_q\in S_h$.
\item[] Step~CNS1. Given DG polynomial $\vec{U}_h^n$, solve subproblem $(\mathrm{H})$ form time $t^n$ to $t^{n} + \frac{\Delta t}{2}$.
\begin{itemize}[leftmargin=1.0cm, label=\labelitemi]
\item Set $m=0$. Let $t^{n,0} = t^{n}$ and $\vec{U}_h^{n,0} = \vec{U}_h^n$.
\item Given $\vec{U}_h^{n,m}$ at time $t^{n,m}$, solve $(\mathrm{H})$ to compute $\vec{U}_h^{n,m+1}$ by the Algorithm~H. Let $t^{n,m+1} = t^{n,m} + \Delta{t}^\mathrm{H}$. 
If $t^{n,m+1} = t^{n} + \frac{\Delta t}{2}$, then apply Zhang--Shu limiter for $\vec{U}_h^{n,m+1}$ on all Gauss--Lobatto points in $S^\mathrm{P}_K$, for all $K\in\setE_h$, we obtain $\vec{U}_h^\mathrm{H}$. Go to Step~CNS2. Otherwise, set $m\leftarrow m+1$ and repeat solving $(\mathrm{H})$ by Algorithm~H until reaching $t^{n} + \frac{\Delta t}{2}$. 
Let $L$ be the smallest integer satisfying $2L-3\geq k$ for $\IQ^k$ basis, when using  $\IQ^k$ DG method to compute $\vec{U}_h^{n,m+1}$, we can take 
\begin{align*}
\Delta{t}^\mathrm{H} = \min\left\{a\frac{1}{\max_e \alpha_e}\frac{1}{L(L-1)}\Delta{x},~ t^{n} + \frac{\Delta t}{2}-t^{n,m}\right\}
\end{align*}
as a trial hyperbolic step size to start Algorithm~H. We refer to \cite{zhang2017positivity} for choosing the value of parameter $a$ on above.
\end{itemize}
Step~CNS2. Given DG polynomial $\vec{U}_h^\mathrm{H}$, take $L^2$ projection to compute $(\vec{u}_h^\mathrm{H}, e_h^\mathrm{H})$.
\item[] Step~CNS3. Given DG polynomials $(\rho_h^\mathrm{H}, \vec{u}_h^\mathrm{H}, e_h^\mathrm{H})$, solve subproblem $(\mathrm{P})$ form time $t^n$ to $t^{n} + \Delta t$. 
\item[] Step~CNS4. Given DG polynomials $(\rho_h^\mathrm{P}, \vec{u}_h^\mathrm{P}, e_h^\mathrm{P})$, take $L^2$ projection to compute $\vec{U}_h^\mathrm{P}$.
\begin{itemize}[leftmargin=1.0cm, label=\labelitemi]
\item  Notice that the postprocessing \eqref{postprocessing} can be applied to either the whole computational domain or a large enough local region containing negative cells. When possible, first define a local region of trouble cells defined by $\overline{\vec{U}_i^\mathrm{P}}\notin G^\epsilon$. Let $T\subseteq\{1,2,\cdots, N\}$
be the indices of the local region containing all cells with negative averages $\overline{\vec{U}_i^\mathrm{P}}\notin G^\epsilon$, 
and let $|T|$ be the number of cells in the local region marked by indices in the set $T$. Then the postprocessing on the local region is given by
\begin{subequations}
    \label{postprocessing-local}
\begin{align}
\label{total-energy-opt-2}
\min_{\overline E_i}
\sum_{i\in T}\left|\overline E_i-\overline{E_i^\mathrm{P}}\right|^2 
~\text{subjects to}~ 
\sum_{i\in T} \overline E_i |K_i| = \sum_{i\in T} \overline{E_i^\mathrm{P}} |K_i|
~~~\text{and}~~~
\transpose{[\overline{\rho_i^\mathrm{P}},\overline{\vec{m}_i^\mathrm{P}}, \overline E_i]}\!\! \in G^\epsilon,~ \forall i \in T.
\end{align}
Let $\overline E_h^{\,\ast}=\transpose{[\overline E_1^{\,\ast}, \cdots, \overline E_N^{\,\ast}]}$ be the minimizer. Then  we correct the DG polynomial cell averages for the total energy variable by a constant
\begin{equation}\label{update-average-limiter-2}
E_i(\vec{x})=E_i^\mathrm{P}(\vec{x})-\overline{E^\mathrm{P}_i}+\overline E_i^\ast,\quad \forall i\in T.
\end{equation} 
\end{subequations}
Notice that $T$ cannot contain only the negative cells, which will cause the feasible set in \eqref{total-energy-opt-2} to be empty, i.e., it is impossible to modify only negative cells to achieve positivity, without affecting conservation. If it is difficult to define such a set $T$, we can simply take $T=\{1,2,\cdots, N\}$, i.e., the whole computational domain. For certain problems, it is straightforward to define a proper $T$, see the remark below. 

\item  Solve \eqref{total-energy-opt-2} for  the region defined by indices in $T$ by the Douglas--Rachford splitting algorithm \eqref{eq:DR_algorithm2} with nearly optimal parameters \eqref{DR_parameter}  using  $\hat\theta=\cos^{-1}\sqrt{\frac{\hat r}{|T|}}$.
Then update or postprocess the cell averages of the DG polynomial $\vec{U}_h^\mathrm{P}$
by  \eqref{update-average-limiter-2}.   
\item With positive cell averages $\overline{\vec{U}_i^\mathrm{P}}\in G^\epsilon$ ensured by the postprocessing step \eqref{postprocessing}, we can apply the Zhang--Shu limiter to $\vec{U}_h^\mathrm{P}$ to ensure positivity on all points in $S_h$.
\end{itemize}
\item[] Step~CNS5. Given DG polynomial $\vec{U}_h^\mathrm{P}$, use adaptive time-stepping strategy to solve subproblem $(\mathrm{H})$ form time $t^n + \frac{\Delta t}{2}$ to $t^{n} + \Delta{t}$.
\end{itemize}

\begin{remark}
For the sake of robustness and efficiency, whenever possible, one should apply the postprocessing \eqref{postprocessing-local} to a subset of cells (i.e., $T$ is a strict subset of $\{1,2,\cdots, N\}$)  containing all trouble cells and also some good cells,  rather than the whole computational domain (i.e., $T=\{1,2,\cdots, N\}$). 
For example, in the 2D Sedov blast wave test in Section~\ref{sec:numercal_experiment:Sedov}, the initial total energy is $10^{-12}$ everywhere except in the cell at the lower left corner, and we can define $T$ as
\begin{equation}
\label{definition-T}
T = \left\{i: \mathrm{either}\quad  \overline{\vec{U}_i^\mathrm{P}}\notin G^\epsilon \quad \mathrm{or}\quad \overline{E^\mathrm{P}_i}-\frac{1}{2}\|\overline{\vec{m}^\mathrm{P}_i}\|/\overline{\rho^\mathrm{P}_i} \geq 10^{-10}\right \}.
\end{equation}
By such a definition of $T$ for each time step, the gray region in the Figure~\ref{fig:sedov_bad_cell_asymp_rate} will not be modified by the postprocessing.
Note, the number of cells contained in $T$ may various at each time step.
%
\begin{figure}[ht!]
\begin{center}
\begin{tabularx}{0.875\linewidth}{@{}c@{~~}c@{\quad\quad}c@{}}
\includegraphics[width=0.225\textwidth]{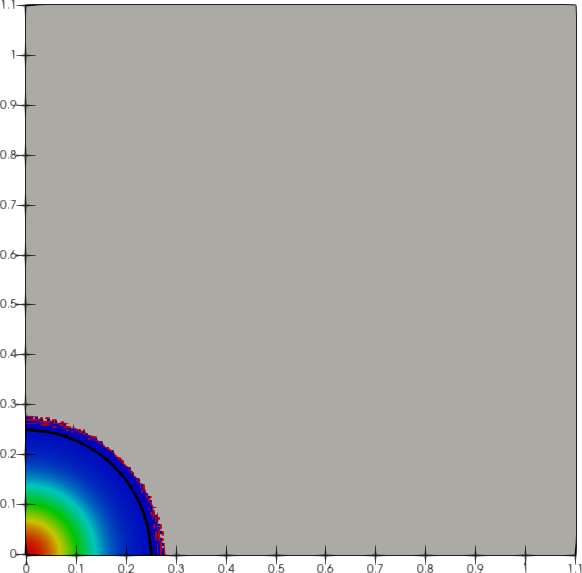} &
\includegraphics[width=0.2275\textwidth]{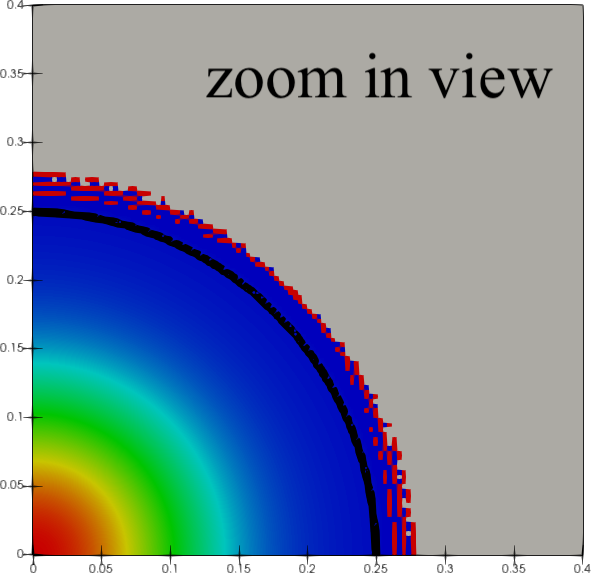} &
\includegraphics[width=0.35\textwidth]{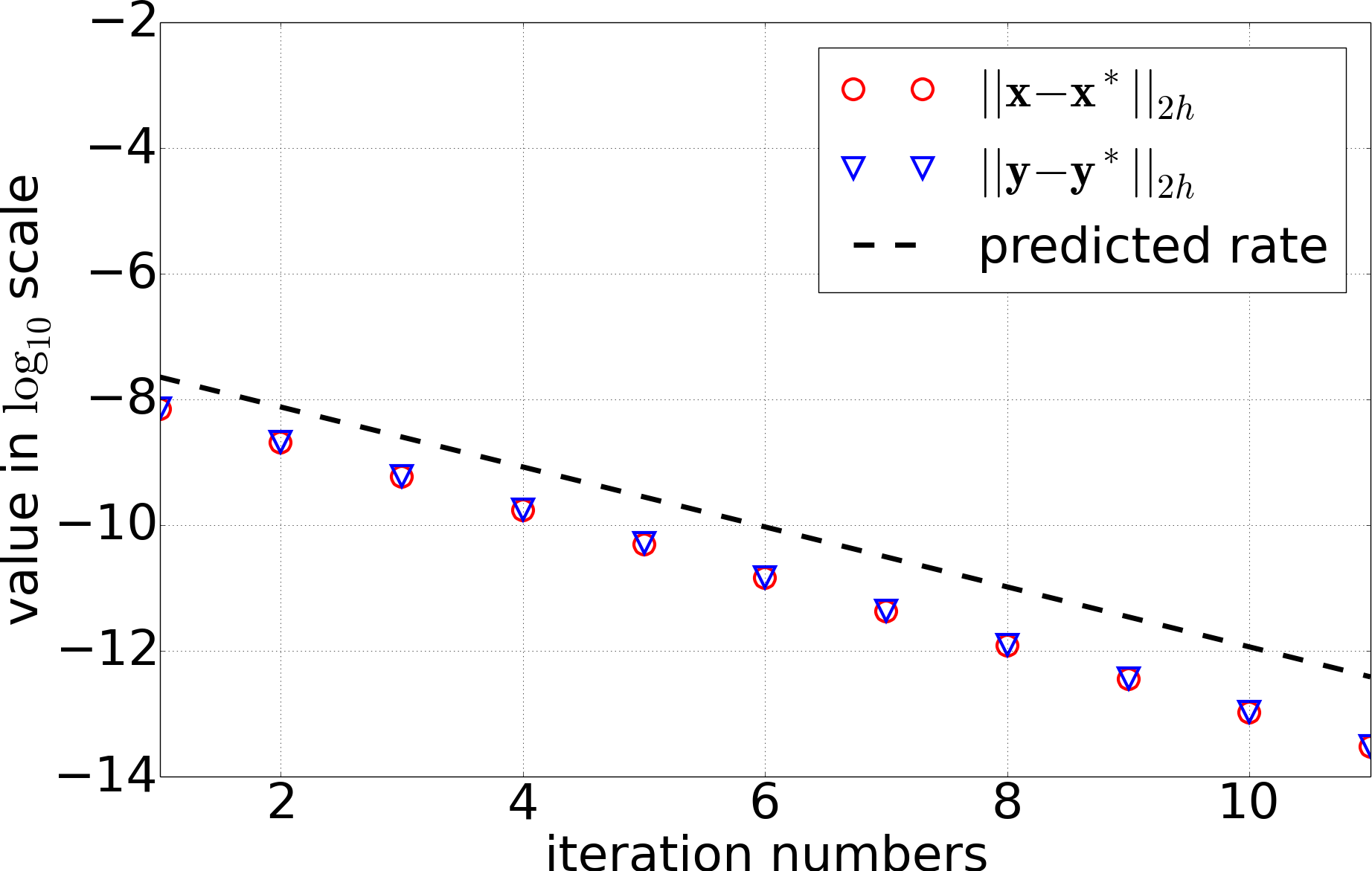} \\
\end{tabularx}
\caption{DG with $\IQ^2$ basis for 2D Sedov blast wave test. The middle figure is the zoom view of the left figure: the shock is marked black; the negative cells are highlighted by  the red marks; by the definition \eqref{definition-T}, $T$ does not include cells in the gray region in which the exact solution is supposed to be a constant. 
Right: the actual convergence rate of the Douglas--Rachford splitting algorithm \eqref{eq:DR_algorithm2} with nearly optimal parameters \eqref{DR_parameter} for solving \eqref{total-energy-opt-2} for the 2D Sedov problem (at one particular time step for the left figure) matches well the predicated rate from analysis (asymptotic linear convergence from analysis using the estimated principle angle $\hat\theta=\cos^{-1}\sqrt{\frac{\hat r}{|T|}}$), see \cite{liu2023simple} for more details on such a provable convergence rate.}
\label{fig:sedov_bad_cell_asymp_rate}
\end{center}
\end{figure}
\end{remark}

\section{Numerical experiments}\label{sec:numercal_experiment}

In this section, we validate our full numerical scheme through representative two-dimensional benchmark tests, including the Lax shock tube, double rarefraction, Sedov blast wave, shock diffraction, shock reflection-diffraction, and high Mach number astrophysical jet problems.
\par
For penalty parameters in interior penalty DG method for solving ($\mathrm{P}$), in the $\IQ^1$ scheme, we set $\sigma = 2$ on $\Gamma_h$, $\sigma = 4$ on $\partial{\Omega}$, and $\tilde{\sigma} = 2$; in the $\IQ^k$ ($k\geq2$) schemes, we set $\sigma = 0$ on all faces, namely using NIPG0 method for the velocity, and $\tilde{\sigma} = 2^k$ for the internal energy.
We take $\epsilon = 10^{-13}$ as the lower bound for the numerical admissible state set in all tests except the astrophysical jet simulations, where $\epsilon = 10^{-8}$ is used.
The ideal gas constant is $\gamma = 1.4$ and the Prandtl number is $\Pr = 0.72$.
{\bf The Reynolds number for all tests is $\Rey = 1000$ unless otherwise specified}.
\par 
In all physical simulations, we use $\theta = \frac{1}{2}$ in \eqref{eq:time_discretization}, namely utilizing the second order Crank--Nicolson method to solve ($\mathrm{P}$). The postprocessing step for total energy variable after solving ($\mathrm{P}$) is only triggered in the accuracy test in Section \ref{sec:test2}, the Sedov blast wave test, and astrophysical jets test.

\subsection{Accuracy tests}
We verify the order of accuracy of our numerical scheme by utilizing the method of manufactured smooth solutions. Let the computational domain $\Omega = [0,1]^2$ and select the end time $T = 0.1024$. The prescribed non-polynomial solutions are as follows:
\begin{align*}
\rho &= \exp{(-t)} \sin{2\pi (x+y)} + 2,\\
\vec{u} &= 
\begin{bmatrix}
\exp{(-t)} \cos{(2\pi x)} \sin{(2\pi y)} + 2\\
\exp{(-t)} \sin{(2\pi x)} \cos{(2\pi y)} + 2
\end{bmatrix},\\
e &= \frac{1}{2}\exp{(-t)} \cos{(2\pi (x+y))} + 1.
\end{align*}
Taking Reynolds number $\Rey=1$ and parameter $\lambda = 1$ in \eqref{eq:CNS:model}, the boundary conditions and the right-hand side of the compressible NS equations are computed by above manufactured solutions.
Define the discrete $L^2_h$ error of density by
\begin{align*}
\norm{\rho_h^{n}-\rho(t^n)}{L^2_h}^2 = {\Delta x}^2 \sum_{i=1}^{N} \sum_{\nu=1}^{N_\mathrm{q}^\mathrm{H, vol}}\omega_\nu \Big|\sum_{j=1}^{\Nloc}\rho_{ij}^n\,\hat{\varphi}_j(\hat{\vec{q}}_\nu) - \rho(t^n)\circ \vec{F}_i(\hat{\vec{q}}_\nu)\Big|^2,
\end{align*}
where $\omega_\nu$ and $\hat{\vec{q}}_\nu$ are the Gauss quadrature weights and points used in evaluating volume integrals in ($\mathrm{H}$).
The discrete $L^2_h$ errors for momentum and total energy are measured similarly.
In addition, the discrete $L^2_h$ for $\vec{U}_h^n$ is defined by
\begin{align*}
\norm{\vec{U}_h^n-\vec{U}(t^n)}{L^2_h}^2 = \norm{\rho_h^n-\rho(t^n)}{L^2_h}^2 + \norm{\vec{m}_h^n-\vec{m}(t^n)}{L^2_h}^2 + \norm{E_h^n-E(t^n)}{L^2_h}^2.
\end{align*}
If $\mathtt{err}_{\Delta x}$ denotes the error on a mesh with resolution $\Delta x$, then the rate is given by $\ln(\mathtt{err}_{\Delta x}/\mathtt{err}_{\Delta x/2})/\ln{2}$.
\par
For temporal convergence rate tests, we use $\IQ^3$ scheme and fix the mesh resolution $\Delta x = 1/64$ small enough such that the time error dominates. 
We choose NIPG method with $\sigma = 0$ to solve the second equation in subproblem ($\mathrm{P}$) and choose IIPG method with $\tilde\sigma = 8$ to solve the third equation in subproblem ($\mathrm{P}$). We observe the optimal temporal convergence rates, see Table~\ref{tab:accuravy_test_time}.
\par
For spatial convergence rate tests, we use $\theta=\frac{1}{2}$ and fix time step size $\Delta t = 3.125\times10^{-6}$ small enough such that the spatial error dominates and the hyperbolic CFL is satisfied. 
We choose NIPG method with $\sigma = 2$ on $\Gammah$ and $\sigma = 4$ on $\partial{\Omega}$ for $\IQ^1$ scheme; and $\sigma = 0$ for $\IQ^k$ ($k \geq 2$) scheme to solve the second equation in subproblem ($\mathrm{P}$). We choose IIPG method with $\tilde\sigma = 2^k$ to solve the third equation in subproblem ($\mathrm{P}$).
For $\IQ^1$, $\IQ^3$, and $\IQ^5$ schemes, we obtain the optimal spatial convergence rates, see Table~\ref{tab:accuravy_test_space}. For $\IQ^2$ and $\IQ^4$ schemes, the convergence is suboptimal, which is as expected, since the NIPG and IIPG methods are suboptimal for even order spaces.
\begin{table}[ht!]
\centering
\begin{tabularx}{\linewidth}{@{~~}c@{~~}|c@{~}C@{~~}|c@{~}C@{~}|c@{~~}|c@{~}C@{~}|c@{~~}}
\toprule
$\theta$ & $\Delta t$ & $\|\vec{U}_h^{N_T}-\vec{U}(T)\|_{L^2_h}$ & $\Delta t$ & $\|\vec{U}_h^{N_T}-\vec{U}(T)\|_{L^2_h}$ & rate & $\Delta t$ & $\|\vec{U}_h^{N_T}-\vec{U}(T)\|_{L^2_h}$ & rate \\
\midrule
1 & $4\cdot10^{-4}$ & $1.599\cdot10^{-2}$ & $2\cdot10^{-4}$ & $7.988\cdot10^{-3}$ & 1.001 & $1\cdot10^{-4}$ & $3.997\cdot10^{-3}$ & 0.999 \\
\midrule
$\frac{1}{2}$ & $4\cdot10^{-4}$ & $1.393\cdot10^{-3}$ & $2\cdot10^{-4}$ & $3.601\cdot10^{-4}$ & 1.952 & $1\cdot10^{-4}$ & $9.140\cdot10^{-5}$ & 1.978 \\
\bottomrule
\end{tabularx}
\caption{Test of accuracy. The temporal error and convergence rates. $\theta=1$ backward Euler scheme for internal energy in subproblem ($\mathrm{P}$). $\theta=\frac{1}{2}$ Crank--Nicolson scheme for internal energy in subproblem ($\mathrm{P}$).}
\label{tab:accuravy_test_time}
\end{table}
\begin{table}[ht!]
\centering
\begin{tabularx}{\linewidth}{@{~~}c@{~~}|c@{~}C@{~~}|c@{~}C@{~}|c@{~~}|c@{~}C@{~}|c@{~~}}
\toprule
$k$ & $\Delta x$ & $\|\vec{U}_h^{N_T}-\vec{U}(T)\|_{L^2_h}$ & $\Delta x$ & $\|\vec{U}_h^{N_T}-\vec{U}(T)\|_{L^2_h}$ & rate & $\Delta x$ & $\|\vec{U}_h^{N_T}-\vec{U}(T)\|_{L^2_h}$ & rate \\
\midrule
1 & $1/2^4$ & $1.209\cdot10^{-1}$ & $1/2^5$ & $3.071\cdot10^{-2}$ & 1.977 & $1/2^6$ & $7.728\cdot10^{-3}$ & 1.991 \\
\midrule
2 & $1/2^4$ & $5.116\cdot10^{-2}$ & $1/2^5$ & $1.413\cdot10^{-2}$ & 1.856 & $1/2^6$ & $3.718\cdot10^{-3}$ & 1.926 \\
\midrule
3 & $1/2^3$ & $4.945\cdot10^{-3}$ & $1/2^4$ & $2.974\cdot10^{-4}$ & 4.056 & $1/2^5$ & $1.813\cdot10^{-5}$ & 4.036 \\
\midrule
4 & $1/2^3$ & $3.221\cdot10^{-4}$ & $1/2^4$ & $1.677\cdot10^{-5}$ & 4.264 & $1/2^5$ & $1.012\cdot10^{-6}$ & 4.051 \\
\midrule
5 & $1/2^2$ & $7.374\cdot10^{-4}$ & $1/2^3$ & $1.387\cdot10^{-5}$ & 5.733 & $1/2^4$ & $2.087\cdot10^{-7}$ & 6.054 \\
\bottomrule
\end{tabularx}
\caption{Test of accuracy. The spatial error and convergence rates. From top to bottom: the $\IQ^1, \IQ^2, \cdots, \IQ^5$ schemes using a very small time step for a smooth solution.}
\label{tab:accuravy_test_space}
\end{table}

\subsection{Convergence study for testing of preserving positivity}
\label{sec:test2}
In this part, we verify our numerical algorithm preserves positivity. Let the computational domain $\Omega=[0,1]^2$ and the end time $T = 0.1024$. The prescribed manufactured solutions are as follows:
\begin{align*}
\rho = 1, \quad
\vec{u} = 
\begin{bmatrix}
0\\
0
\end{bmatrix}, \quad
e = \frac{1}{\gamma-1}(\sin^8{(2\pi(x+y))} + 10^{-12}).
\end{align*} 
Taking Reynolds number $\Rey=1$ and Prandtl number $\Pr = 1.4$, namely with $\gamma = 1.4$ we have $\lambda = 1$. 
The boundary conditions and the system right-hand side are defined by the prescribed solutions. We utilize the same $L^2_h$ norm to measure error.
\par
We use the second order Crank--Nicolson time discretization for internal energy in parabolic sub-problem. Fix the time step size $\Delta t = 3.125\times10^{-6}$ small enough such that the spatial error dominates.
We choose NIPG method with $\sigma = 2$ on $\Gammah$ and $\sigma = 4$ on $\partial{\Omega}$ for $\IQ^1$ scheme; and $\sigma = 0$ for $\IQ^k$ ($k \geq 2$) scheme to solve the second equation in subproblem ($\mathrm{P}$). We choose IIPG method with $\tilde\sigma = 2^k$ to solve the third equation in subproblem ($\mathrm{P}$).
We obtain the expected convergence rates, see Table~\ref{tab:positivity_test_space}.
\begin{table}[ht!]
\centering
\begin{tabularx}{\linewidth}{@{~~}c@{~~}|c@{~}C@{~~}|c@{~}C@{~}|c@{~~}|c@{~}C@{~}|c@{~~}|c@{~}}
\toprule
$k$ & $\Delta x$ & $\|\vec{U}_h^{N_T}-\vec{U}(T)\|_{L^2_h}$ & $\Delta x$ & $\|\vec{U}_h^{N_T}-\vec{U}(T)\|_{L^2_h}$ & rate & $\Delta x$ & $\|\vec{U}_h^{N_T}-\vec{U}(T)\|_{L^2_h}$ & rate & Postprocessing \\
\midrule
1 & $1/2^5$ & $2.858\cdot10^{-2}$ & $1/2^6$ & $6.804\cdot10^{-3}$ & 2.071 & $1/2^7$ & $1.692\cdot10^{-3}$ & 2.008 & Yes\\
\midrule
2 & $1/2^5$ & $6.301\cdot10^{-3}$ & $1/2^6$ & $1.518\cdot10^{-3}$ & 2.054 & $1/2^7$ & $3.749\cdot10^{-4}$ & 2.018 & Yes\\
\midrule
3 & $1/2^4$ & $2.018\cdot10^{-2}$ & $1/2^5$ & $2.063\cdot10^{-4}$ & 6.612 & $1/2^6$ & $9.680\cdot10^{-6}$ & 4.414 & No\\
\midrule
4 & $1/2^4$ & $2.320\cdot10^{-4}$ & $1/2^5$ & $1.121\cdot10^{-5}$ & 4.372 & $1/2^6$ & $6.245\cdot10^{-7}$ & 4.166 & Yes\\
\midrule
5 & $1/2^3$ & $4.614\cdot10^{-2}$ & $1/2^4$ & $5.697\cdot10^{-4}$ & 6.340 & $1/2^5$ & $7.187\cdot10^{-7}$ & 9.631 & No\\
\bottomrule
\end{tabularx}
\caption{Test of accuracy. The spatial error and convergence rates. From top to bottom: the $\IQ^1, \IQ^2, \cdots, \IQ^5$ schemes using a very small time step for a smooth solution. In last column, ``Yes'' indicates the postprocesing \eqref{postprocessing} is triggered, otherwise ``No''.}
\label{tab:positivity_test_space}
\end{table}

\subsection{Lax shock tube problem}
We choose the computational domain $\Omega = [-5,5]\times[0,2]$ and set the simulation end time $T = 1.3$. 
We uniformly partition domain $\Omega$ by square cells with mesh resolution $\Delta x = 1/100$.
The initial conditions for density $\rho^0$, velocity $\vec{u}^0 = \transpose{[u_x^0, u_y^0]}$, and pressure $p^0$ are prescribed as follows:
\begin{align*}
\transpose{[\rho^0, u_x^0, u_y^0, p^0]} = 
\begin{cases}
\transpose{[0.445,\, 0.698,\, 0,\, 3.528]} & \text{if}~~x\in[-5,0),\\
\transpose{[0.5,\, 0,\, 0,\, 0.571]} & \text{if}~~x\in[0,5].
\end{cases}
\end{align*}
The top and bottom boundaries are set to be reflective when solving subproblem ($\mathrm{H}$) and to be Neumann-type when solving subproblem ($\mathrm{P}$). Dirichlet boundary conditions are applied to the left and right boundaries for both subproblems ($\mathrm{H}$) and ($\mathrm{P}$), with values equal to the initials before the wave reaches the boundary.
The Figure~\ref{fig:Lax_shock_tube} shows snapshots of the density field at the simulation final time $T=1.3$ in mountain view.
\begin{figure}[ht!]
\begin{center}
\begin{tabularx}{\linewidth}{@{}c@{~}c@{~}c@{~}c@{}}
\includegraphics[width=0.31\textwidth]{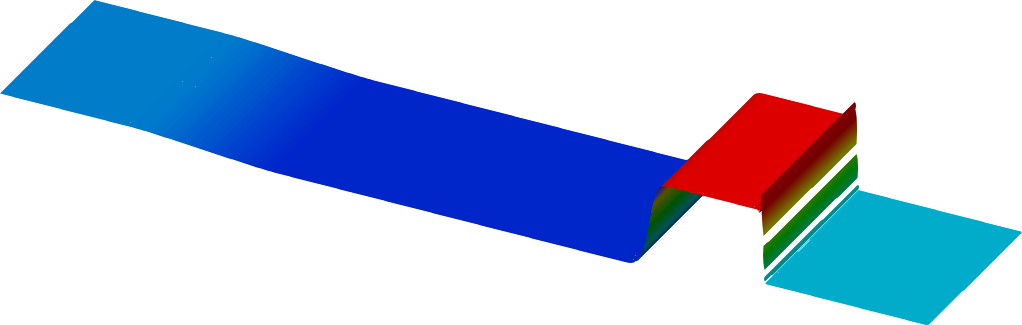}&
\includegraphics[width=0.31\textwidth]{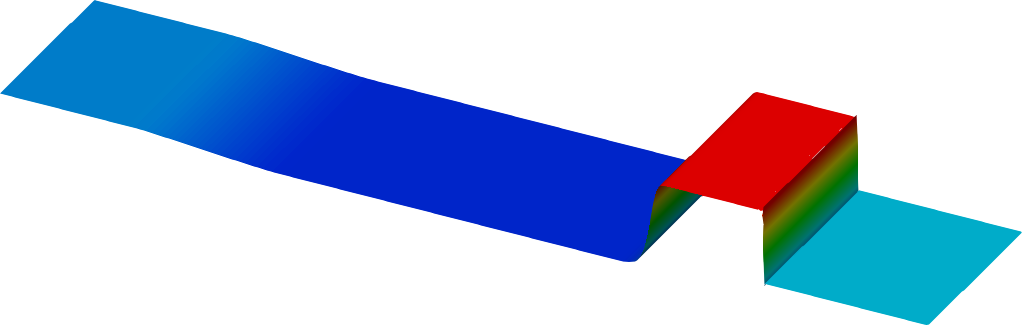}&
\includegraphics[width=0.31\textwidth]{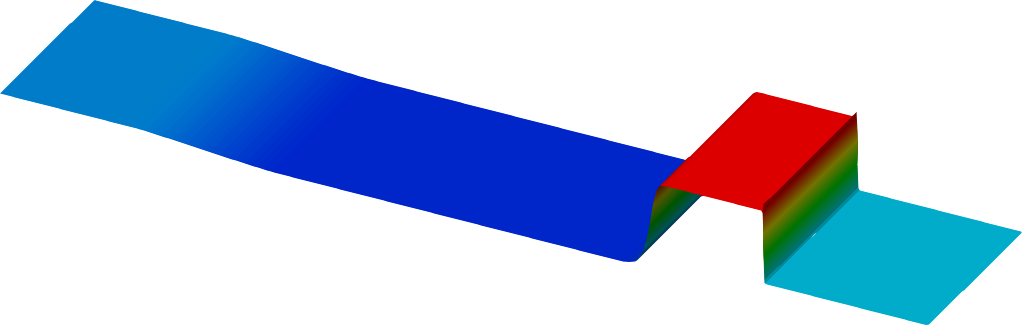}&
\includegraphics[width=0.04\textwidth]{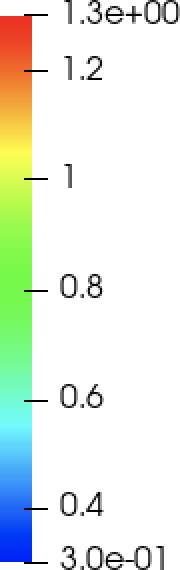} \\
$\IQ^1$ scheme & 
$\IQ^2$ scheme &
$\IQ^3$ scheme & ~ \\
\includegraphics[width=0.31\textwidth]{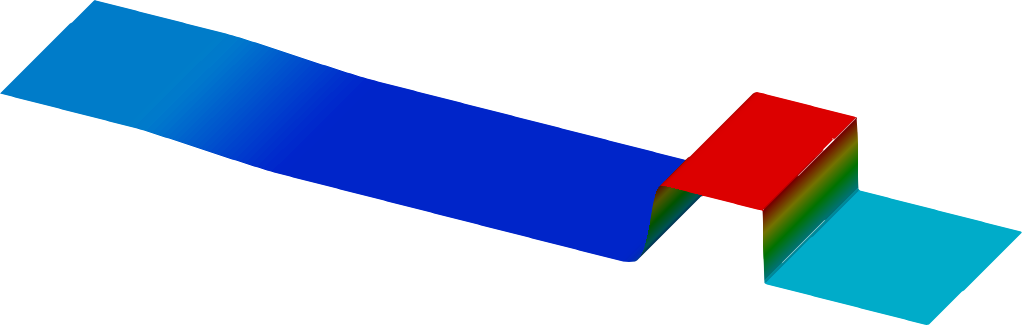}&
\includegraphics[width=0.31\textwidth]{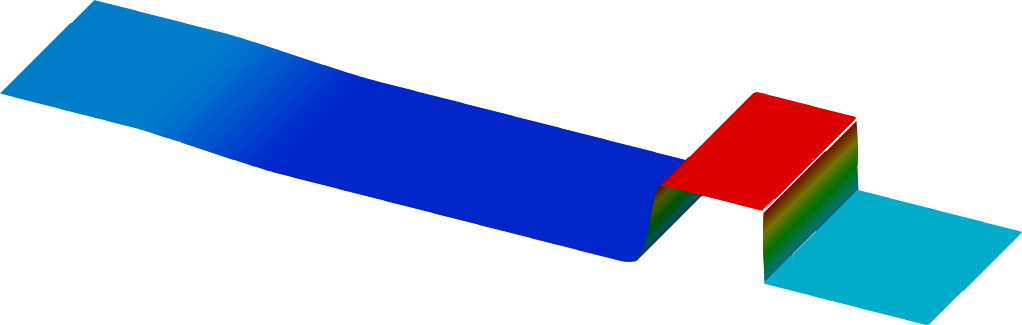}&
\includegraphics[width=0.31\textwidth]{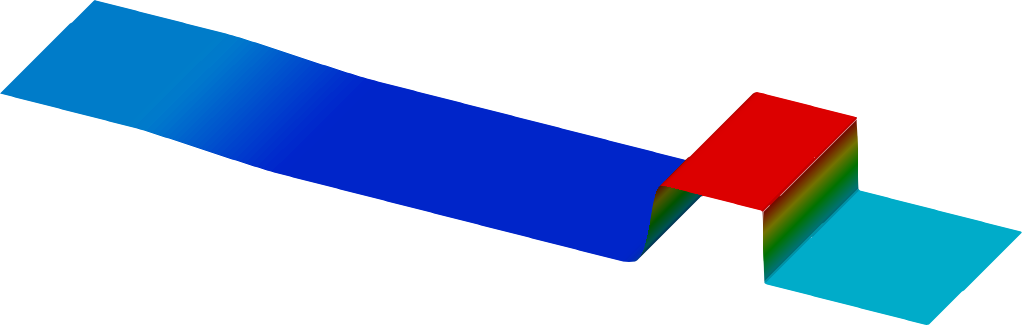}&
\includegraphics[width=0.04\textwidth]{Figures/lax_shock_tube/color_bar.png} \\
$\IQ^4$ scheme &
$\IQ^5$ scheme & 
$\IQ^6$ scheme & ~ \\
\end{tabularx}
\caption{Lax shock tube. The density field snapshots at time $T = 1.3$ are displayed in the mountain view.}
\label{fig:Lax_shock_tube}
\end{center}
\end{figure}

\subsection{Double rarefaction}
We choose the computational domain $\Omega = [-1,1]\times[0,1]$ and set the simulation end time $T = 0.6$. 
We uniformly partition domain $\Omega$ by square cells with mesh resolution $\Delta x = 1/640$ for $\IQ^1$ and $\IQ^2$ schemes, $\Delta x = 1/480$ for $\IQ^3$ and $\IQ^4$ schemes, and $\Delta x = 1/400$ for $\IQ^5$ and $\IQ^6$ schemes.
The initial conditions for density $\rho^0$, velocity $\vec{u}^0 = \transpose{[u_x^0, u_y^0]}$, and pressure $p^0$ are prescribed as follows:
\begin{align*}
\transpose{[\rho^0, u_x^0, u_y^0, p^0]} = 
\begin{cases}
\transpose{[7,\, -1,\, 0,\, 0.2]} & \text{if}~~x\in[-1,0),\\
\transpose{[7,\, 1,\, 0,\, 0.2]} & \text{if}~~x\in[0,1].
\end{cases}
\end{align*}
When solving subproblem ($\mathrm{H}$), reflective boundary conditions are set for the top and bottom boundaries, while outflow conditions are set for the left and right boundaries. When solving subproblem ($\mathrm{P}$), Neumann-type boundary conditions are applied to all boundaries.
The Figure~\ref{fig:doub_rarefaction} shows snapshots of density field at the simulation final time $T=0.6$ in mountain view.
\begin{figure}[ht!]
\begin{center}
\begin{tabularx}{0.95\linewidth}{@{}c@{~}c@{~}c@{~~}c@{}}
\includegraphics[width=0.29\textwidth]{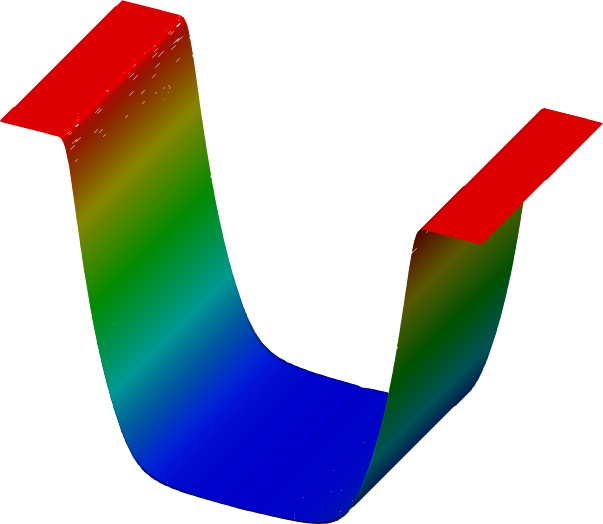}&
\includegraphics[width=0.29\textwidth]{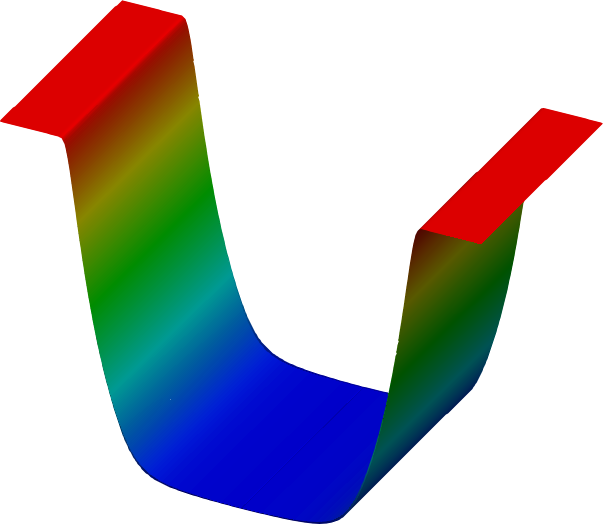}&
\includegraphics[width=0.29\textwidth]{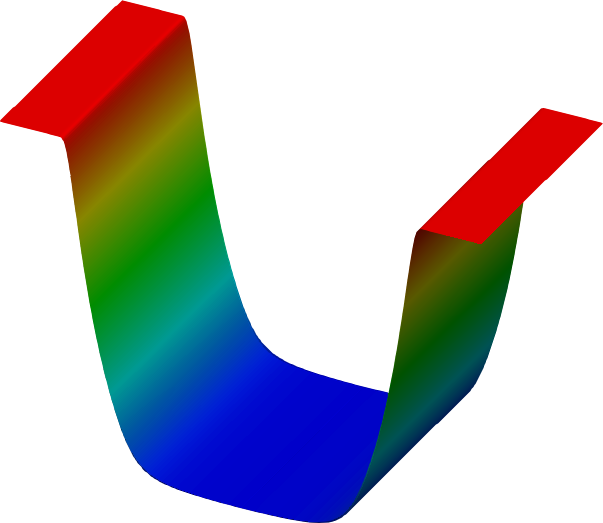}&
\includegraphics[width=0.04\textwidth]{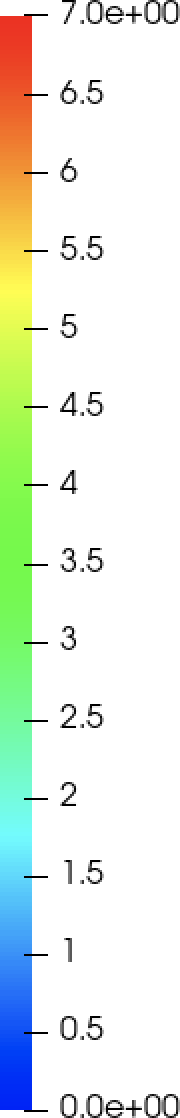} \\
$\IQ^1$ scheme & 
$\IQ^2$ scheme &
$\IQ^3$ scheme & ~ \\
\includegraphics[width=0.29\textwidth]{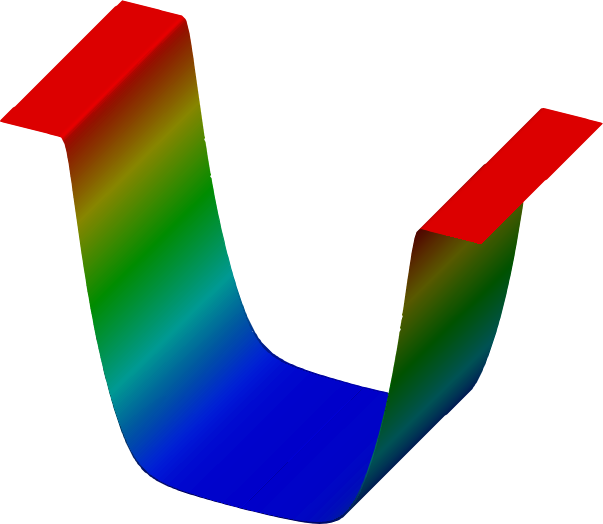}&
\includegraphics[width=0.29\textwidth]{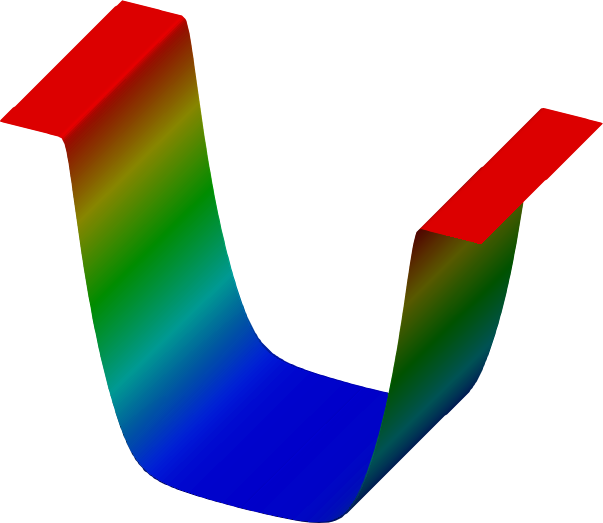}&
\includegraphics[width=0.29\textwidth]{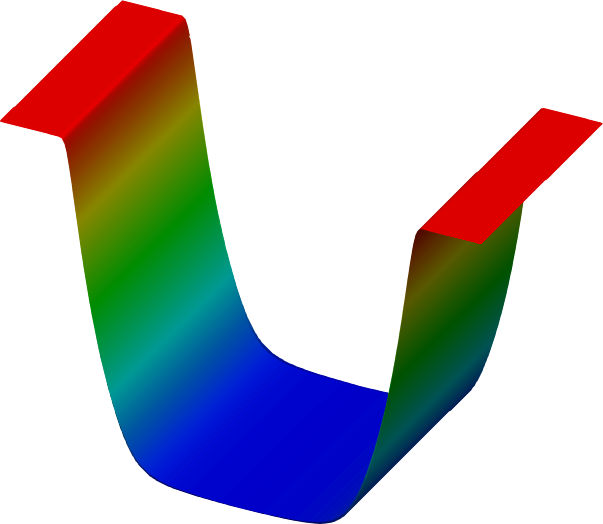}&
\includegraphics[width=0.04\textwidth]{Figures/doub_rarefaction/color_bar.png} \\
$\IQ^4$ scheme &
$\IQ^5$ scheme & 
$\IQ^6$ scheme & ~ \\
\end{tabularx}
\caption{Double rarefaction. The density field snapshots at time $T = 0.6$ are displayed in the mountain view.}
\label{fig:doub_rarefaction}
\end{center}
\end{figure}

\subsection{Sedov blast wave}\label{sec:numercal_experiment:Sedov}
The Sedov blast wave test is a standard benchmark in hyperbolic conservation law. It involves a blast wave generated by a strong explosion, which involves low density, low pressure, and a strong shock. This test holds great value in validating a positivity-preserving scheme.
\par
Let the computational domain $\Omega = [0, 1.1]^2$ and the simulation end time $T = 1$. 
We uniformly partition domain $\Omega$ by square cells with mesh resolution $\Delta x = 1.1/320$.
The initials are prescribed as piecewise constants: density $\rho^0 = 1$ and velocity $\vec{u}^0 = \vec{0}$, for all points in $\Omega$; the total energy $E^0$ equals to $10^{-12}$ everywhere except the cell at the lower left corner, where $0.244816/{\Delta x}^2$ is used.
When solving subproblem ($\mathrm{H}$), reflective boundary conditions are set for the left and bottom boundaries, while outflow conditions are set for the top and right boundaries. When solving subproblem ($\mathrm{P}$), Neumann-type boundary conditions are applied to all boundaries.
\par
The Figure~\ref{fig:sedov_blast_density} shows snapshots of density field at the simulation final time $T=1$. 
The postprocessing \eqref{postprocessing-local} with \eqref{definition-T} is used and necessary in all these tests. See Figure~\ref{fig:sedov_blast_DR}.
Our numerical algorithm preserves conservation and the shock location is correct.
\begin{figure}[ht!]
\begin{center}
\begin{tabularx}{0.95\linewidth}{@{}c@{~}c@{~}c@{~}c@{}}
\includegraphics[width=0.3\textwidth]{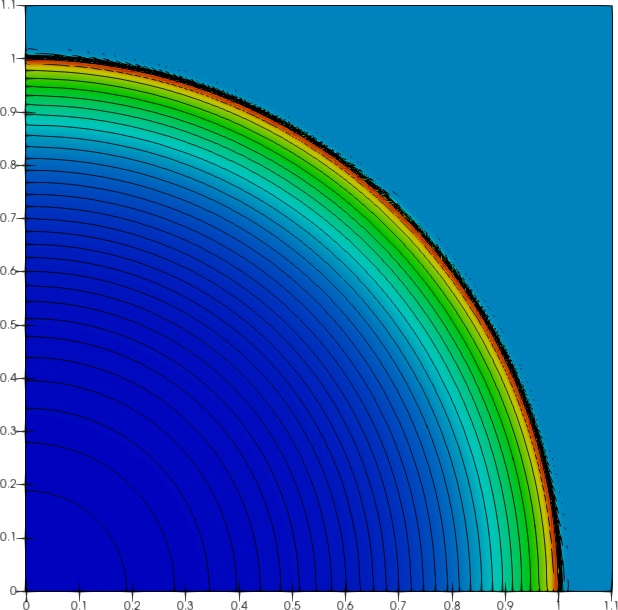} &
\includegraphics[width=0.3\textwidth]{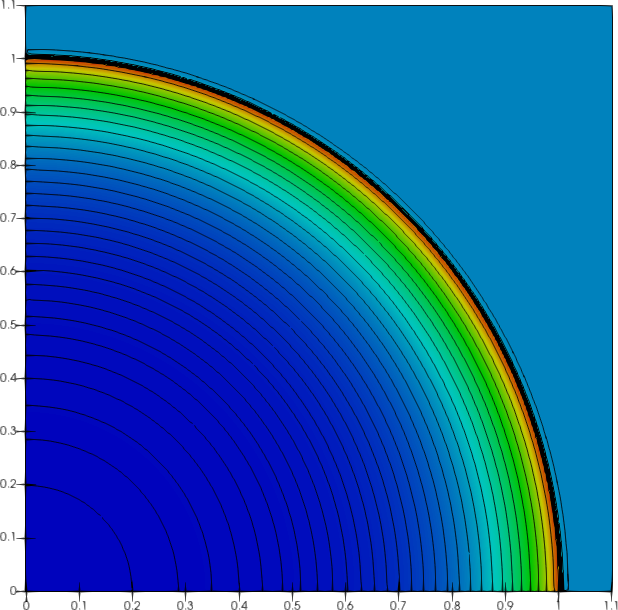} &
\includegraphics[width=0.3\textwidth]{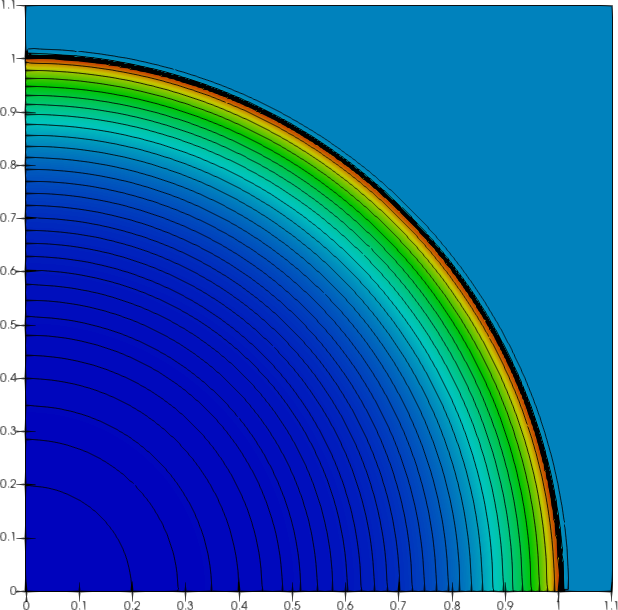} &
\includegraphics[width=0.045\textwidth]{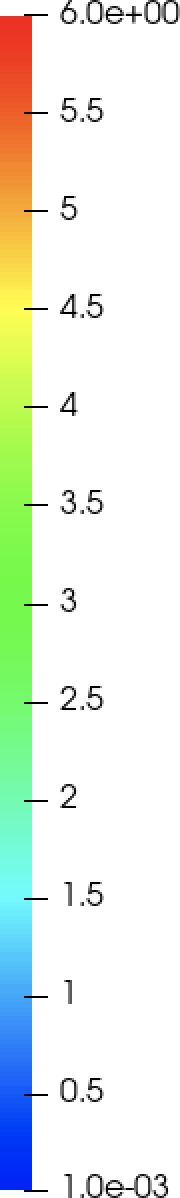}\\
$\IQ^1$ scheme & 
$\IQ^2$ scheme & 
$\IQ^3$ scheme & ~ \\
\includegraphics[width=0.3\textwidth]{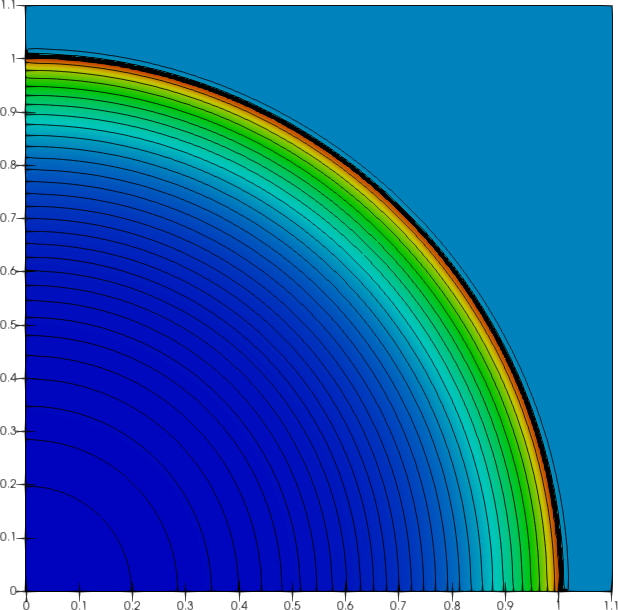} &
\includegraphics[width=0.3\textwidth]{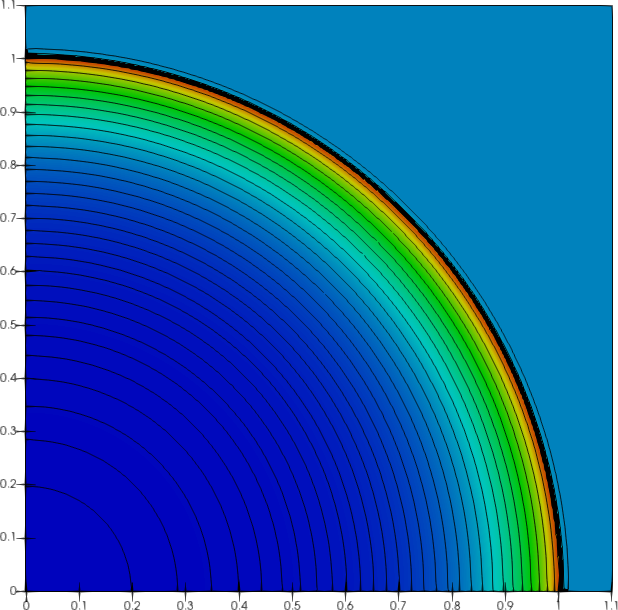} &
\includegraphics[width=0.3\textwidth]{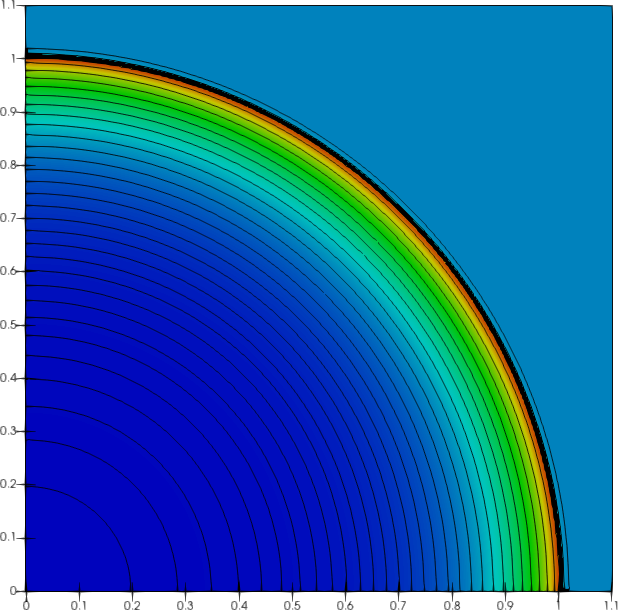} &
\includegraphics[width=0.045\textwidth]{{Figures/sedov/color_bar}.png}\\
$\IQ^4$ scheme &
$\IQ^5$ scheme & 
$\IQ^6$ scheme & ~ \\
\end{tabularx}
\caption{Sedov blast wave. The snapshots of density profile are taken at $T=1$. Plot of density: $50$ exponentially distributed contour lines of density from $0.001$ to $6$.}
\label{fig:sedov_blast_density}
\end{center}
\end{figure}
\begin{figure}[ht!]
\begin{center}
\begin{tabularx}{0.95\linewidth}{@{}c@{~}c@{~}c@{}}
\includegraphics[width=0.3\textwidth]{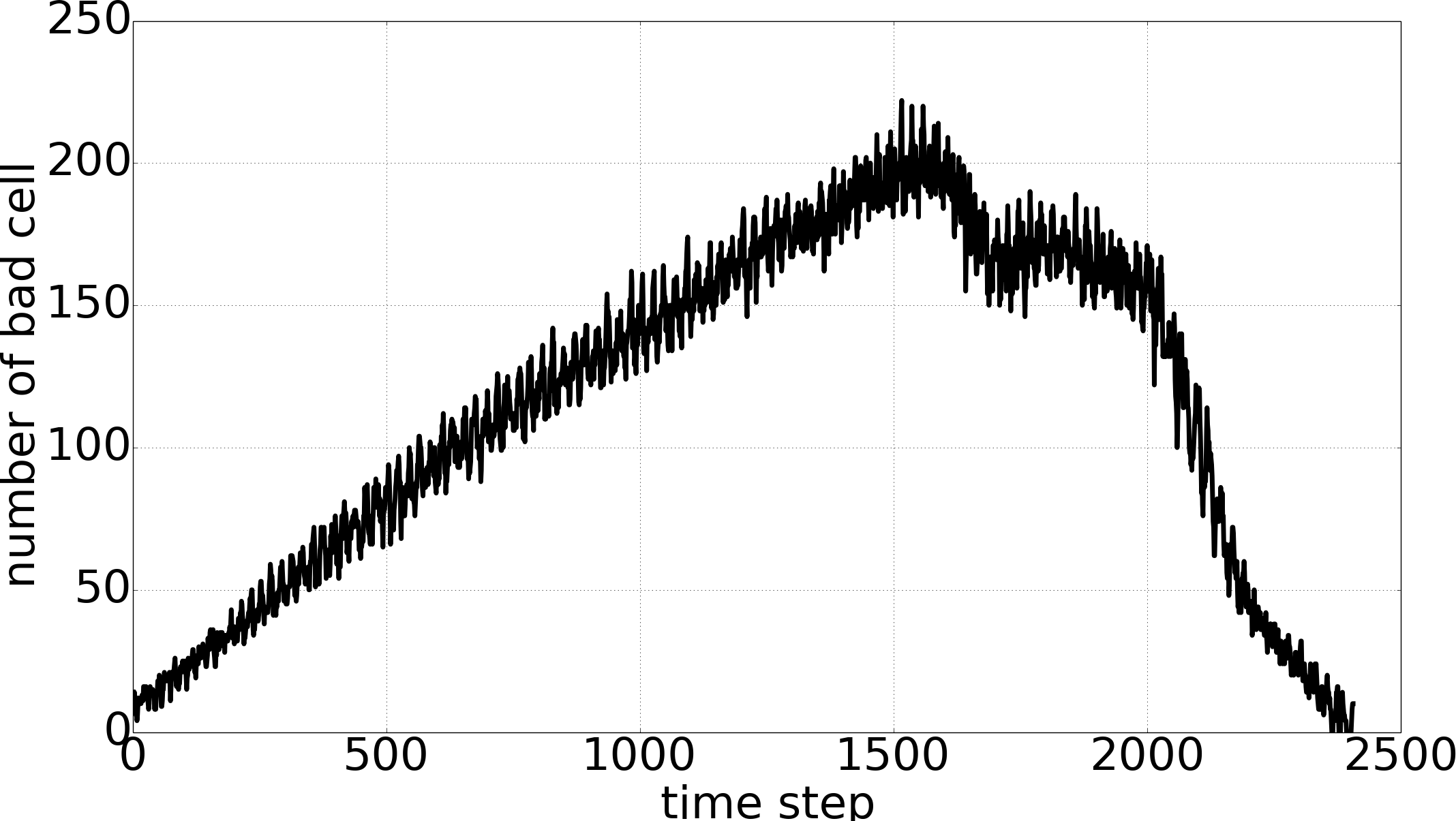} &
\includegraphics[width=0.3\textwidth]{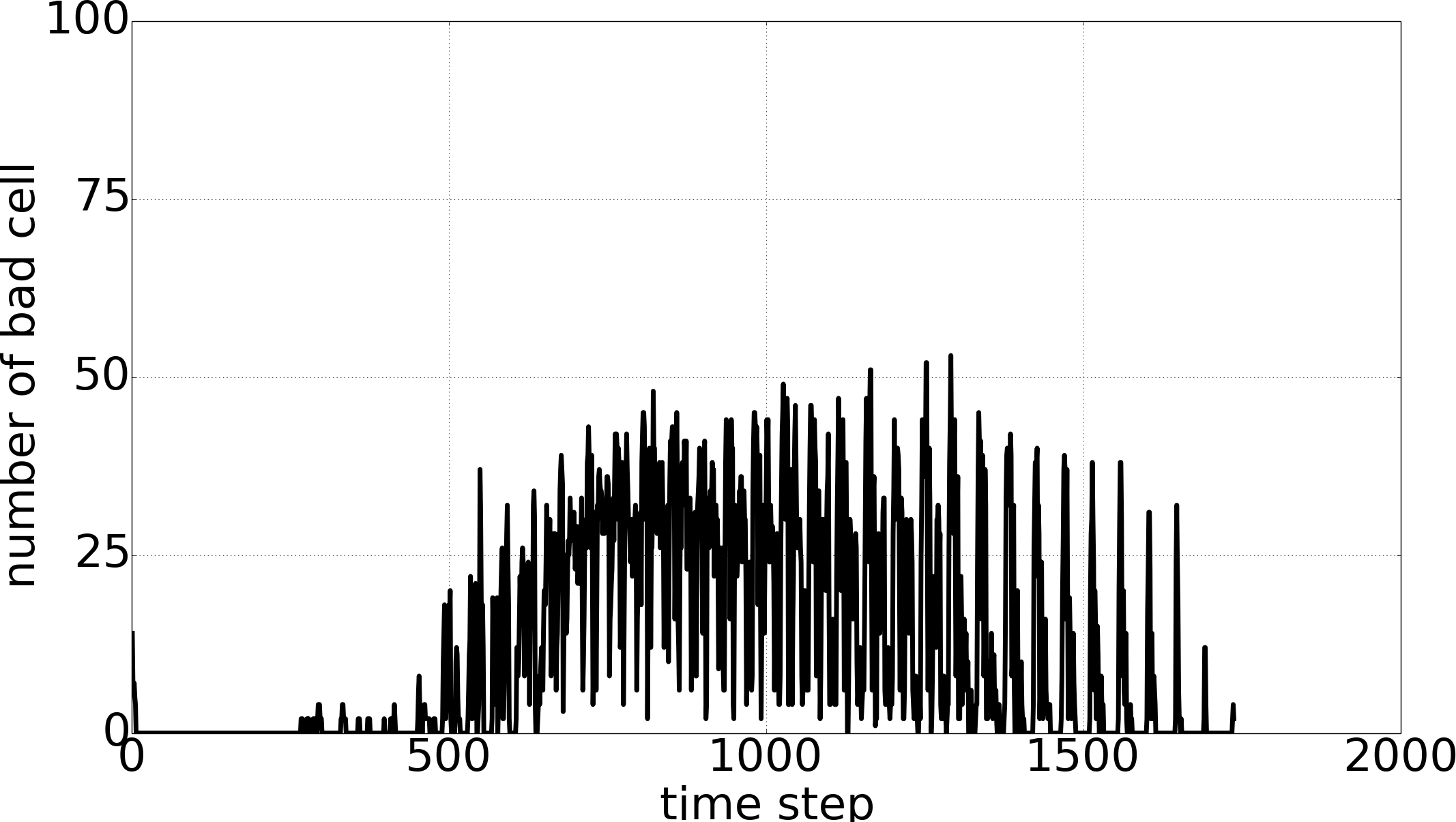} &
\includegraphics[width=0.3\textwidth]{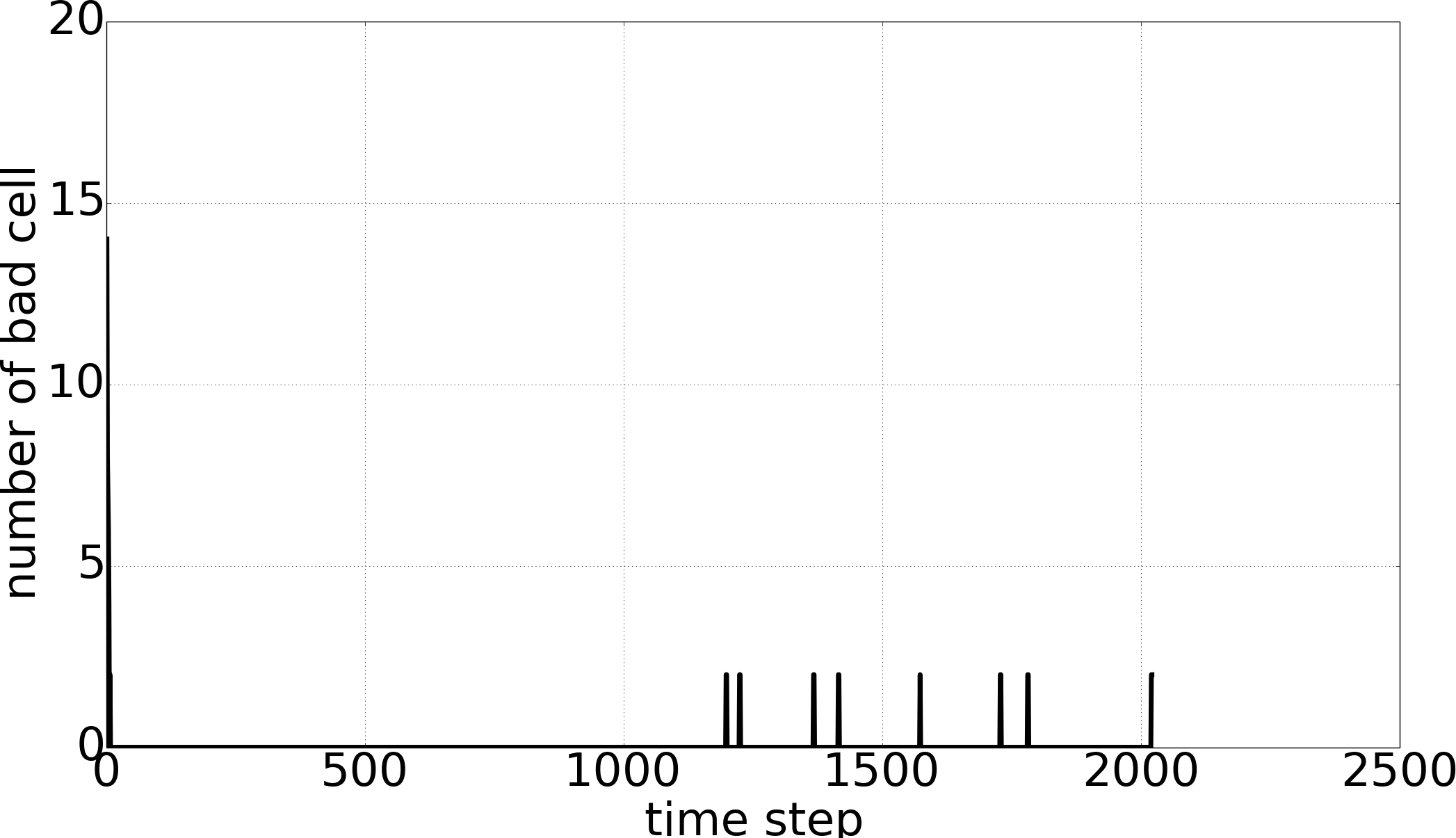} \\
\includegraphics[width=0.3\textwidth]{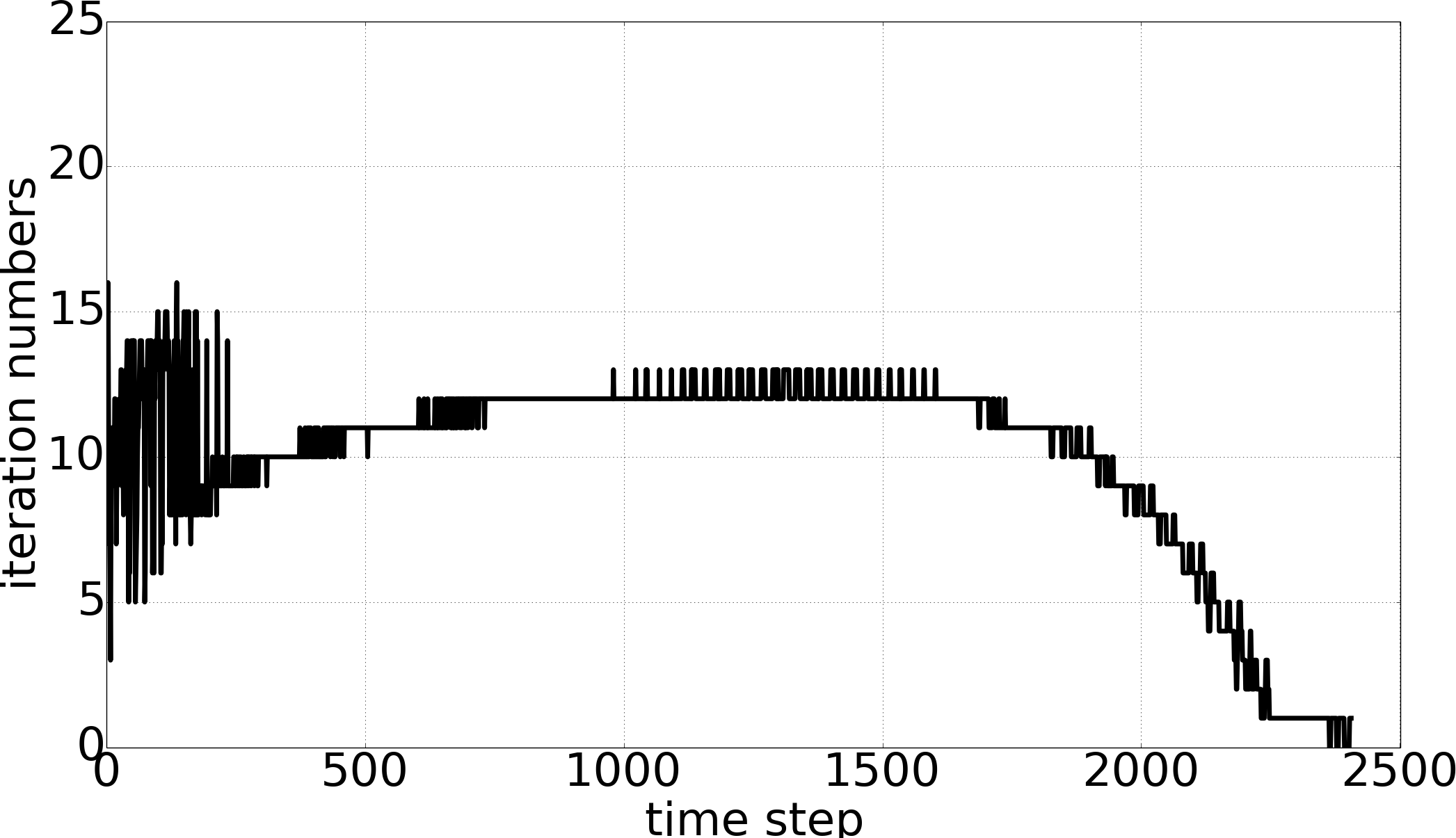} &
\includegraphics[width=0.3\textwidth]{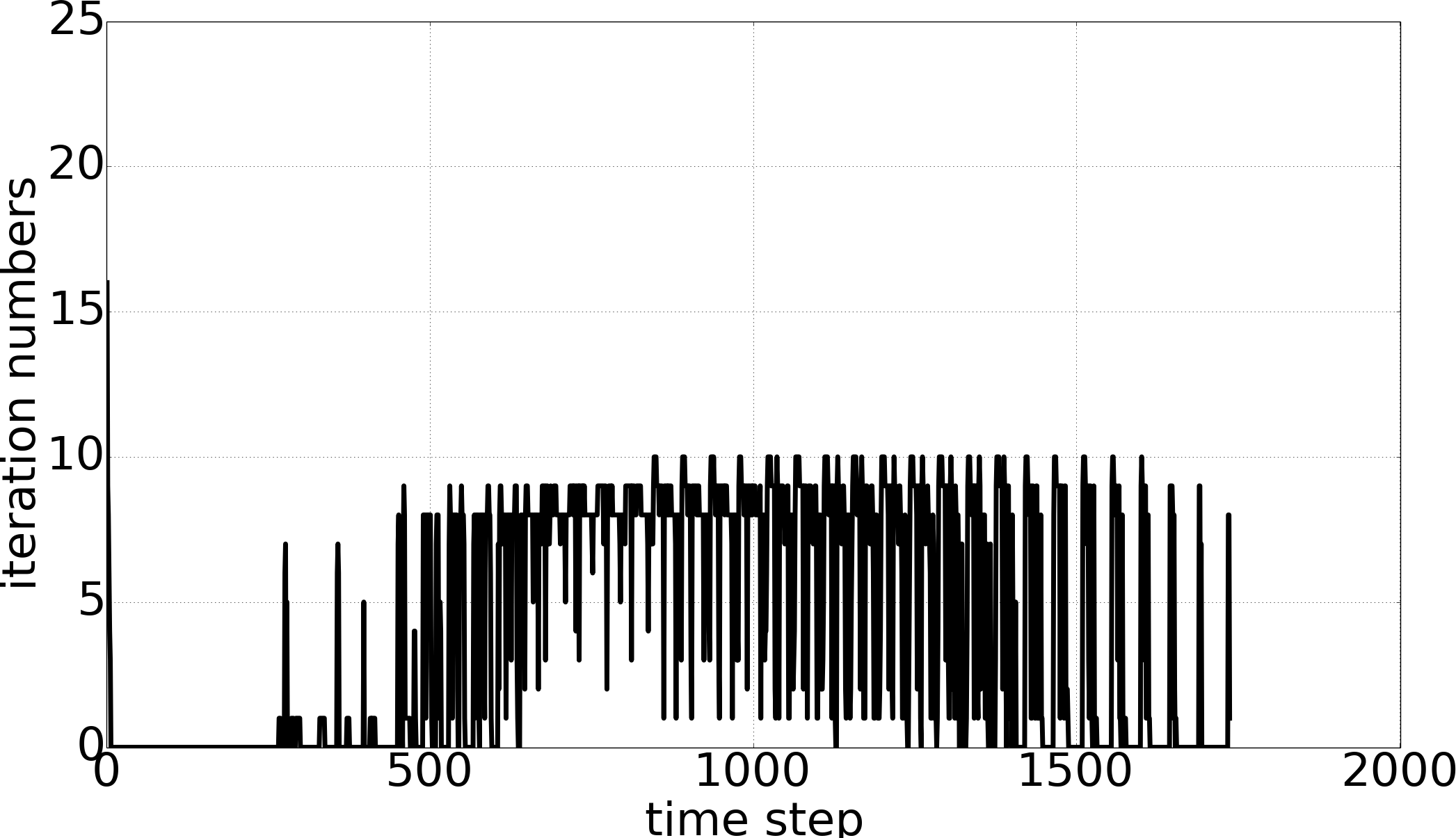} &
\includegraphics[width=0.3\textwidth]{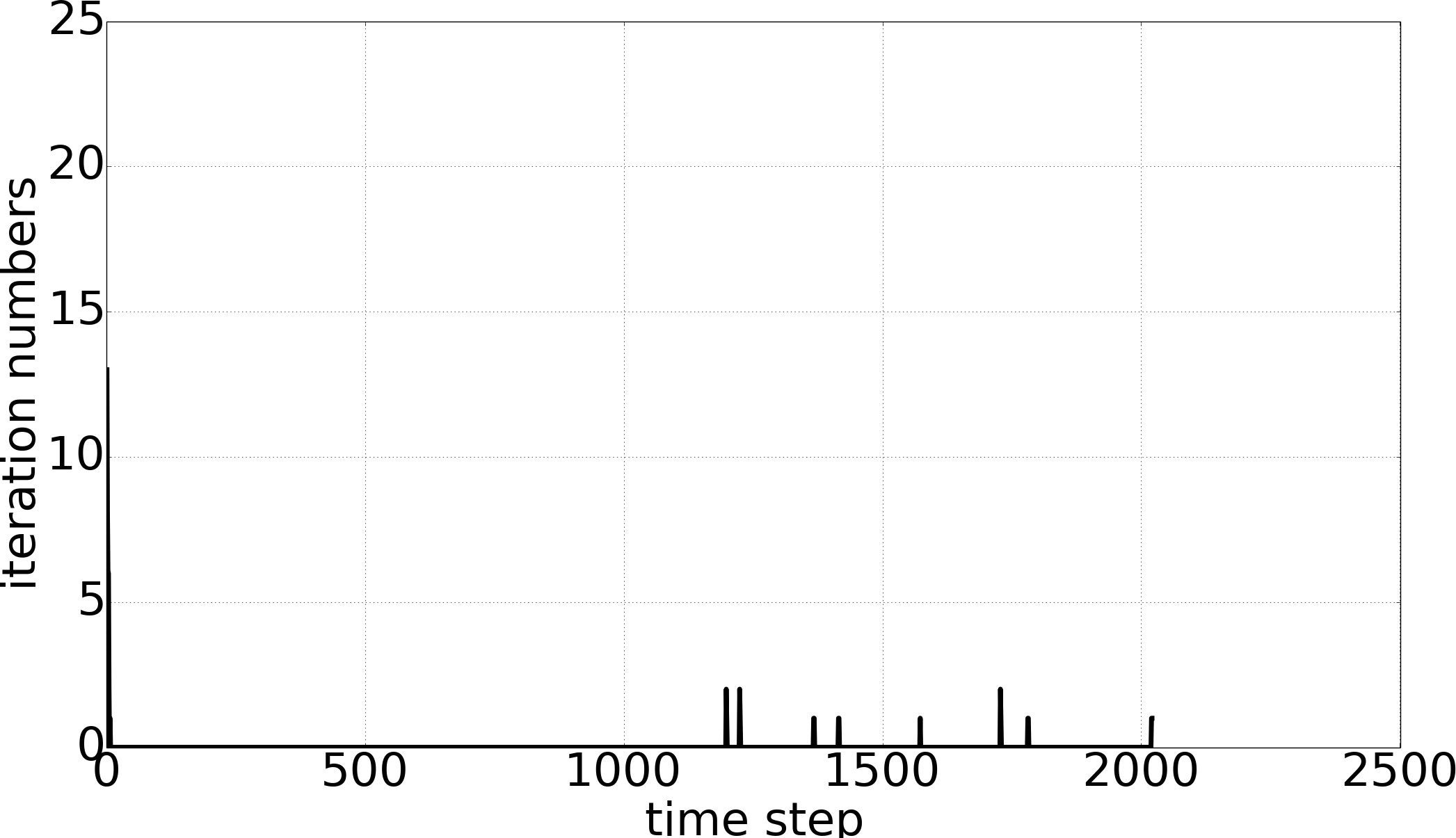} \\
\end{tabularx}
\caption{From left to right $\IQ^2$, $\IQ^4$, $\IQ^6$ DG schemes. Top: the number of bad cells after solving ($\mathrm{P}$) at each time step (the DG polynomial cell averages are not in the admissible set). Bottom: the number of Douglas--Rachford iterations need to reach round-off convergence for solving \eqref{total-energy-opt-2} with \eqref{definition-T}.}
\label{fig:sedov_blast_DR}
\end{center}
\end{figure}

\subsection{Shock diffraction}
In this test, we consider a right-moving high-speed shock, which is perpendicular to solid surface at initial and moves towards undisturbed air ahead. As the shock crosses the right corner, a region of low density and low pressure emerges, making this a challenging benchmark for conservation law.
\par
Let the computational domain $\Omega$ be the union of $[0,1]\times[6,11]$ and $[1,13]\times[0,11]$. We set the simulation end time $T = 2.3$.
The initial condition is a pure right-moving shock of Mach number $5.09$, initially located at $\{x=0.5, 6\leq y\leq 12\}$, moving into undisturbed air ahead of the shock with a density of $1.4$ and a pressure of $1$.
When solving subproblem ($\mathrm{H}$), the left boundary is inflow, while the right and bottom boundaries are outflow. The fluid--solid boundaries $\{y=6, 0\leq x\leq 1\}$ and $\{x=1, 0\leq y\leq 6\}$ are reflective. In addition, the flow values on the top boundary are set to accurately depict the motion of the Mach $5.09$ shock.
When solving subproblem ($\mathrm{P}$), Neumann-type boundary conditions are applied to the fluid--solid surfaces, while Dirichlet boundary conditions are applied to the remaining boundaries. The Dirichlet data on the left and top boundaries are determined by the inflow data and the exact motion of the Mach $5.09$ shock. Additionally, the Dirichlet data on the right and bottom boundaries remain unchanged from their initial values before the shock wave reaches the boundary.
\par
The Figure~\ref{fig:shock_def} displays snapshots of density field at the simulation final time $T=2.3$. The results are comparable to those in \cite{zhang2017positivity}.
\begin{figure}[ht!]
\begin{center}
\begin{tabularx}{\linewidth}{@{}c@{~}c@{~}c@{~}c@{}}
\includegraphics[width=0.31\textwidth]{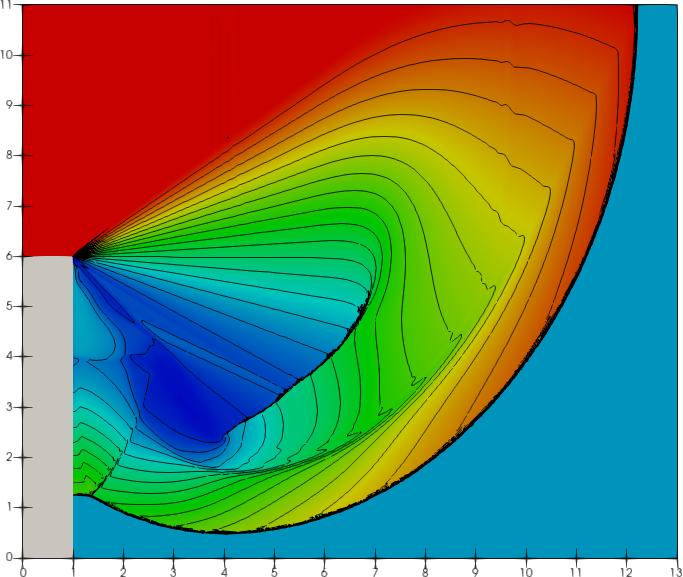}&
\includegraphics[width=0.31\textwidth]{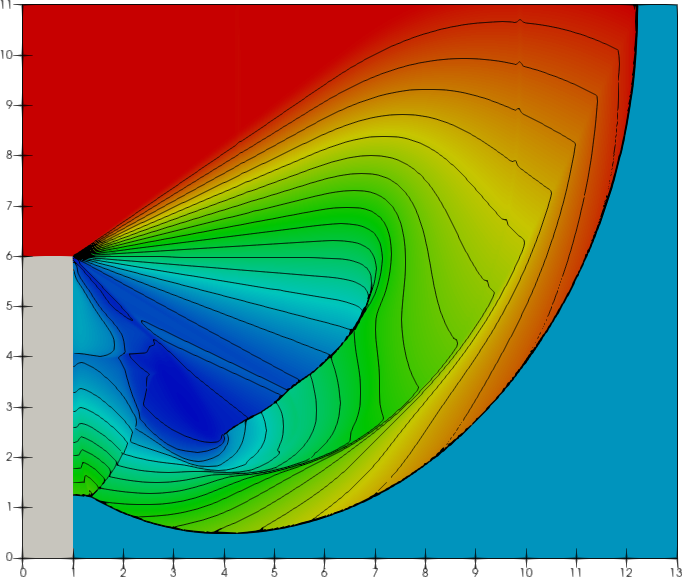}&
\includegraphics[width=0.31\textwidth]{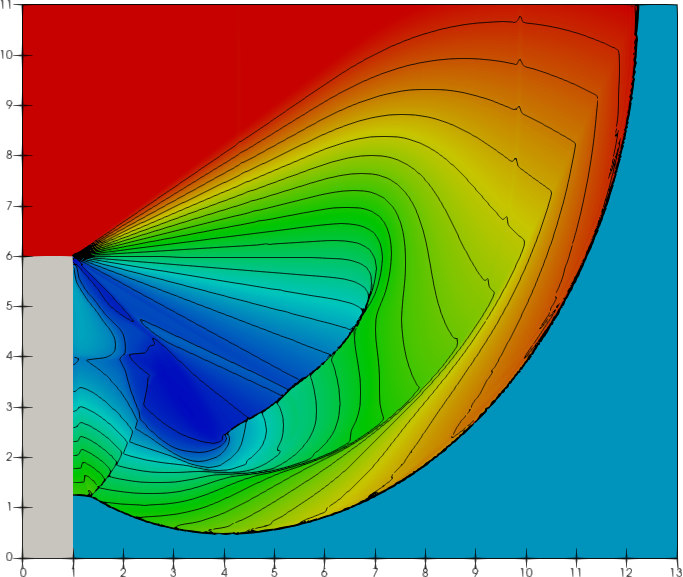}&
\includegraphics[width=0.0825\textwidth]{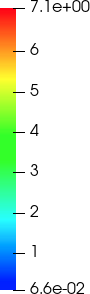} \\
$\IQ^1$ scheme ($\Delta{x}=1/96$) & 
$\IQ^2$ scheme ($\Delta{x}=1/96$) &
$\IQ^3$ scheme ($\Delta{x}=1/64$) & ~ \\
\includegraphics[width=0.31\textwidth]{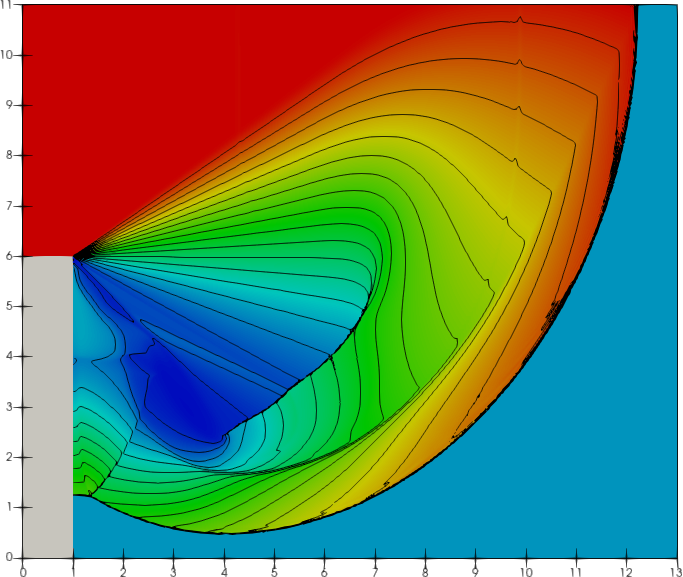}&
\includegraphics[width=0.31\textwidth]{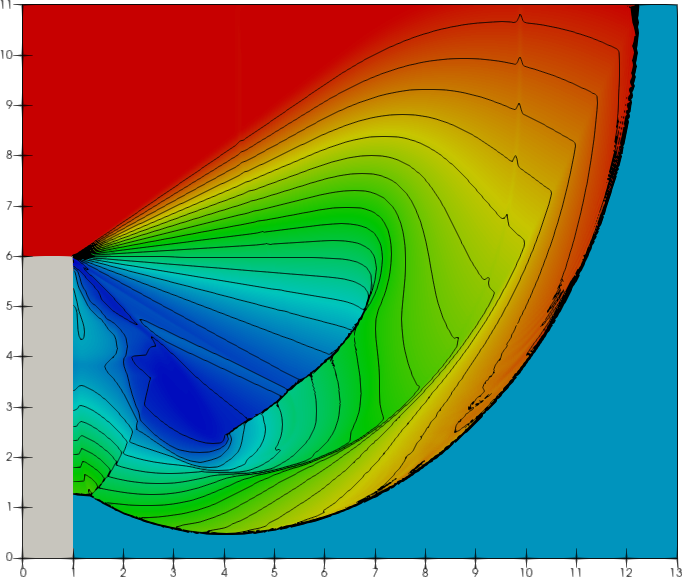}&
\includegraphics[width=0.31\textwidth]{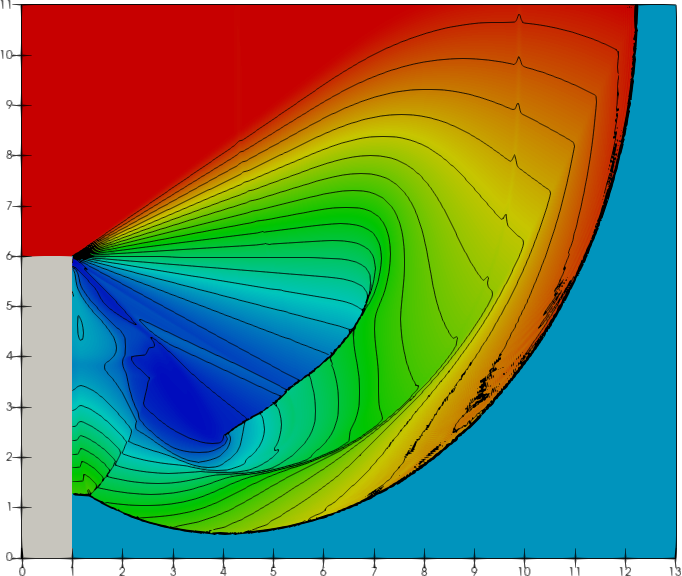}&
\includegraphics[width=0.0825\textwidth]{{Figures/shock_diffraction/color_bar}.png} \\
$\IQ^4$ scheme ($\Delta{x}=1/64$) &
$\IQ^5$ scheme ($\Delta{x}=1/48$) & 
$\IQ^6$ scheme ($\Delta{x}=1/48$) & ~ \\
\end{tabularx}
\caption{Shock diffraction. The snapshots of density profile are taken at $T = 2.3$. The gray colored region denotes solid. Plot of density: $20$ equally spaced contour lines from $0.066227$ to $7.0668$.}
\label{fig:shock_def}
\end{center}
\end{figure}

\subsection{Mach $10$ shock reflection and diffraction}
The high-speed shock reflection and diffraction test is a widely used benchmark \cite{fan2022positivity}. We consider a Mach 10 shock that moves to the right with a sixty-degree incident angle to the solid surface. As the shock across the sharp corner, areas of low density and low pressure appear. In the region of shock reflection, vortices are formed due to Kelvin--Helmholtz instabilities.
\par
Let the computational domain $\Omega$ be the union of $[0,4]\times[0,1]$ and $[1,4]\times[-1,0]$. We set the simulation end time $T = 0.2$.
The initial condition is a right-moving shock of Mach number $10$ positioned at $(\frac{1}{6},0)$ with a sixty-degree angle to the $x$-axis. The shock is moving into undisturbed air ahead of it, which has a density of $1.4$ and a pressure of $1$. In the post-shock region, the density is $8$, the velocity is $\transpose{[4.125\sqrt{3}, -4.125]}$, and the pressure is $116.5$.
\par
When solving subproblem ($\mathrm{H}$), the left boundary is inflow, while the right and bottom boundaries are outflow. Part of the fluid--solid boundaries $\{y=0, \frac{1}{6}\leq x\leq 1\}$ and $\{x=1, -1\leq y\leq 0\}$ are reflective, and the post-shock condition is imposed at $\{y=0, 0\leq x\leq \frac{1}{6}\}$.  On the boundary with post-shock condition, the density, velocity, and pressure are fixed in time with the initial values to make the reflected shock stick to the solid wall. In addition, the flow values on the top boundary are set to accurately depict the motion of the Mach $10$ shock.
When solving subproblem ($\mathrm{P}$), Neumann-type boundary conditions are applied to part of the fluid--solid surfaces associated with the reflective boundary in subproblem ($\mathrm{H}$), while Dirichlet boundary conditions are applied to the remaining boundaries. The Dirichlet data on the left and top boundaries are determined by the inflow data and the exact motion of the Mach $10$ shock. Additionally, the Dirichlet data on the right and bottom boundaries remain unchanged from their initial values before the shock wave reaches the boundary.
\par
From Figure~\ref{fig:shock_ref_def}, we see our scheme produces satisfactory non-oscillatory solutions with correct shock location and well-captured rollups. These test results are consistent with the observations for fully explicit high order accurate schemes in \cite{zhang2017positivity}.
\begin{figure}[ht!]
\begin{center}
\begin{tabularx}{\linewidth}{@{}c@{~}c@{~}c@{}}
\includegraphics[width=0.33\textwidth]{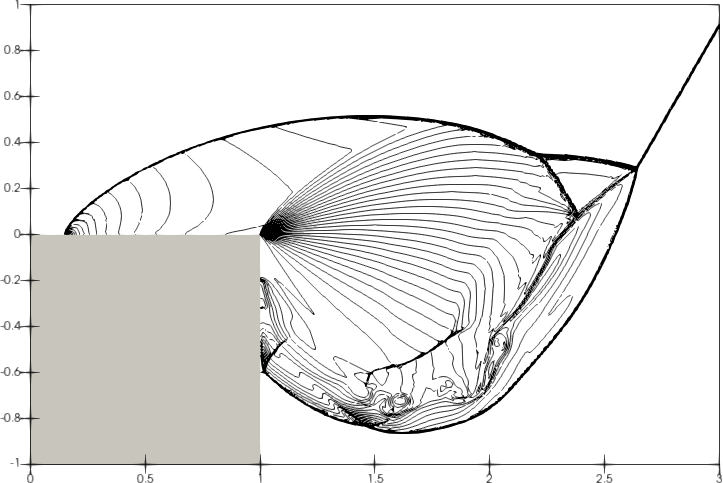}&
\includegraphics[width=0.33\textwidth]{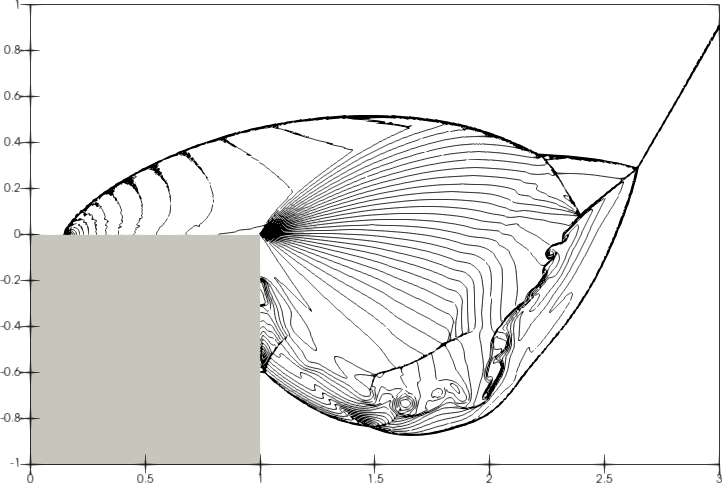}&
\includegraphics[width=0.33\textwidth]{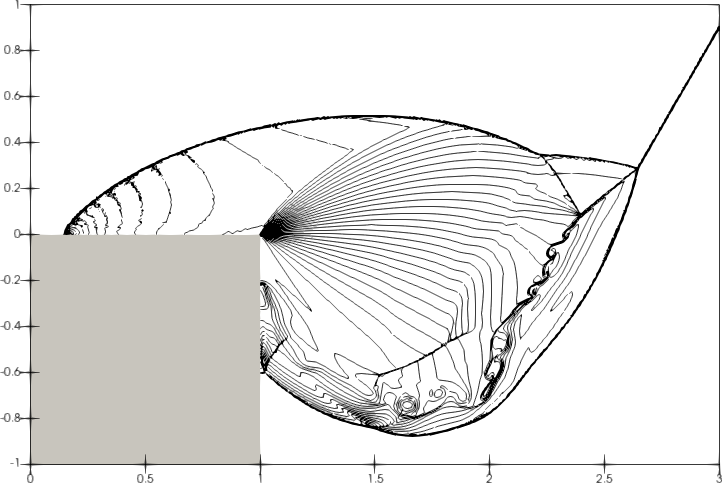}\\
$\IQ^1$ scheme ($\Delta{x}=1/600$) & 
$\IQ^2$ scheme ($\Delta{x}=1/400$) &
$\IQ^3$ scheme ($\Delta{x}=1/300$) \\
\includegraphics[width=0.33\textwidth]{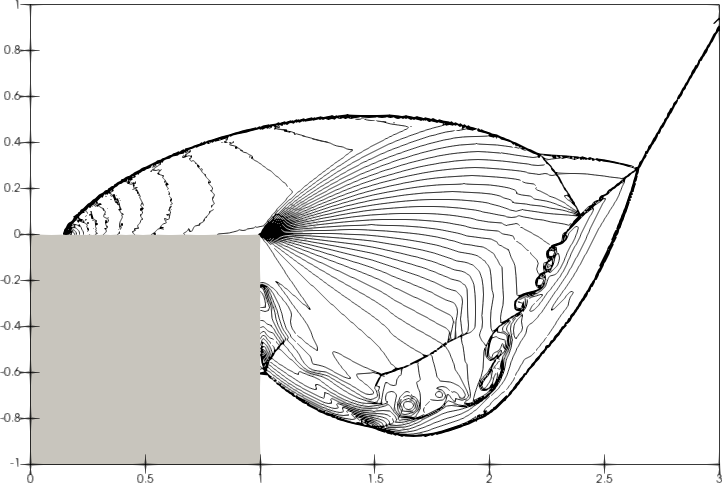}&
\includegraphics[width=0.33\textwidth]{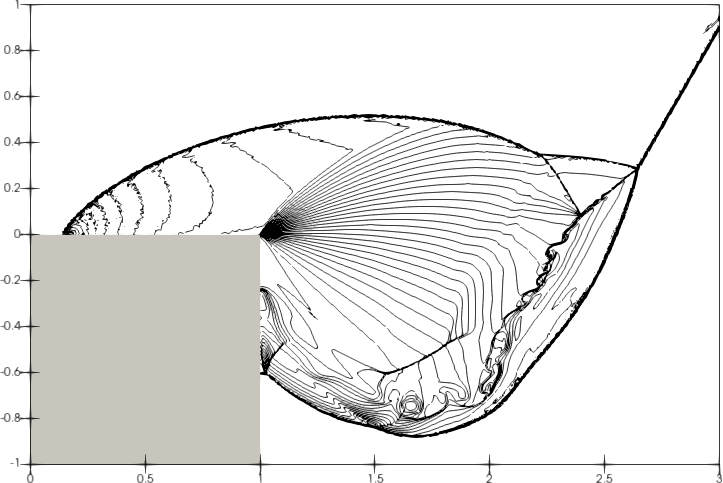}&
\includegraphics[width=0.33\textwidth]{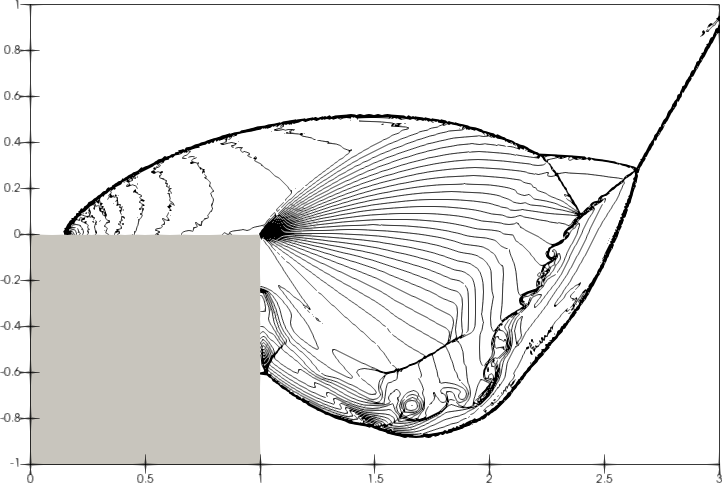}\\
$\IQ^4$ scheme ($\Delta{x}=1/240$) &
$\IQ^5$ scheme ($\Delta{x}=1/200$) & 
$\IQ^6$ scheme ($\Delta{x}=1/180$) \\
\end{tabularx}
\caption{Mach $10$ shock reflection and diffraction. The snapshots of density profile are taken at $T = 0.2$. The gray colored region denotes solid. Plot of density: $50$ equally space contour lines from $0$ to $25$. Only contour lines are plotted. We can observe that the scheme with higher order spatial accuracy indeed induces less artificial viscosity, despite that the temporal accuracy is at most second order.}
\label{fig:shock_ref_def}
\end{center}
\end{figure}

\subsection{High Mach number astrophysical jet}\label{sec:numercal_experiment:jet}
To replicate the gas flows and shock wave patterns observed in the Hubble Space Telescope images, one can utilize theoretical models within a gas dynamics simulator, see \cite{gardner2009numerical,ha2005numerical,tong2023class}. We consider the Mach $2000$ astrophysical jets without radiative cooling to demonstrate the robustness of our scheme.
\par
Let the computational domain $\Omega = [0,1]\times[-0.5,0.5]$. We set the simulation end time $T = 0.001$.
In this example, we use the ideal gas constant $\gamma = 5/3$. The initial density $\rho^0 = 0.5$, velocity $\vec{u}^0 = \vec{0}$, and pressure $p^0 = 10^{-6}$.
When solving subproblem ($\mathrm{H}$), the following inflow boundary conditions are set for the left boundary 
\begin{align*}
\transpose{[\rho, u_x, u_y, p]} = 
\begin{cases}
\transpose{[5,\, 800,\, 0,\, 0.4127]} & \text{if}~x=0~\text{and}~\abs{y}\leq 0.05,\\
\transpose{[0.5,\, 0,\, 0,\, 10^{-6}]} & \text{if}~x=0~\text{and}~\abs{y}> 0.05,
\end{cases}
\end{align*}
while the outflow boundary conditions are set for the top, right, and bottom boundaries. When solving subproblem ($\mathrm{P}$), Dirichlet boundary condition is applied to the left boundary, while Neumann-type boundary conditions are applied to the remaining boundaries.
The Dirichlet data on the left boundary are determined by the inflow data of the Mach $2000$ astrophysical jet.
\par
We take $\epsilon=10^{-8}$ in defining $G^\epsilon$ and the Zhang--Shu limiter in Section~\ref{sec:simplelimiter}.
The postprocessing of DG cell averages is necessary in these simulations. 
For the sake of robustness and efficiency in the postprocessing step,  we define the local region $T$ as the set of indices 
\begin{equation}
\label{definition-T-2}
T=\left\{i: \mathrm{either}\quad  \overline{\vec{U}_i^\mathrm{P}}\notin G^\epsilon \quad \mathrm{or}\quad \overline{E^\mathrm{P}_i}-\frac12\|\overline{\vec{m}^\mathrm{P}_i}\|/\overline{\rho^\mathrm{P}_i} \geq 2*10^{-6}\right \}.
\end{equation}

The Figure~\ref{fig:astrophysical_jet_density} shows snapshots of density field at the simulation final time $T=0.001$. 
See the performance of Douglas--Rachford splitting for solving \eqref{total-energy-opt-2} in Figure~\ref{fig:astrophysical_jet_DR}.
\begin{figure}[ht!]
\begin{center}
\begin{tabularx}{\linewidth}{@{}c@{~}c@{~}c@{~}c@{}}
\includegraphics[width=0.32\textwidth]{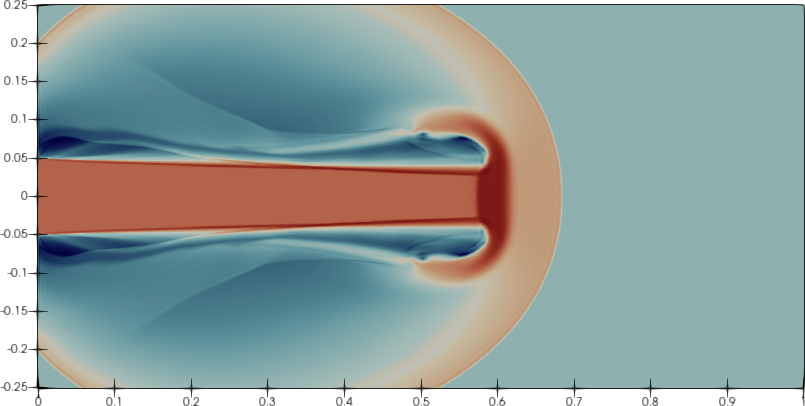} &
\includegraphics[width=0.32\textwidth]{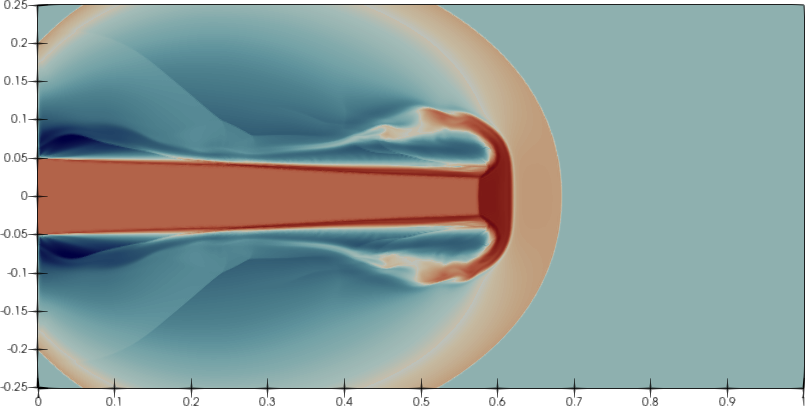} &
\includegraphics[width=0.32\textwidth]{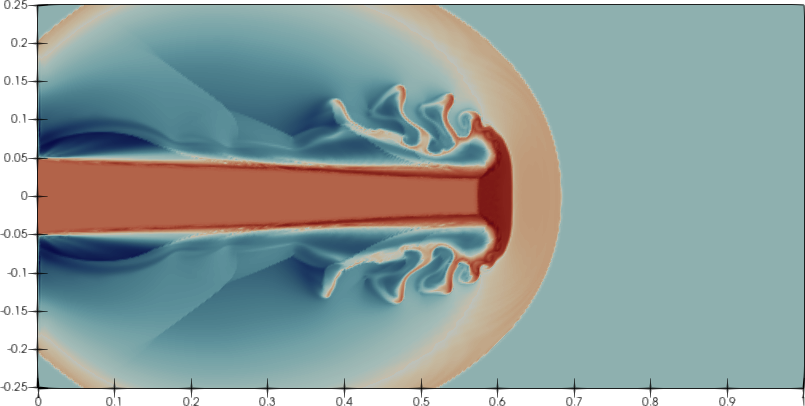} & 
\includegraphics[width=0.035\textwidth]{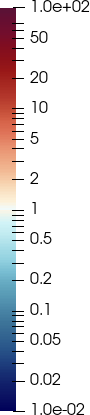} \\
$\IQ^1$ scheme ($\Delta{x}=1/640$) & 
$\IQ^2$ scheme ($\Delta{x}=1/640$) &
$\IQ^3$ scheme ($\Delta{x}=1/480$) \\
\includegraphics[width=0.32\textwidth]{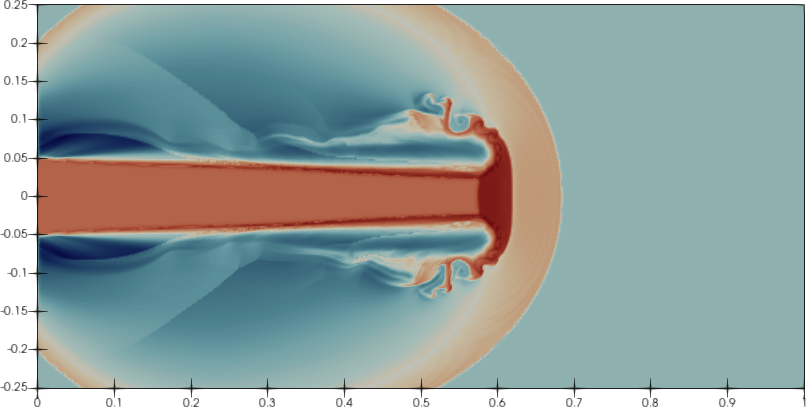} &
\includegraphics[width=0.32\textwidth]{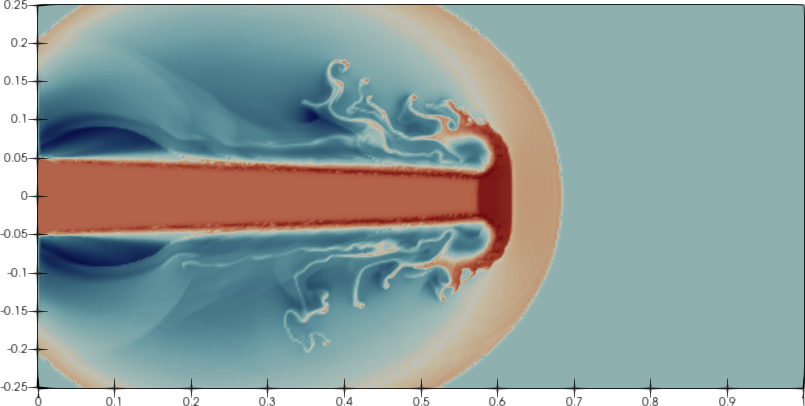} &
\includegraphics[width=0.32\textwidth]{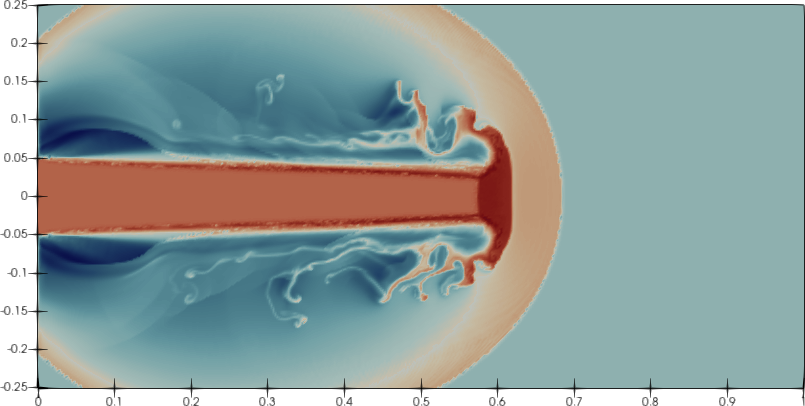} &
\includegraphics[width=0.035\textwidth]{Figures/astrophysical_jet/color_bar.png} \\
$\IQ^4$ scheme ($\Delta{x}=1/480$) &
$\IQ^5$ scheme ($\Delta{x}=1/400$) & 
$\IQ^6$ scheme ($\Delta{x}=1/400$) \\
\end{tabularx}
\caption{Astrophysical jets. The snapshots of the density filed at $T=0.001$. Scales are logarithmic. We can observe that the scheme with higher order spatial accuracy indeed induces less artificial viscosity, despite that the temporal accuracy is at most second order.}
\label{fig:astrophysical_jet_density}
\end{center}
\end{figure}
\begin{figure}[ht!]
\begin{center}
\begin{tabularx}{0.95\linewidth}{@{}c@{~}c@{~}c@{}}
\includegraphics[width=0.3\textwidth]{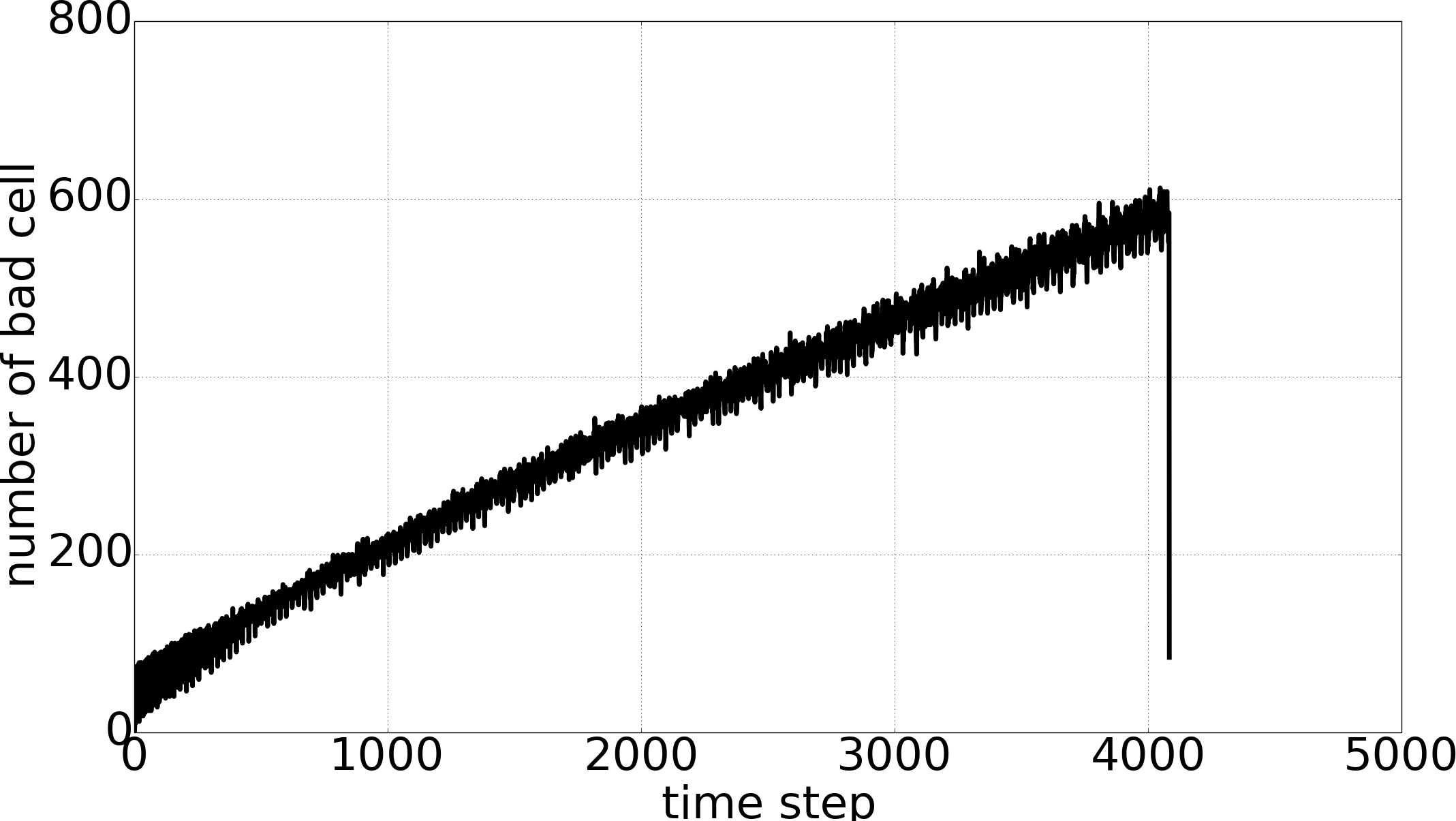} &
\includegraphics[width=0.3\textwidth]{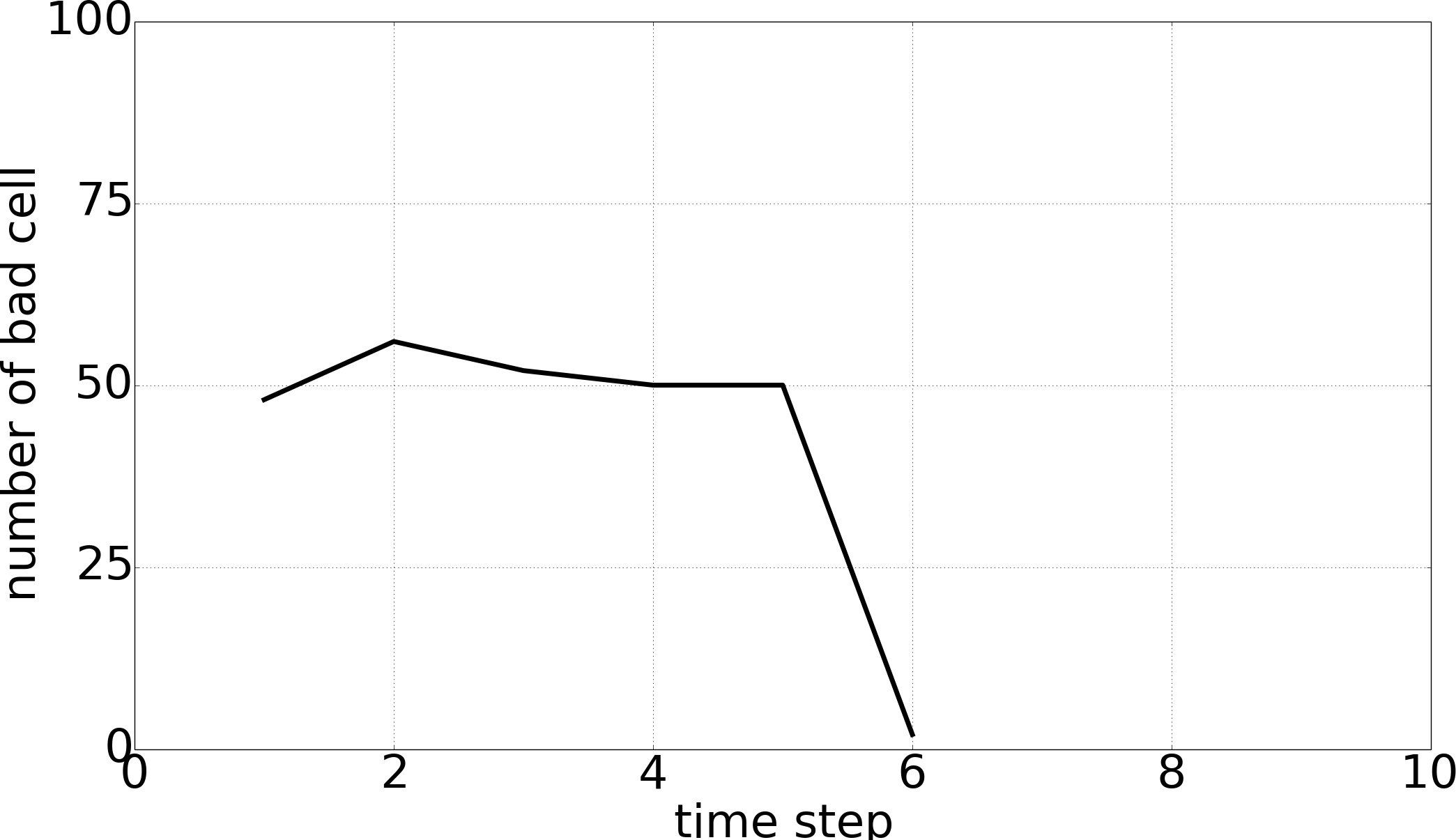} &
\includegraphics[width=0.3\textwidth]{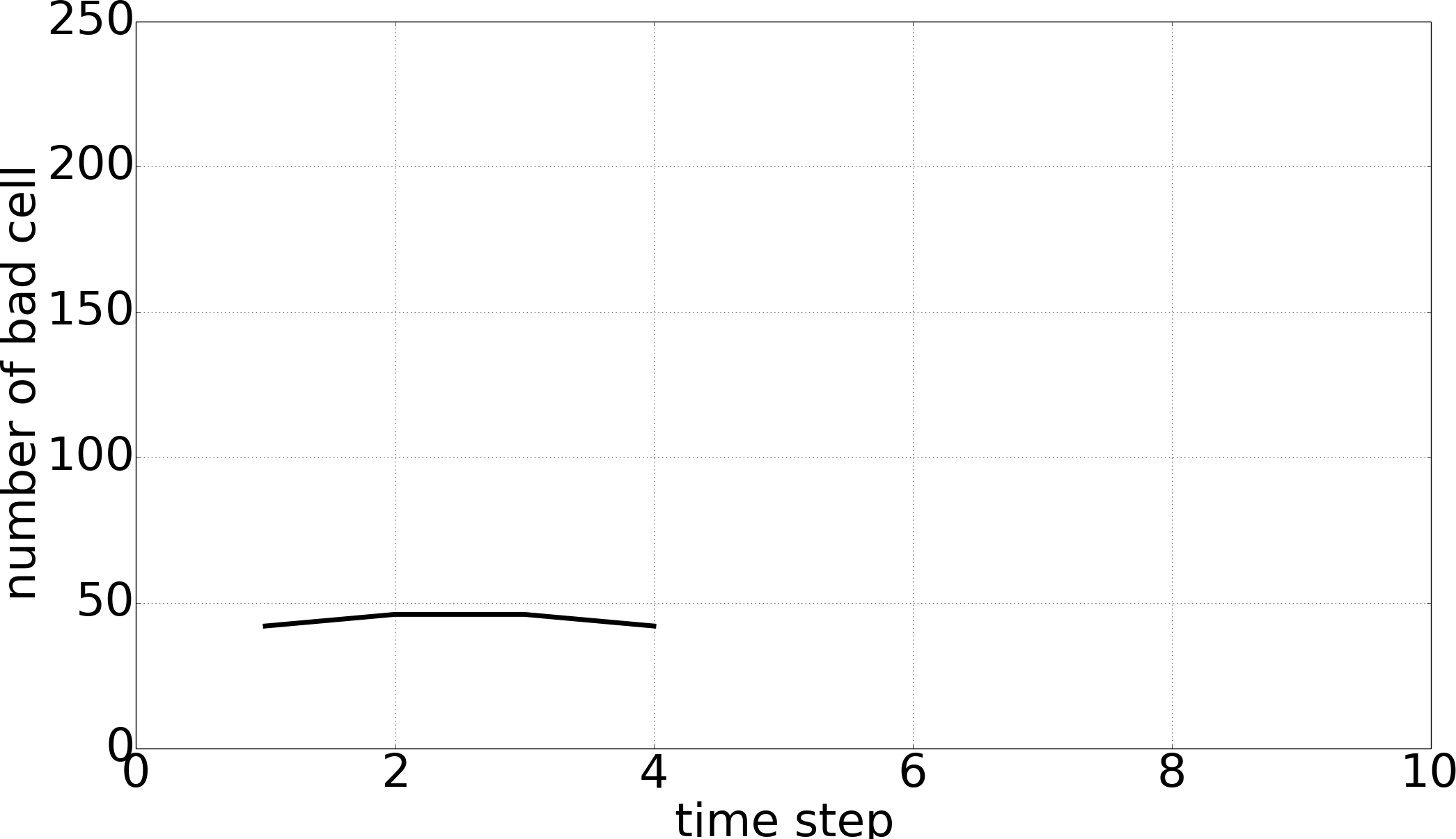} \\
\includegraphics[width=0.3\textwidth]{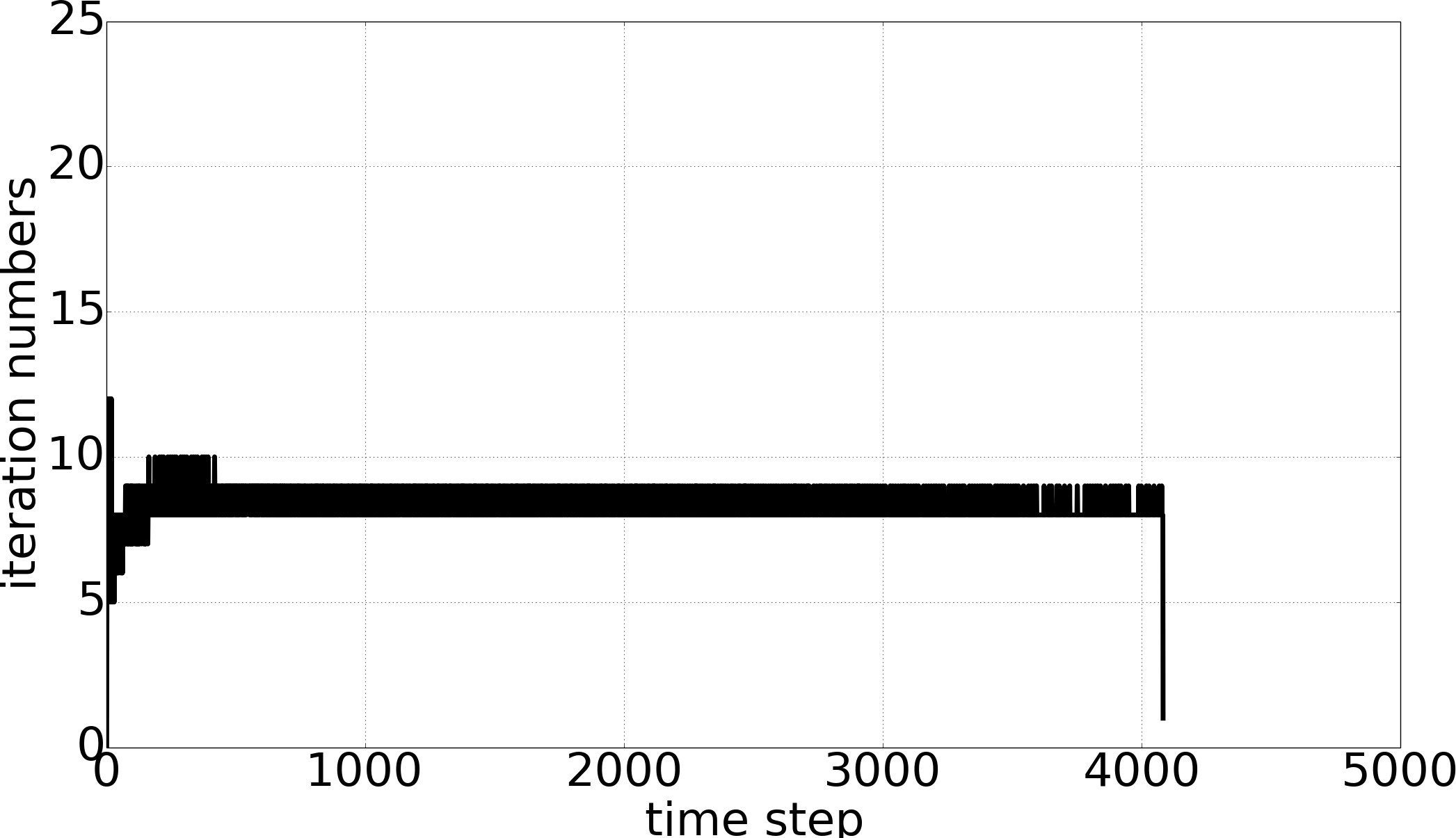} &
\includegraphics[width=0.3\textwidth]{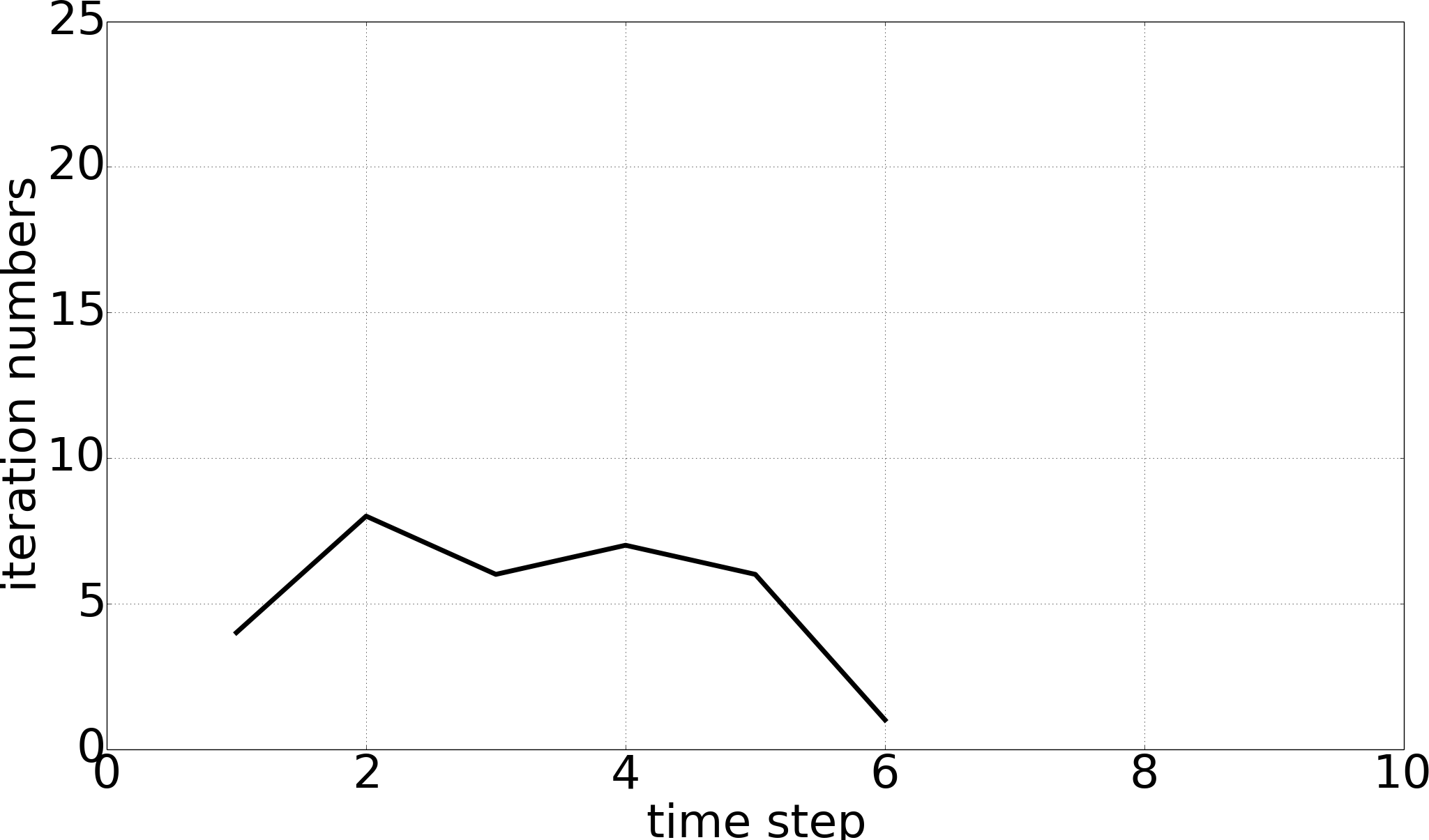} &
\includegraphics[width=0.3\textwidth]{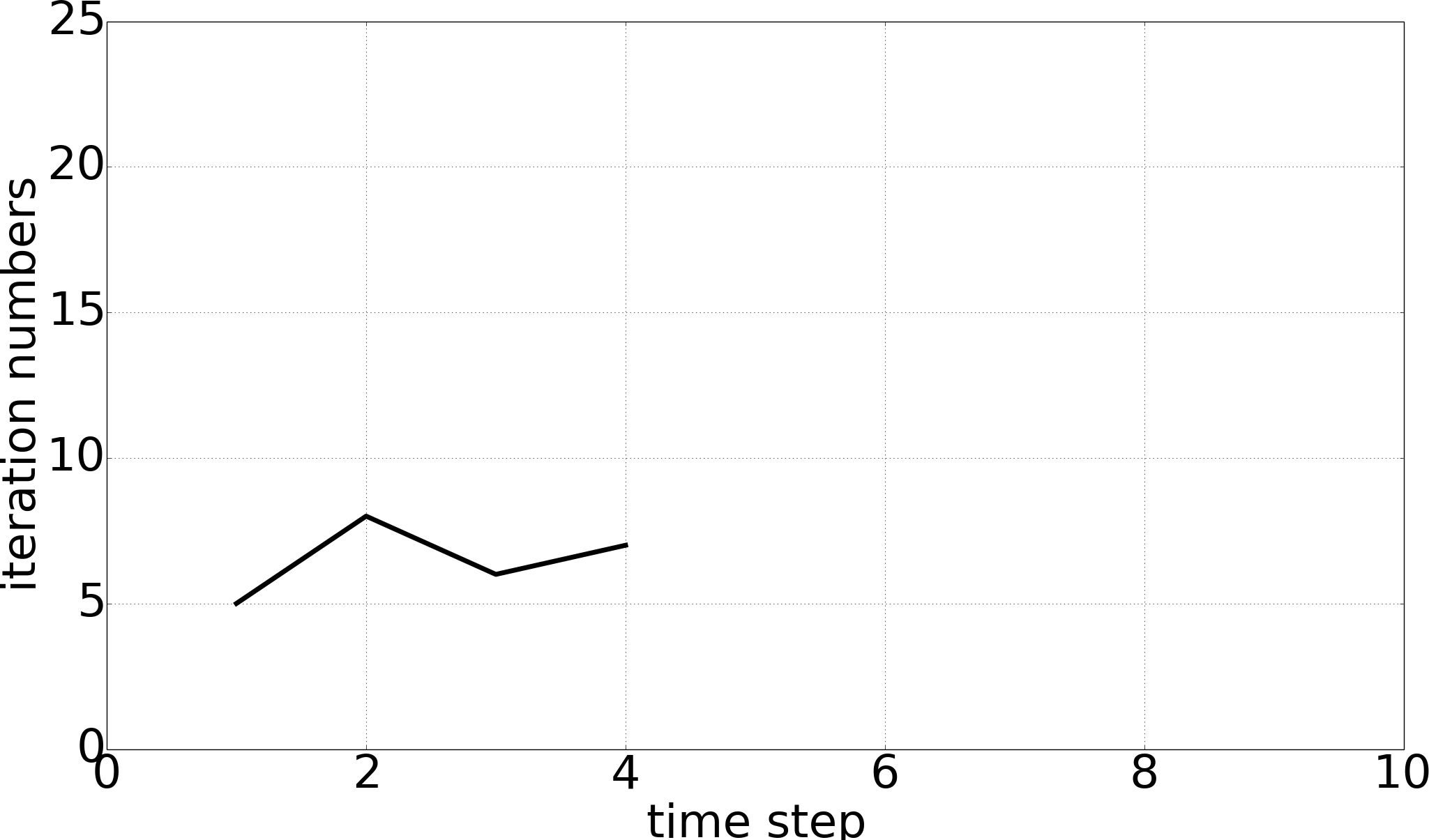} \\
\end{tabularx}
\caption{From left to right $\IQ^2$, $\IQ^4$, $\IQ^6$  DG schemes. Top: the number of bad cells after solving ($\mathrm{P}$) at each time step (the DG polynomial cell averages are not in the admissible set). Bottom: the number of Douglas--Rachford iterations need to reach round-off convergence for solving \eqref{total-energy-opt-2} with \eqref{definition-T-2}. }
\label{fig:astrophysical_jet_DR}
\end{center}
\end{figure}

\section{Concluding remarks}\label{sec-remark}
In this paper, we have constructed a semi-implicit DG scheme that is high order accurate in space, conservative, and positivity-preserving for solving the compressible NS equations. The time step constraint follows the standard hyperbolic CFL condition $\Delta t = \mathcal{O}{(\Delta x)}$. Our scheme is fully decoupled, requiring only the sequential solving of two linear systems at each time step to achieve second order accuracy in time. Conservation and positivity are ensured through a postprocessing of the cell averages of total energy variable. A high order accurate cell average limiter can be formulated as a constraint minimization, which can be efficiently computed by using the generalized Douglas--Rachford splitting method with nearly optimal parameters. Numerical tests suggest that such a simple and efficient postprocessing of the total energy variable indeed renders the semi-implicit high order DG method with Strang splitting much more robust.
Ongoing and future work consists of  extensions from the $\ell^2$-norm minimization postprocessing to the $\ell^1$-norm minimization, and also generalizations to  directly enforcing  the convex invariant domain.

\section*{Acknowledgments}
 Research is supported by NSF DMS-2208515.   

\appendix

\section{The   method of Lagrange multiplier}\label{sec:appendix}
Given a matrix $\vecc{A}=[1,1,\cdots,1]\in\IR^{1\times N}$ and a vector $\vec{w}\in\IR^N$. Define a constant $b = \vecc{A}\vec{w}$ and assume $\sum_{i=1}^N w_i >0$. Let us consider the following constrained minimization problem
\begin{align}\label{eq:opt_model}
\min_{\vec{x}\in\IR^N} \frac{1}{2}\norm{\vec{x} - \vec{w}}{2}^2
~~\text{subject~to}~~
\vecc{A} \vec{x} = b 
~~\text{and}~~
x_i \geq 0 ~~\text{for all}~~ i\in\{1,\cdots,N\}.
\end{align}
Consider the Lagrangian function with multipliers $\lambda_i$ and $\gamma$
\begin{align*}
L = \frac{1}{2}\norm{\vec{x} - \vec{w}}{2}^2 + \gamma \Big(\sum_{i=1}^{N} x_i - b\Big) + \sum_{i=1}^{N} (-\lambda_i x_i),
\end{align*}
and its Karush–Kuhn–Tucker (KKT) conditions, which are given as
\begin{subequations}\label{eq:KKT_condition}
\begin{align}
\frac{\partial L }{\partial x_i} = x_i - w_i + \gamma - \lambda_i &= 0,\label{eq:KKT_1}\\
-\lambda_i x_i &= 0,\label{eq:KKT_2}\\
\lambda_i &\geq 0,\label{eq:KKT_3}\\
-x_i &\leq 0,\label{eq:KKT_4}\\
\sum_{i=1}^{N} x_i &= b.\label{eq:KKT_5}
\end{align}
\end{subequations}
For the constrained minimization problem \eqref{eq:opt_model}, the KKT condition \eqref{eq:KKT_condition} is both sufficient and necessary. 
In the rest of this part, let us assume there exists at least one entry in $\vec{w}$ that is strictly less than $0$. Otherwise, the minimizer of the constraint optimization problem \eqref{eq:opt_model} is $\vec{w}$, which is trivial. 
\begin{lemma}\label{lem:lemma_gamma}
If there exists an entry in $\vec{w}$ less than $0$, then $\gamma\neq0$.
\end{lemma}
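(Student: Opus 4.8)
The plan is to argue by contradiction: assume $\gamma=0$ and show that this forces every entry of $\vec{w}$ to be nonnegative, contradicting the hypothesis that some $w_j<0$. The argument is driven entirely by the KKT system \eqref{eq:KKT_condition}, which is both necessary and sufficient here, so I would simply manipulate those five relations under the extra assumption $\gamma=0$.

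First I would set $\gamma=0$ in the stationarity relation \eqref{eq:KKT_1}, which collapses it to $x_i = w_i + \lambda_i$ for every $i$. I would then split the indices using complementary slackness \eqref{eq:KKT_2}. For indices with $\lambda_i=0$, stationarity gives $x_i=w_i$, and primal feasibility \eqref{eq:KKT_4} then forces $w_i\geq 0$. For indices with $\lambda_i>0$, complementary slackness forces $x_i=0$, so that $w_i=-\lambda_i<0$ by dual feasibility \eqref{eq:KKT_3}. Writing $I_-=\{i:\lambda_i>0\}$, the conclusion of this step is that the strictly negative entries of $\vec{w}$ can live only in $I_-$, and that $x_i=0$ precisely on $I_-$.

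The crux is to bring in the equality constraint \eqref{eq:KKT_5} together with $b=\vecc{A}\vec{w}=\sum_{i=1}^N w_i$. Since $x_i=0$ on $I_-$ and $x_i=w_i$ off $I_-$, summing yields $\sum_{i=1}^N x_i=\sum_{i\notin I_-} w_i$, whereas $b=\sum_{i=1}^N w_i$. Equating these and cancelling the common off-$I_-$ terms gives $\sum_{i\in I_-} w_i=0$. But every summand on $I_-$ is strictly negative, so the only way this sum can vanish is $I_-=\emptyset$. Consequently all $\lambda_i=0$, and therefore $w_i=x_i\geq 0$ for every $i$, contradicting the assumed existence of a strictly negative entry. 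Hence $\gamma\neq 0$.

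I expect the only subtle point to be recognizing that the sign case-analysis \emph{alone} does not produce a contradiction: having $\lambda_i>0$ and $w_i<0$ simultaneously is perfectly consistent with \eqref{eq:KKT_1}--\eqref{eq:KKT_4}, so nothing is ruled out until the conservation constraint $\sum_i x_i=\sum_i w_i$ is invoked to collapse $I_-$ to the empty set. Everything else is routine bookkeeping with the KKT relations, and the standing assumption $\sum_i w_i>0$ is what guarantees the feasible set is nonempty so that a minimizer and its multipliers exist in the first place.
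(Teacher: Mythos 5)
Your proof is correct and follows essentially the same route as the paper's: both set $\gamma=0$ in the stationarity condition, invoke the equality constraint to get $\sum_i x_i=\sum_i w_i=b$, and use $\lambda_i\geq 0$ to force every multiplier to vanish, whence $x_i=w_i\geq 0$ for all $i$, contradicting the existence of a negative entry. The only cosmetic difference is that the paper sums $\lambda_i=x_i-w_i$ over all indices at once rather than first partitioning by complementary slackness, but the underlying computation is identical.
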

\begin{proof}
Assume $\gamma = 0$. Then \eqref{eq:KKT_1} becomes $x_i - w_i - \lambda_i = 0$, namely we have $\lambda_i = x_i - w_i$.
Summing over $i$ from $1$ to $N$, we get
\begin{align*}
\sum_{i=1}^N \lambda_i = \sum_{i=1}^N x_i - \sum_{i=1}^N w_i = b - b = 0.
\end{align*}
Notice \eqref{eq:KKT_3} gives $\lambda_i \geq 0$, we have $\lambda_i = 0$ for all $i$. Thus $x_i = w_i$ for all $i$, which  contradicts the existence of a negative entry in $\vec{w}$.
\end{proof}
\noindent
Let $B = \{j: x_j = 0\}$ denote the set of all indexes, as represented in the minimizer $\vec{x}$ of \eqref{eq:opt_model}, touching the boundary of the feasible region. Let $\tthash B$ be the number of elements in set $B$.
The next lemma shows that if an entry of the vector $\vec{w}$ is less than $0$, then the minimizer plugs that entry back to the boundary of the feasible region. 
\begin{lemma}\label{lem:neg_wi}
Assume there exists at least one entry in vector $\vec{w}$ that is strictly less than $0$.  Then for any index $i$ so that $w_i \leq 0$, we have $i\in B$ and hence $x_i = 0$.
\end{lemma}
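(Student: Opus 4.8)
The plan is to first pin down the \emph{sign} of the multiplier $\gamma$, and then use the stationarity condition \eqref{eq:KKT_1} together with complementary slackness \eqref{eq:KKT_2} to force $x_i = 0$ whenever $w_i \leq 0$.

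First I would sum the stationarity condition \eqref{eq:KKT_1} over $i = 1, \dots, N$. Writing it as $x_i = w_i - \gamma + \lambda_i$ and summing, the constraint \eqref{eq:KKT_5} gives $\sum_i x_i = b$, while by definition $b = \vecc{A}\vec{w} = \sum_i w_i$. Hence the $w_i$ and $x_i$ sums cancel and we are left with $N\gamma = \sum_{i} \lambda_i$. Since each $\lambda_i \geq 0$ by \eqref{eq:KKT_3}, this already forces $\gamma \geq 0$; combined with Lemma~\ref{lem:lemma_gamma}, which rules out $\gamma = 0$ under the standing hypothesis that some entry of $\vec{w}$ is negative, we conclude $\gamma > 0$.

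With $\gamma > 0$ in hand, I would fix any index $i$ with $w_i \leq 0$ and argue by contradiction: suppose $x_i > 0$. Then complementary slackness \eqref{eq:KKT_2} forces $\lambda_i = 0$, so the stationarity condition \eqref{eq:KKT_1} reduces to $x_i = w_i - \gamma$. But $w_i \leq 0$ and $\gamma > 0$ give $x_i = w_i - \gamma < 0$, contradicting both the assumption $x_i > 0$ and feasibility \eqref{eq:KKT_4}. Therefore $x_i = 0$, i.e., $i \in B$, which is exactly the claim.

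I expect the only nontrivial step to be establishing $\gamma > 0$: the summation trick works precisely because the equality constraint is $\sum_i x_i = \sum_i w_i$, so the linear terms telescope and expose the identity $N\gamma = \sum_i \lambda_i$. Once the sign of $\gamma$ is fixed, the contradiction is immediate and requires no further structure of the minimizer. Note that the hypothesis ``at least one $w_i < 0$'' enters only through Lemma~\ref{lem:lemma_gamma}, to exclude the degenerate case $\gamma = 0$; everything else is a direct consequence of the KKT system \eqref{eq:KKT_condition}.
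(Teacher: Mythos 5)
Your proposal is correct and follows essentially the same route as the paper: both establish $\gamma>0$ via the identity $N\gamma=\sum_i\lambda_i$ (obtained by summing the stationarity condition and using $\sum_i x_i=\sum_i w_i=b$) together with Lemma~\ref{lem:lemma_gamma}, and then combine stationarity with complementary slackness. The only cosmetic difference is that the paper directly shows $\lambda_i=x_i-w_i+\gamma>0$ and concludes $x_i=0$ from \eqref{eq:KKT_2}, whereas you argue the contrapositive by assuming $x_i>0$; the two are logically equivalent.
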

\begin{proof}
From \eqref{eq:KKT_2}, we only need to show $\lambda_i > 0$.
By \eqref{eq:KKT_1}, we have $x_i - w_i + \gamma = \lambda_i$. Summing over $i$ from $1$ to $N$, we get
\begin{align*}
\underbrace{\sum_{i=1}^N x_i - \sum_{i=1}^N w_i}_{=~ b-b ~=~ 0} + N\gamma = \sum_{i=1}^N \lambda_i
\quad\Rightarrow\quad
\gamma = \frac{1}{N}\sum_{i=1}^N \lambda_i.
\end{align*}
Thus, by \eqref{eq:KKT_3}, we know $\gamma \geq 0$. Furthermore, by Lemma~\ref{lem:lemma_gamma}, we get $\gamma > 0$. Notice \eqref{eq:KKT_4} gives $x_i \geq 0$. Therefore, under the condition $w_i \leq 0$, the \eqref{eq:KKT_1} implies $\lambda_i = x_i - w_i + \gamma > 0$.
\end{proof}
\begin{lemma}\label{lem:opt_exact_sol_structure}
The solution of the constrained minimization problem \eqref{eq:opt_model} satisfies:
\begin{itemize} 
\item If $x_i = 0$, then we have
\begin{align}\label{eq:B_lambda}
\lambda_i - \frac{1}{N}\sum_{j\in B} \lambda_j = - w_i, \quad \forall i \in B.
\end{align}
\item If $x_i > 0$, then we have
\begin{align}
x_i = w_i + \frac{1}{N - \tthash B}\sum_{j\in B} w_j.
\end{align}
\end{itemize} 
\end{lemma}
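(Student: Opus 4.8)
The plan is to run the argument entirely off the KKT system \eqref{eq:KKT_condition}, splitting the indices into the active set $B=\{j:x_j=0\}$ and its complement, and recycling the identity for the multiplier $\gamma$ that was already derived inside the proof of Lemma~\ref{lem:neg_wi}. The only inputs I need beyond \eqref{eq:KKT_condition} are complementary slackness \eqref{eq:KKT_2} and the equality constraint \eqref{eq:KKT_5}; everything else is linear rearrangement.

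First I would record the key consequence of complementary slackness: for any $i\notin B$ we have $x_i>0$, so \eqref{eq:KKT_2} forces $\lambda_i=0$. Hence $\sum_{i=1}^N\lambda_i=\sum_{j\in B}\lambda_j$, and the relation $\gamma=\frac{1}{N}\sum_{i=1}^N\lambda_i$ obtained in the proof of Lemma~\ref{lem:neg_wi} simplifies to $\gamma=\frac{1}{N}\sum_{j\in B}\lambda_j$. For the first bullet, take any $i\in B$, so $x_i=0$; substituting into the stationarity condition \eqref{eq:KKT_1} gives $\lambda_i=\gamma-w_i$, and replacing $\gamma$ by $\frac{1}{N}\sum_{j\in B}\lambda_j$ and rearranging yields exactly $\lambda_i-\frac{1}{N}\sum_{j\in B}\lambda_j=-w_i$, which is \eqref{eq:B_lambda}.

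For the second bullet, take any $i\notin B$, so that $\lambda_i=0$; then \eqref{eq:KKT_1} gives $x_i=w_i-\gamma$. The remaining task is to express $\gamma$ through $\vec{w}$ alone, and this is the one place where the global equality constraint is used rather than a single stationarity equation. Summing \eqref{eq:KKT_5}, discarding the $B$-terms since $x_j=0$ there, and inserting $x_i=w_i-\gamma$ on the complement gives $\sum_{i\notin B}(w_i-\gamma)=\sum_{i=1}^N w_i$, that is, $-(N-\tthash B)\gamma=\sum_{j\in B}w_j$, so $\gamma=-\frac{1}{N-\tthash B}\sum_{j\in B}w_j$. Substituting back produces $x_i=w_i+\frac{1}{N-\tthash B}\sum_{j\in B}w_j$, as claimed.

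I expect no genuine obstacle here, since the whole argument is bookkeeping on the KKT relations once Lemmas~\ref{lem:lemma_gamma} and~\ref{lem:neg_wi} are in hand. The only step requiring care is the elimination of $\gamma$ in the second bullet: one must sum the constraint \eqref{eq:KKT_5} \emph{globally} over all indices and drop the active-set contributions, rather than read $\gamma$ off any per-index stationarity identity, and getting the counting factor $N-\tthash B$ correct is the sole spot where an off-by-one error could slip in.
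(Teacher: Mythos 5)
Your proposal is correct and follows essentially the same KKT bookkeeping as the paper: the first bullet is derived identically, and the second differs only in how $\gamma$ is eliminated (you sum the equality constraint \eqref{eq:KKT_5} directly to get $\gamma=-\frac{1}{N-\tthash B}\sum_{j\in B}w_j$, whereas the paper first sums \eqref{eq:B_lambda} over $B$ to obtain $\sum_{j\in B}\lambda_j$ and then uses $\gamma=\frac{1}{N}\sum_{j\in B}\lambda_j$). Both eliminations are equivalent, and your counting factor $N-\tthash B$ is right.
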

\begin{proof}
From \eqref{eq:KKT_1}, we have $\gamma = \lambda_i + w_i - x_i$. Summing over $i$ from $1$ to $N$, we get
\begin{align*}
N\gamma = \sum_{i=1}^N \lambda_i + \underbrace{\sum_{i=1}^N w_i - \sum_{i=1}^N x_i}_{=~ b-b ~=~ 0}
= \sum_{i\in B} \lambda_i + \sum_{i\notin B} \lambda_i.
\end{align*}
Recall that the set $B = \{j:x_j = 0\}$. By \eqref{eq:KKT_4}, $i\notin B$ gives $x_i>0$. By \eqref{eq:KKT_2}, we have $\lambda_i = 0$ for all $i\notin B$. Thus, we get
\begin{align}\label{eq:compute_gamma}
\gamma = \frac{1}{N}\sum_{i\in B} \lambda_i.
\end{align}
If $x_i = 0$, then $i\in B$ and \eqref{eq:KKT_1} becomes $\lambda_i - \gamma = - w_i$, so replacing $\gamma$ with \eqref{eq:compute_gamma}, we obtain \eqref{eq:B_lambda}.
Summing over $i\in B$ of \eqref{eq:B_lambda}, we have
\begin{align*}
\sum_{i\in B} \lambda_i - \frac{\tthash B}{N}\sum_{j\in B} \lambda_j = -\sum_{i\in B} w_i
\quad\Rightarrow\quad 
\sum_{j\in B} \lambda_j = -\frac{N}{N - \tthash B}\sum_{j\in B} w_j.
\end{align*}
If $x_i > 0$, then by \eqref{eq:KKT_2} we have $\lambda_i = 0$. Again, \eqref{eq:KKT_1} and \eqref{eq:compute_gamma} gives 
\begin{align*}
x_i = w_i - \gamma 
= w_i - \frac{1}{N}\sum_{j\in B} \lambda_j 
= w_i + \frac{1}{N - \tthash B}\sum_{j\in B} w_j.
\end{align*}
Therefore, we conclude the proof.
\end{proof}
\begin{lemma}\label{lem:compare_w}
If $w_{i_1} \geq w_{i_2} > 0$, then $x_{i_1} = 0$ implies $x_{i_2} = 0$, namely $i_1\in B$ implies $i_2\in B$. 
\end{lemma}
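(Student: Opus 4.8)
The plan is to argue by contradiction, using the multiplier $\gamma$ as a threshold that separates the clipped indices (those in $B$) from the active ones (those with $x_i>0$), ordered by the size of the entries $w_i$. The mechanism is already latent in the KKT system \eqref{eq:KKT_condition} together with the formula \eqref{eq:compute_gamma}, so the argument is short.

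First I would record two one-sided inequalities characterizing membership in $B$ through $\gamma$. For any index $i\in B$ we have $x_i=0$, so the stationarity condition \eqref{eq:KKT_1} reduces to $\lambda_i=\gamma-w_i$; dual feasibility \eqref{eq:KKT_3} then forces $w_i\le\gamma$. Conversely, for any index $i\notin B$ we have $x_i>0$, so complementary slackness \eqref{eq:KKT_2} gives $\lambda_i=0$, and \eqref{eq:KKT_1} becomes $x_i=w_i-\gamma$; since $x_i>0$ this yields $w_i>\gamma$. In words, $w_i\le\gamma$ whenever $i\in B$ and $w_i>\gamma$ whenever $i\notin B$.

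Now suppose, toward a contradiction, that $i_1\in B$ but $i_2\notin B$, i.e.\ $x_{i_1}=0$ while $x_{i_2}>0$. The first inequality applied to $i_1$ gives $w_{i_1}\le\gamma$, and the second applied to $i_2$ gives $w_{i_2}>\gamma$. Chaining these, $w_{i_2}>\gamma\ge w_{i_1}$, so $w_{i_2}>w_{i_1}$, which contradicts the hypothesis $w_{i_1}\ge w_{i_2}$. Hence $i_1\in B$ forces $i_2\in B$, that is, $x_{i_1}=0$ implies $x_{i_2}=0$.

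There is no genuine obstacle here: the only point requiring a little care is the boundary case $w_{i_1}=\gamma$, but the strict inequality on the active side ($x_i>0\Rightarrow w_i>\gamma$) keeps the separation clean, so even $w_{i_1}=\gamma$ places $i_1$ consistently in $B$ and cannot coexist with $i_2\notin B$ when $w_{i_1}\ge w_{i_2}$. The positivity hypothesis $w_{i_2}>0$ is not actually used in this monotonicity step; it serves only to locate the threshold relative to zero, which was the content of Lemma~\ref{lem:neg_wi}.
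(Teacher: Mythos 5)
Your proof is correct, but it takes a genuinely different route from the paper. You extract from the KKT system a clean threshold characterization: stationarity \eqref{eq:KKT_1} plus dual feasibility \eqref{eq:KKT_3} give $w_i\le\gamma$ for every $i\in B$, while complementary slackness \eqref{eq:KKT_2} plus $x_i>0$ give $w_i>\gamma$ for every $i\notin B$, and the claim follows by chaining the two inequalities. The paper instead argues by a direct exchange construction: assuming $x_{i_1}=0$ and $x_{i_2}>0$, it builds an explicit competitor $\tilde{\vec{x}}$ (swapping the two coordinates when $w_{i_1}>w_{i_2}$, averaging them when $w_{i_1}=w_{i_2}$) that remains feasible and strictly decreases the objective, contradicting uniqueness of the minimizer. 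Your argument is shorter, handles both cases $w_{i_1}>w_{i_2}$ and $w_{i_1}=w_{i_2}$ uniformly, and makes visible the underlying structure $B=\{i: w_i\le\gamma\}$ that the paper's algorithm implicitly exploits; it is also more consistent with the KKT-based proofs of the preceding lemmas in the appendix. What the paper's exchange argument buys in return is independence from the existence of KKT multipliers: it uses only feasibility and strict convexity of the objective, so it would survive even if one did not want to invoke (or verify) the necessity of the KKT conditions. Your closing observation that $w_{i_2}>0$ is not needed for this step is also accurate; that hypothesis only matters for locating the threshold relative to zero, which is the content of Lemma~\ref{lem:neg_wi}.
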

\begin{proof}
Let us first deal with the case $w_{i_1} > w_{i_2}$.
If the vector $\vec{x}$ is a solution of the minimization problem \eqref{eq:opt_model} with $x_{i_1} = 0$ and $x_{i_2} > 0$, then we will construct a solution vector $\tilde{\vec{x}}$ such that $\tilde{x}_i = x_i$ for all $i\notin\{i_1,i_2\}$ and $\tilde{x}_{i_1} = x_{i_2}$ and $\tilde{x}_{i_2} = 0$.
\begin{itemize}
\item \emph{Check constraint}: since $\tilde{x}_i = x_i$ for all $i\notin\{i_1,i_2\}$, we only need to check $\tilde{x}_{i_1} + \tilde{x}_{i_2} = x_{i_1} + x_{i_2}$. This holds since $\tilde{x}_{i_1} + \tilde{x}_{i_2} = x_{i_2} + 0$ and $x_{i_1} + x_{i_2} = 0 + x_{i_2}$.
\item \emph{Compare $2$-norm}: we have $(w_{i_1} - x_{i_1})^2 + (w_{i_2} - x_{i_2})^2 > (w_{i_1} - \tilde{x}_{i_1})^2 + (w_{i_2} - \tilde{x}_{i_2})^2$, which can be easily verified as follows
\begin{align*}
&& (w_{i_1} - x_{i_1})^2 + (w_{i_2} - x_{i_2})^2 &> (w_{i_1} - \tilde{x}_{i_1})^2 + (w_{i_2} - \tilde{x}_{i_2})^2\\
\Leftrightarrow&&  
w_{i_1}^2 + (w_{i_2} - x_{i_2})^2 &> (w_{i_1} - x_{i_2})^2 + w_{i_2}^2\\ 
\Leftrightarrow&&
w_{i_1}^2 + w_{i_2}^2 - 2w_{i_2}x_{i_2} + x_{i_2}^2 &> w_{i_1}^2 - 2w_{i_1}x_{i_2} + x_{i_2}^2 + w_{i_2}^2 \\ 
\Leftrightarrow&&
(w_{i_1} - w_{i_2})x_{i_2} &> 0,
\end{align*}
which holds when $w_{i_1} > w_{i_2}$ and $x_{i_2} > 0$.
\end{itemize}
Hence we have constructed a vector $\tilde{\vec{x}}$ that satisfies the constraint but has smaller objective value, which contradicts that $\vec{x}$ is the unique minimizer of \eqref{eq:opt_model}.
\par
In case of $w_{i_1} = w_{i_2}$, we use contradiction argument to show the vector $\vec{x}$ with $x_{i_1} = 0$ and $x_{i_2} > 0$ is not a solution of the minimization problem \eqref{eq:opt_model}.
We construct a vector $\tilde{\vec{x}}$ such that $\tilde{x}_i = x_i$ for all $i\notin\{i_1,i_2\}$ and $\tilde{x}_{i_1} = \frac{1}{2}x_{i_2}$ and $\tilde{x}_{i_2} = \frac{1}{2}x_{i_2}$.
\begin{itemize}
\item \emph{Check constraint}: since $\tilde{x}_i = x_i$ for all $i\notin\{i_1,i_2\}$, we only need to check $\tilde{x}_{i_1} + \tilde{x}_{i_2} = x_{i_1} + x_{i_2}$. This holds since $\tilde{x}_{i_1} + \tilde{x}_{i_2} = x_{i_2}$ and $x_{i_1} + x_{i_2} = x_{i_2}$.
\item \emph{Compare $2$-norm}: we have $(w_{i_1} - x_{i_1})^2 + (w_{i_2} - x_{i_2})^2 > (w_{i_1} - \tilde{x}_{i_1})^2 + (w_{i_2} - \tilde{x}_{i_2})^2$, which can be easily verified as follows
\begin{align*}
&&(w_{i_1} - x_{i_1})^2 + (w_{i_2} - x_{i_2})^2 &> (w_{i_1} - \tilde{x}_{i_1})^2 + (w_{i_2} - \tilde{x}_{i_2})^2\\
\Leftrightarrow&&
w_{i_1}^2 + (w_{i_2} - x_{i_2})^2 &> (w_{i_1} - \frac{1}{2}x_{i_2})^2 + (w_{i_2} - \frac{1}{2}x_{i_2})^2\\ 
\Leftrightarrow&&
w_{i_1}^2 - (w_{i_1} - \frac{1}{2}x_{i_2})^2 &> (w_{i_2} - \frac{1}{2}x_{i_2})^2 - (w_{i_2} - x_{i_2})^2\\ 
\Leftrightarrow&&
x_{i_2} (2w_{i_1} - \frac{1}{2}x_{i_2}) &> x_{i_2} (2w_{i_2} - \frac{3}{2}x_{i_2})\\ 
\quad\Leftrightarrow&&
w_{i_1} - w_{i_2} &> -\frac{1}{2} x_{i_2} 
\end{align*}
This hold when $w_{i_1} = w_{i_2}$ and $x_{i_2} > 0$.
\end{itemize}
The proof is now concluded.
\end{proof}
\par
The Lemma~\ref{lem:neg_wi} indicates the following: if the $i$-th entry of the vector $\vec{w}$ is non-positive, $w_i\leq0$, then we need to set $x_i = 0$.
Lemma~\ref{lem:opt_exact_sol_structure} gives the structure of the exact solution to the minimization problem \eqref{eq:opt_model}. 
Lemma~\ref{lem:compare_w} helps us to construct the following algorithm to find the set $B$ and obtain the solution to \eqref{eq:opt_model}. 
\begin{itemize}
\item Step~1. If $w_i\leq0$, then set $x_i = 0$ and push $i$ in set $B$.
\item Step~2. \textbf{Sort all entries} $w_i>0$ in $\vec{w}$ in ascending order.
\item Step~3. Compute the ``total out-of-bound mass'' of set $B$ by the following formula:
\begin{align*}
-\sum_{j\in B} w_j.
\end{align*}
\item Step~4. Check whether the smallest $w_{i_s}$, where $i_s\notin B$, satisfies
\begin{align}\label{eq:check_smallest_entry}
w_{i_s} + \frac{1}{N - \tthash B}\sum_{j\in B} w_j > 0.
\end{align}
If \eqref{eq:check_smallest_entry} holds, then uniformly allocate the ``total out-of-bound mass'' of set $B$ to all other ``good entries'' by formula
\begin{align*}
x_i = w_{i} + \frac{1}{N - \tthash B}\sum_{j\in B} w_j \quad\text{for all}~i\notin B.
\end{align*}
Otherwise, push $i_s$ into set $B$ and go to Step~3. Note: if there are multiple entries with the same smallest value, then push all of them into set $B$.
\end{itemize}
The complexity of sorting algorithm $\mathtt{std::sort()}$ in C$++$ is $\mathcal{O}(N\log(N))$ on the best and average case scenarios. Additionally, sophisticated coding skills are required for implementing sorting algorithms on a distributed memory system.
In comparison, the complexity of the DR algorithm is $\mathcal{O}(N)$ and DR is very amenable to parallelization.

\paragraph{\bf Comparison of optimization algorithms}
We create synthetic data to let $\vec w$ in \eqref{eq:opt_model} be defined as point values of the following function on a uniform grid of size $1000^2$ on the domain $[0,1]^2$:
\[ f(x,y) = \begin{cases}
-0.5, & -\frac{\delta}{4}+0.25\leq x\leq \frac{\delta}{4}+0.25 \\
-0.5, & -\frac{\delta}{4}+0.75\leq x\leq \frac{\delta}{4}+0.75 \\
    \cos^8{(2\pi x)} + 10^{-13}, &\mbox{otherwise}
\end{cases},\]
 where $\delta>0$ is a parameter. A different value of $\delta$ gives a different ratio of negative point values, and we consider values of $\delta$ such that the ratio of negative point values is $1\%, 2\%, 5\%, 10\%$ and $20\%$.

\par
We then solve \eqref{eq:opt_model} with $b = \vecc{A}\vec{w}$ by both the   method of Lagrange multiplier and the Douglas--Rachford method. 
For each optimization method, we solve \eqref{eq:opt_model} to machine precision $100$ times and compare the average CPU time for solving it once on a single   Intel Xeon CPU E5-2660 v3 $2.60$GHz. 
The Table~\ref{tab:compare_opt_algorithm} shows the computational time of finding the minimizer up to machine precision.
\par
The time cost of the DR algorithm increases as the ratio of negative points increases, which  is however still faster than the  Lagrange multiplier approach for large data set, due to the $\mathcal{O}(N\log(N))$ sorting operation. 
Notice that as the number of negative points increases, the data set requiring sorting becomes smaller, resulting in a decrease in the time cost of the   Lagrange multiplier method. However, in large-scale simulations with a good base scheme such as a proper DG scheme in this paper, the percentage of negative points is typically small. 
Such a comparison suggests that the DR algorithm is a preferable option from the efficiency perspective.
\begin{table}[ht!]
\centering
\begin{tabularx}{0.75\linewidth}{@{~}C@{~}|C@{~}|C@{~}|C@{~}|C@{~}|C@{~}}
\toprule
   bad cells \% & $1\%$ & $2\%$ & $5\%$ & $10\%$ & $20\%$ \\
\midrule
LM & $1.426\,\mathrm{s}$ & $1.500\,\mathrm{s}$ & $1.509\,\mathrm{s}$ & $1.418\,\mathrm{s}$ & $1.130\,\mathrm{s}$\\
DR & $0.378\,\mathrm{s}$ & $0.467\,\mathrm{s}$ & $0.565\,\mathrm{s}$ & $0.656\,\mathrm{s}$ & $0.846\,\mathrm{s}$\\
\bottomrule
\end{tabularx}
\caption{The CPU time for applying the method of Lagrange multiplier and the DR algorithm to solve the minimization \eqref{eq:opt_model} for a problem of size $10^6$ for problems with different ratios of negative points (bad cells). The time unit is second. The ``LM'' refers to the method of Lagrange multiplier and the ``DR'' refers to the Douglas--Rachford splitting algorithm.}
\label{tab:compare_opt_algorithm}
\end{table}

\bibliographystyle{elsarticle-num}
\bibliography{bibliography} 

\end{document}